\newtheorem{lemma}{Lemma}[section]
\newtheorem{theorem}[lemma]{Theorem}
\newtheorem*{theorem*}{Theorem}
\newtheorem{definition}[lemma]{Definition}
\newtheorem{proposition}[lemma]{Proposition}
\newtheorem*{proposition*}{Proposition}
\newtheorem{corollary}[lemma]{Corollary}
\newtheorem{question}{Question}
\newtheorem{result}{Theorem}
\DeclareMathOperator{\alt}{Alt}
\DeclareMathOperator{\sym}{Sym}
\newcommand{\RR}{\mathbb{R}}
\newcommand{\ZZ}{\mathbb{Z}}
\newcommand{\NN}{\mathbb{N}}
\newcommand{\inv}{^{-1}}
\newcommand{\comp}{\circ}
\newcommand{\abs}[1]{\left|#1\right|}
\newcommand{\norm}[1]{\left|\left|#1\right|\right|}
\newcommand{\set}[1]{\left\{#1\right\}}
\newcommand{\normal}{\vartriangleleft}
\newcommand{\conj}{\mathrm{Conj}}
\newcommand{\rf}{\mathrm{Rf}}
\newcommand{\betha}{\beta}
\title{Arbitrary residual finiteness and conjugacy separability growth}
\author{Lukas Vandeputte\footnote{The author  kindly acknowledges the support by the group of science Engineering and Technology at KU Leuven Campus Kulak.}}
\begin{document}
	\maketitle
\begin{abstract}

	In a recent paper, Henry Bradford showed that all sufficiently fast growing functions appear as the residual finiteness growth function of some group. In this paper we show that the groups there constructed are conjugacy separable and that their conjugacy separability growth is equal to the residual finiteness growth. It follows that all sufficiently fast growing functions appear as the conjugacy separability growth function of some group. We extend this construction to a new class of groups such that given functions $f_1,f_2$ under the same constraints and satisfying $f_2\geq f_1$, we can find a group such that the residual finiteness growth is given by $f_1$ and the conjugacy separability growth by $f_2$, showing that the residual finiteness growth and conjugacy separability growth behave independently and can lie arbitrarily far apart.
\end{abstract}

	\section{Introduction}
	\sloppy
	In recent years there is an effort in measuring how well we can recognise properties of certain group elements in finite quotients. For this, Bou-Rabee introduced the residual finiteness growth function: $\rf_G^S:\NN\rightarrow\NN\cup\{\infty\}$, which, given a group $G$ with finite generating set $S$, maps an integer $n$ to the minimal integer $m$ (if it exists) such that for any non-trivial group element $g\in G$ of word norm $\norm{g}_S\leq n$, then there exists a finite quotient $\varphi:G\rightarrow Q$ satisfying $\abs{Q}\leq m$ and $\varphi(g)\neq 1$. Similarly \cite{Lawton2017decision} introduces the conjugacy separability growth function $\conj_G^S:\NN\rightarrow\NN\cup\{\infty\}$, which maps an integer $n$ to the minimal integer $m$ such that for $g_1,g_2\in G$ two non-conjugate elements satisfying $\norm{g_1}_S,\norm{g_2}_S\leq n$, there exists a finite quotient $\varphi:G\rightarrow Q$ such that $\abs{Q}\leq m$ and  such that $\varphi(g_1)$ and $\varphi(g_2)$ are non-conjugate.
	These functions only depend on the generating set up to a bounded factor in the argument. If two functions $f$ and $g$ only differ by such a factor in the argument, then we denote $f\simeq g$.
	If the group $G$ above is residually finite or conjugacy separable, then the function $\rf_G$ and $\conj_G$ respectively only take finite values. If a group $G$ is conjugacy separable, then it is also residually finite and we have $\conj_G^S\geq\rf_G^S$.
	
	A first way of studying these functions is by fixing a class of groups, and determining the behaviour of $\rf_G$ or $\conj_G$ for groups $G$ in that class, see \cite{dere2022survey} for a recent survey. We can however also pose the inverse question: For which functions $f$ can we find a group $G$ such that $\rf_G$ or $\conj_G\simeq f$. The first result in this manner was by Bou-Rabee and Steward \cite{Bou-Rabee} which demonstrates that we can have arbitrarily large residual finiteness growth. The groups they use are an adaptation of B.H. Neumann's continuous family of $2$-generator groups constructed in \cite{neumann1937some}. The authors however left ambiguous if these groups were in fact conjugacy separable and it thus remained unclear whether there exist groups of arbitrarily large conjugacy separability growth function. 
	
	A second result in this regard by Kharlampovich, Myasnikov and Sapir \cite{Kharlampovich}, demonstrates that the groups above can be chosen to be $3$-step solvable, and can have word-problem of a predefined complexity.
	
	More recently, it was shown by Bradford \cite{Bradford} that the class of functions appearing as $\rf_G$ is not only unbounded, but any function that grows sufficiently fast can appear (up to $\simeq$) as a residual finiteness growth function.
	More precisely, if $F:\NN\rightarrow\NN$ is a non-decreasing functions such that
	\begin{enumerate}[label=(\Alph*)]
		\item There exist $C,\epsilon>0$ such that for all $n\in \NN$,$$
		F(n)\geq \exp\left(Cn\log(n)^2\log\log(n)^{1+\epsilon}\right)\label{eq:globalgrowth}
		$$
		\item There exist $C_1,C_2>1$ such that for all $n\in\NN$,$$
		F(n)^{C_1}\leq F(C_2n)\label{eq:localgrowth}
		$$
	\end{enumerate}
	then there exists some group $G$ such that $\rf_G(n)\simeq F(n)$.
	
	The previous results demonstrate the spectrum of $\rf$ is relatively well understood (at least for large functions), however the literature is quiet about the spectrum of the conjugacy separability growth function.
	
	In this article we first demonstrate that the groups constructed in \cite{Bradford} are conjugacy separable, and that their conjugacy separability growth function is the same as their residual finiteness growth function. We thus obtain the following:	
	
	\begin{result}
		Let $F : \NN\rightarrow\NN$ be a non-decreasing function satisfying the conditions \ref{eq:globalgrowth} and \ref{eq:localgrowth}
		
		Then there exists a conjugacy separable group $G$, generated by a finite set $S$ such that  \begin{equation*}
		F(n)\simeq \conj_{G}^S(n).
		\end{equation*}
	\end{result}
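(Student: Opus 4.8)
The strategy is to reduce the theorem to two facts about the group $G$ constructed in~\cite{Bradford}: that it is conjugacy separable, and that two non-conjugate elements of norm at most $n$ can be separated in a finite quotient of order at most $F(Cn)^{C}$ for some constant $C$. Conjugacy separability gives $\conj_G^S\ge\rf_G^S\simeq F$ as recalled in the introduction, so only the upper bound needs new work; and condition~\ref{eq:localgrowth}, iterated, absorbs any fixed power together with any linear rescaling of the argument (in particular a change of generating set), so a bound of the form $F(Cn)^{C}$ already yields $\conj_G^S(n)\simeq F(n)$. Hence the real task is to understand \emph{conjugacy} in $G$, and not merely non-triviality, well enough to control it quantitatively.

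Recall the relevant structure. The group $G$ is finitely generated and carries a distinguished descending chain of finite-index normal subgroups $G=N_0\ge N_1\ge N_2\ge\cdots$ with $\bigcap_k N_k=\{1\}$; the finite quotients $G/N_k$ are assembled layer by layer from finite groups, which are in particular conjugacy separable; and the residual finiteness growth is carried by this chain, in the sense that a non-trivial element of norm $\le n$ already survives in $G/N_{k(n)}$ with $\abs{G/N_{k(n)}}\simeq F(n)$. The single additional property I would extract from the construction is \textbf{conjugacy locality}: there is a linear function $\ell$ such that for all $g_1,g_2\in G$ with $\norm{g_1}_S,\norm{g_2}_S\le n$, the elements $g_1$ and $g_2$ are conjugate in $G$ if and only if they are conjugate in $G/N_{\ell(n)}$; equivalently, a conjugator, when one exists, can be found in a ball of radius depending only on $n$.

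Granting this, both halves follow quickly. If $g_1\not\sim_G g_2$, then taking $n\ge\norm{g_1}_S,\norm{g_2}_S$, conjugacy locality forces $g_1$ and $g_2$ to have non-conjugate images in the finite quotient $G/N_{\ell(n)}$; hence $G$ is conjugacy separable. Quantitatively, that same quotient separates the conjugacy classes of every non-conjugate pair of norm $\le n$ and has order $\simeq F(\ell(n))\simeq F(n)$ by~\ref{eq:localgrowth}; if the bookkeeping on centralisers in the building blocks only yields the weaker bound $F(Cn)^{C}$, the same condition again collapses it. Together with $\conj_G^S\ge\rf_G^S\simeq F$ this pins down $\conj_G^S(n)\simeq F(n)$.

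The crux --- and the one place where one must work with the internal anatomy of Bradford's groups rather than with soft arguments --- is conjugacy locality: bounding, in terms of $\norm{g_1}_S$ and $\norm{g_2}_S$ alone, how far down the chain non-conjugacy becomes visible, equivalently how long a conjugator can need to be. I would argue by induction on $k$: a conjugator for $g_1$ and $g_2$ in $G/N_k$ should be correctable, within a controlled finite centraliser coming from the $k$-th layer, to a conjugator in $G/N_{k+1}$, and once $k$ exceeds the ``depth'' to which $g_1$ and $g_2$ reach the available corrections become trivial and the induction terminates at $k=\ell(n)$. The ``independence'' designed into consecutive layers of the construction --- the feature that makes the residual finiteness growth computable in the first place --- is exactly what keeps these corrections from accumulating, and it is simultaneously the engine behind conjugacy separability. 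Everything else is formal, given~\ref{eq:localgrowth}.
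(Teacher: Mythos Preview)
Your high-level strategy---isolate a ``conjugacy locality'' statement and absorb constants via condition~\ref{eq:localgrowth}---is the shape of the paper's argument, but the structural picture you assume for $G$ is wrong, and the gap this creates is genuine.

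Bradford's group is not filtered by a single chain $G=N_0\ge N_1\ge\cdots$ with $\abs{G/N_k}\simeq F(k)$. It sits inside $\bigl(\prod_m\alt(d(m))\bigr)\times(\ZZ_3\wr\ZZ)$ and carries independent projections $\pi_m:G\to\alt(d(m))$ and $\pi_\infty:G\to\ZZ_3\wr\ZZ$. If one sets $N_k=\bigcap_{m\le k}\ker\pi_m$ then $G/N_k\cong\prod_{m\le k}\alt(d(m))$, whose order is a product of $k$ factorials, not $\simeq F(k)$; the residual finiteness bound comes from a \emph{single} factor $\alt(d(m))$. More seriously, your conjugacy locality---non-conjugacy in $G$ visible already in $G/N_{\ell(n)}$---can simply fail for this chain. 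Since $G/N_k$ is a direct product, conjugacy there is componentwise, so your claim amounts to $\pi_m(g_1)\not\sim\pi_m(g_2)$ for some $m\le\ell(n)$. Take $g_1=\alpha_*$ and $g_2=\alpha_*^{-1}$: both project to $d(m)$-cycles, and if (as the construction permits) every $d(m)\equiv1\pmod4$ then the conjugator $x\mapsto-x$ is even, so $\pi_m(g_1)\sim\pi_m(g_2)$ in $\alt(d(m))$ for \emph{all} $m$; yet $\pi_\infty(g_1)$ and $\pi_\infty(g_2)$ have $\ZZ$-components $\pm1$ and are non-conjugate in $\ZZ_3\wr\ZZ$, hence $g_1\not\sim g_2$ in $G$. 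Your layer-by-layer centraliser induction never sees this obstruction because it lives entirely in the infinite quotient $\ZZ_3\wr\ZZ$, which your chain discards.

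The paper's locality statement (\cref{prop:ConjIsLocal}) therefore keeps $\pi_\infty$: one has $g_1\sim g_2$ in $G$ iff $\pi_\infty(g_1)\sim\pi_\infty(g_2)$ in $\ZZ_3\wr\ZZ$ \emph{and} $\pi_m(g_1)\sim\pi_m(g_2)$ for all $m\le Kn$. Its proof is not an inductive lift through layers but uses an external input you do not mention: Sale's theorem that $\ZZ_3\wr\ZZ$ has linearly bounded conjugator length, so a short conjugator there lifts to a short word in $G$ which by \cref{prop:WordproblemIsLocal} already conjugates correctly in every $\alt(d(m))$ with $m$ large, leaving finitely many small $m$ to fix via $\alt(d(m))\le G$. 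For the upper bound one then needs a second external input: when $\pi_\infty(g_1)\not\sim\pi_\infty(g_2)$, Ferov's effective conjugacy separability of $\ZZ_3\wr\ZZ$ (at most exponential, hence dominated by $F$) supplies the required small finite quotient. Both the Sale and Ferov ingredients are essential and are precisely what your sketch is missing.
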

	
	Although these groups have the same residual finiteness growth and conjugacy separability growth, for general groups this is not the case. However, previous work does not specify how far these functions can lie apart. We construct a new class of groups demonstrating that the residual finiteness growth and conjugacy separability growth may be chosen independently. More specifically, we have the following result.

	\begin{result}
		Let $F_1,F_2:\NN\rightarrow\NN$ be non-decreasing functions satisfying \ref{eq:globalgrowth} and \ref{eq:localgrowth} such that $F_2\geq F_1.$ Then there exists a conjugacy separable group $G$ such that \begin{itemize}
			\item $\rf_G(n)\simeq F_1(n)$;\\
			\item $\conj_G(n)\simeq F_2(n)$.\\
		\end{itemize}
	\end{result}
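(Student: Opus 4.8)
The strategy is to combine Bradford's construction (which realizes a single function $F$ as a residual finiteness growth, and which by Theorem A we now know is conjugacy separable with matching conjugacy separability growth) with a controlled amalgamation or direct-product-like construction that ``inflates'' the conjugacy separability growth without disturbing the residual finiteness growth. The idea is that in Bradford's groups the residual finiteness obstruction comes from detecting non-trivial elements, while the harder conjugacy obstruction can be engineered separately. So I would look for a group $G = G(F_1, F_2)$ built from two interacting pieces: a piece $H_1$ responsible for $\rf_{G} \simeq F_1$ and a piece $H_2$ responsible for pushing $\conj_{G} \simeq F_2$, arranged so that (i) short non-trivial elements of $G$ are already separated in quotients of size $\simeq F_1(n)$, but (ii) there exist pairs of short non-conjugate elements whose non-conjugacy is only visible in quotients of size $\simeq F_2(n)$.

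Concretely, I would first recall the internal structure of Bradford's groups: they are (iterated) extensions or wreath-type constructions indexed by a sequence of finite groups whose orders grow like $F$, and the word metric is arranged so that an element $g$ of norm $n$ ``lives'' at level roughly $\log$-controlled by $n$, forcing any quotient detecting $g$ to have order $\gtrsim F(n)$. To realize $F_1$ and $F_2$ simultaneously, I would index two such sequences, one calibrated to $F_1$ and one to $F_2$, and take an amalgam (or an HNN/semidirect product) in which the $F_2$-calibrated part only ever contributes \emph{conjugacy} obstructions: for instance, by placing the $F_2$-data inside a subgroup on which $G$ acts by conjugation in such a way that two elements are non-conjugate iff their images in a large finite quotient of that subgroup are non-conjugate, while every such element is still \emph{detected} (separated from $1$) in a small quotient because the subgroup itself surjects onto something small. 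The hypothesis $F_2 \geq F_1$ is exactly what makes this consistent, since $\conj_G \geq \rf_G$ always, and we need room to place the conjugacy obstruction above the residual-finiteness one.

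The main technical work, and the part I expect to be the principal obstacle, is the \emph{lower bound} $\conj_G(n) \gtrsim F_2(n)$ together with the \emph{upper bound} $\rf_G(n) \lesssim F_1(n)$ holding simultaneously: one must exhibit, for each $n$, an explicit pair of non-conjugate elements of norm $\le n$ that genuinely requires a quotient of size $\simeq F_2(n)$ to separate, and one must show that \emph{every} quotient separating a short non-trivial element can be taken of size $\simeq F_1(n)$ — i.e. that the $F_2$-machinery never accidentally forces large quotients for mere non-triviality. This requires a careful analysis of the finite quotients of the amalgam (presumably via the structure theory of quotients of amalgamated products, controlling how a finite quotient must restrict to each factor), and a verification that the new group is still conjugacy separable (which I would do by the same subgroup-separability / double-coset separability criteria used for Theorem A, applied factorwise and then assembled using separability properties of amalgams over the relevant edge groups). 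Finally I would check the growth conditions \ref{eq:globalgrowth} and \ref{eq:localgrowth} are inherited so that the $\simeq$-estimates are meaningful, and confirm conjugacy separability of $G$ as a whole; the $F_2 \ge F_1$ assumption should make all the bookkeeping go through without contradiction.
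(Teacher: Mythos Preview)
Your proposal identifies the right \emph{shape} of the difficulty---one needs a single group in which short non-trivial elements are detected in small quotients while some pair of short non-conjugate elements requires a much larger quotient---but it does not supply a mechanism, and the mechanisms you gesture at would not work as stated. A direct product $G(F_1)\times G(F_2)$ of two Bradford groups fails immediately: a short non-trivial element lying in the $G(F_2)$ factor already forces $\rf$ to be of order $F_2$. An amalgam or HNN extension runs into the opposite problem: you say the $F_2$-calibrated piece should ``surject onto something small'' so that non-triviality is cheap, but then you give no reason why conjugacy in that piece should be expensive. Conjugacy separability of amalgams is also delicate and typically requires hypotheses (malnormality, separability of double cosets in the vertex groups, etc.) that you have not arranged; invoking ``the same criteria as Theorem A'' is not enough, since Theorem A is proved for a very specific class of groups that are not amalgams at all.

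The paper's actual construction stays entirely inside the Neumann--Bradford framework and does not use any amalgam. The key idea you are missing is this: instead of one alternating factor $\alt(d(n))$ at level $n$, one places a \emph{chain} of alternating factors $\alt(d(n,1)),\ldots,\alt(d(n,m_n))$ with $d(n,1)$ calibrated to $F_1$ and $d(n,m_n)$ calibrated to $F_2$, all acted on by the \emph{same} generators. The residual finiteness obstruction (a short commutator word) is already non-trivial in the smallest factor $\alt(d(n,1))$, giving $\rf\simeq F_1$. For the conjugacy lower bound one exhibits two explicit short words $g_1,g_2$ whose images in $\alt(d)$ are conjugate when $d\equiv 1\bmod 6$ and non-conjugate when $d\equiv 5\bmod 6$; one then arranges $d(n,m)\equiv 1\bmod 6$ for all $m<m_n$ and $d(n,m_n)\equiv 5\bmod 6$. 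Thus the conjugacy obstruction is invisible in every factor except the largest one, forcing $\conj\simeq F_2$. This arithmetic trick---a pair of words whose conjugacy in $\alt(d)$ depends only on the residue of $d$---is the genuine content of the proof, and nothing in your proposal approximates it.
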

	
	We will start this paper by giving some limited background on effective separability, then in \cref{sec:bradconstruct}, we will give a construction of the groups from \cite{Bradford}. Our construction will be slightly different form the original construction but the groups obtained are isomorphic in a natural way. In \cref{sec:conjOnBrad} we will describe the conjugacy separability of these groups in \cref{prop:exactupperbound}. In \cref{sec:constructingGroups} we will construct our new class of groups, and use them to demonstrate \cref{prop:conjGap}.
	
	\section{Preliminaries}
	\subsection{Notations}
	Let $\NN$ be the set $\{1,2,3,\cdots\}$.
	Let $G$ be a group, we say $g_1,g_2\in G$ are conjugate if there exists some $h\in G$ such that $hg_1h\inv=g_2$. In this case we denote $g_1\sim g_2$ and otherwise we denote $g_1\not\sim g_2$.
	We denote the commutator ${[g_1,g_2]=g_1g_2g_1\inv g_2\inv}$.
	Let $N$ be a subgroup of a group $G$, then denote $[G:N]$ its index.
	
	Let $X$ be a finite set, then we denote the alternating group $\alt(X)$ the group of even permutations on $X$, acting from the left. If $n\in \NN$ is an integer, then we denote $\alt(n)=\alt\set{0,1,\cdots,n-1}$.
	Similarly, denote $\sym(X)$ the group of permutations on $X$, acting from the left and $\sym(n)=\sym\{0,1,\cdots, n-1\}$
		
	Let $F_{a,b}$ be the free group on the generators $a,b$, suppose that $w\in F_{a,b}$ and let $\alpha,\beta\in G$ be two elements of some group. There exists a unique morphism $\varphi_{\alpha,\beta}$ of $F_{a,b}$ to $G$, mapping $a$ to $\alpha$ and $b$ to $\beta$, denote with $w(\alpha,\beta)=\varphi_{\alpha,\beta}(w)$. 
	\subsection{Separability}
	A group $G$ is called \textbf{residually finite} if for any non-trivial element $g\in G$, there exists a finite quotient $\varphi:G\rightarrow Q$ such that $\varphi(g)$ is non-trivial. Related to this, a group $G$ is called \textbf{conjugacy separable} if for any pair of non-conjugate elements $g_1,g_2\in G$, there exists a finite quotient $\varphi:G\rightarrow Q$ such that $\varphi(g_1)$ and $\varphi(g_2)$ are non-conjugate. 
	By taking $g_1$ trivial, it follows that a conjugacy separable group is also residually finite.

	The ease of separating elements in finite quotients can then be measured through the \textbf{residual depth function} $\rf_G$ or the \textbf{conjugacy depth function} $\conj_G$ defined respectively as.
	$$
	\rf_G:G\rightarrow\NN:g\mapsto \begin{cases}
		\begin{matrix}
			1&\text{if } g=1\\
			\min\set{[G:N]\mid N\normal G, g\notin N}&\text{otherwise}
		\end{matrix}
	\end{cases}
	$$
	and
	$$
	\conj_{G}:G\times G\rightarrow \NN:(g_1,g_2)\mapsto\begin{cases}
		\begin{matrix}
			1&\text{if } g_1\sim g_2\\
			\min\set{[G:N]\mid N\normal G, g_1N\not\sim g_2N}&\text{otherwise}
		\end{matrix}
	\end{cases}
	$$
	Again we have $\conj_G(g,1)=\rf_G(g)$.
	
	Let $G$ be a group, finitely generated by $S$. We can then define the \textbf{word norm} $\norm{\_}_S:G\rightarrow \NN$ as follows:
	$$g\mapsto \begin{cases}\begin{matrix}
			0 &\text{if } g=e_G\\
			\min\set{n\in\NN\mid g=s_1s_2\cdots s_n\text{ where } s_i\in S\cup S\inv} &\text{otherwise}
		\end{matrix}
	\end{cases}$$

	If we compare the residual depth function or the conjugacy depth function with the word norm, then we obtain the \textbf{residual finiteness growth function} $\rf_G^S$ or the \textbf{conjugacy separability growth function} $\conj_G^S$ respectively, defined as:
	$$
	\rf_G^S:\NN\rightarrow\NN:n\mapsto \max\set{\rf_G(g)\mid g\in G,\norm{g}_S\leq n}
	$$
	and
	$$
	\conj_{G}^S:\NN\rightarrow \NN:n\mapsto \max\set{\conj_G(g_1,g_2)\mid g_1,g_2\in G,\norm{g_1}_S\leq n,\norm{g_2}\leq n}.
	$$
	
	These functions both depend on the choice of generating set. This dependence, however, is only slight. In particular we have:\begin{lemma}
		Let $G$ be a residually finite group and let $S$ and $T$ be finite generating sets of $G$. Then there exists a constant $C$ such that$$
		\rf_G^S(n)\leq\rf_G^T(Cn)
		$$
		and vice versa.
	\end{lemma}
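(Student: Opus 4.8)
The plan is to exploit the fact that a change of generating set distorts word norm only linearly. First I would observe that since both $S$ and $T$ are finite and generate $G$, every element $t \in T$ can be written as a word in $S \cup S\inv$, so there is a constant $C = \max_{t \in T} \norm{t}_S$ (and symmetrically a constant for the other direction). From this one gets the pointwise inequality $\norm{g}_S \leq C \norm{g}_T$ for all $g \in G$: given a geodesic expression $g = t_1 \cdots t_k$ with $k = \norm{g}_T$ and each $t_i \in T \cup T\inv$, substitute each $t_i$ by its $S$-word of length at most $C$, yielding an $S$-word of length at most $Ck$.

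Next I would feed this norm comparison into the definitions. The residual depth function $\rf_G(g)$ depends only on the abstract element $g$ and the subgroup structure of $G$, not on any generating set. Hence $\rf_G^T(n) = \max\set{\rf_G(g) \mid \norm{g}_T \leq n}$, and since $\norm{g}_T \leq n$ forces $\norm{g}_S \leq Cn$, every element counted in the maximum defining $\rf_G^T(n)$ is also counted in the maximum defining $\rf_G^S(Cn)$ — actually I want the inequality in the direction stated, so let me instead use the symmetric constant $C'$ with $\norm{g}_T \leq C' \norm{g}_S$: then $\norm{g}_S \leq n$ implies $\norm{g}_T \leq C' n$, so the set $\set{g \mid \norm{g}_S \leq n}$ is contained in $\set{g \mid \norm{g}_T \leq C'n}$, and taking the maximum of $\rf_G$ over the smaller set is at most the maximum over the larger set, giving $\rf_G^S(n) \leq \rf_G^T(C'n)$. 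Writing $C$ for $C'$ gives the claim; swapping the roles of $S$ and $T$ gives the "vice versa" statement with a possibly different constant.

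There is essentially no serious obstacle here; the only point requiring a moment's care is that $\rf_G^S(n)$ is defined as a maximum over a finite set (the ball of radius $n$ is finite in a finitely generated group, or at worst one takes a supremum that is finite because $G$ is residually finite), so the maximum is well-defined and monotonicity under inclusion of the index sets is immediate. One should also note the degenerate case $n$ such that the ball is trivial, where both sides equal $1$. I would state the argument for $\rf_G$ and remark that the identical argument, using $\conj_G(g_1,g_2)$ in place of $\rf_G(g)$ and ranging over pairs, yields the corresponding statement for $\conj_G^S$ versus $\conj_G^T$, which is what is actually needed later in the paper.
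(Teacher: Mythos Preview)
Your argument is correct and is precisely the standard proof: compare word norms via the finite constant $C' = \max_{s\in S}\norm{s}_T$, use that $\rf_G(g)$ is generating-set-independent, and pass to the containment of balls. The paper itself does not spell this out but simply cites \cite[Lemma~1.1]{bou2010quantifying}, whose content is exactly what you wrote; your remark that the same reasoning handles $\conj_G$ is also accurate.
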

	 The same holds for the function $\conj$ in conjugacy separable groups.
	\begin{proof}
		See for instance \cite[Lemma 1.1]{bou2010quantifying}.
	\end{proof}
	
	This motivates the following definition.
	\begin{definition}
		Let $f,g:\NN\rightarrow\RR_{>0}$ be non-decreasing functions. Then we say $f\prec g$ if there exists some natural constant $c$ such that $f(n)\leq g(cn)$ for all $n\in \NN_0$. Furthermore if $f\prec g$ and $g\prec f$, then we say $f\simeq g$.
	\footnote{In the literature,sometimes a variation of $\prec$ is used where $f(n)\prec g(n)$ if $f(n)\leq cg(cn)$. This is not equivalent in general, but thanks to condition\ref{eq:localgrowth} the main results remain the same under this alternative definition.}
	\end{definition}
	With notation as above, we then have for two finite generating sets $S$ and $T$ that $$
	\rf_G^S(n)\simeq \rf_G^T(n)
	$$
	and that 
	$$
	\conj_G^S(n)\simeq\conj_G^T(n).
	$$

	\section{Bradford's construction}\label{sec:bradconstruct}
	In this section we mention a first class of groups. These groups where constructed by Bradford in \cite{Bradford} as a generalisation to the B.H.Neumann groups from \cite{neumann1937some}. These groups where introduced to prove the following theorem:
	\begin{theorem}\cite[Theorem 1.1]{Bradford}\label{prop:mainBradResult}
		Let $F:\NN\rightarrow\NN$ be a non-decreasing function such that:
		\begin{enumerate}[label=(\Alph*)]
			\item There exist $c,\epsilon>0$ such that for all $n\in\NN$,$$
			F(n)\geq \exp(cn\log(n)^2\log\log(n)^{1+\epsilon})\label{cond:globalgrowth1};
			$$
			\item There exist $C_1,C_2>1$ such that for all $n\in\NN$,$$\label{cond:localgrowth1}
			F(n)^{C_1}\leq F(C_2(n)).
			$$
		\end{enumerate}
		Then there exists a residually finite group $G$, generated by a finite set S, and $C_1',C_2'>0$ such that for all $n\in\NN$ $$
		F(C_1'n)\leq \mathrm{Rf}_G^S\leq F(C_2'n).
		$$
	\end{theorem}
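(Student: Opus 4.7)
The plan is to realise $G$ as a two-generator subgroup of a restricted product $\prod_{k\geq 1}\alt(n_k)$ for a carefully chosen sequence $(n_k)$, generalising the classical construction of B.H.~Neumann. The essential idea is to produce, for each $k$, permutations $\alpha_k,\beta_k\in\alt(n_k)$ and a ``witness word'' $w_k\in F_{a,b}$ such that $w_k(\alpha_k,\beta_k)\neq 1$ in $\alt(n_k)$ but $w_k(\alpha_j,\beta_j)=1$ in $\alt(n_j)$ for every $j\neq k$. Setting $\alpha=(\alpha_k)_k$, $\beta=(\beta_k)_k$ and $G=\spn{\alpha,\beta}$, the element $g_k=w_k(\alpha,\beta)$ is then supported only on the $k$-th factor, and by simplicity of $\alt(n_k)$ (assuming $n_k\geq 5$) any quotient distinguishing $g_k$ from $1$ must factor through the projection onto $\alt(n_k)$.

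First, I would construct the witness words. For each $k$, choose $\alpha_k,\beta_k\in\alt(n_k)$ generating $\alt(n_k)$ with cycle structures whose orders encode $k$ combinatorially (for instance using pairwise coprime cycle lengths spaced in $k$). The word $w_k$ is then built as a short nested commutator or power-commutator of length $\abs{w_k}\asymp\log(n_k)$, designed so that its iterated action produces a single $3$-cycle in $\alt(n_k)$ while collapsing to the identity in every $\alt(n_j)$ with $j\neq k$. Second, I would choose $(n_k)$ so that $\log\abs{\alt(n_k)}\simeq \log F(\abs{w_k})$; condition (A) is precisely what is needed to make this consistent with the asymptotic $\log\abs{\alt(n)}\asymp n\log n$.

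For the lower bound, the projection $G\twoheadrightarrow\alt(n_k)$ is surjective, so $G$ is residually finite via the projections onto finite subproducts. Any normal subgroup $N\normal G$ with $g_k\notin N$ has a projection to $\alt(n_k)$ that is a proper normal subgroup, hence trivial by simplicity, so $G/N$ surjects onto $\alt(n_k)$ and $\rf_G(g_k)\geq \abs{\alt(n_k)}$; combined with $\norm{g_k}_S\leq\abs{w_k}$ and the choice of $n_k$, this yields $\rf_G^S(n)\geq F(C_1'n)$ after applying condition (B) to absorb multiplicative constants. For the upper bound, given any $g\in G$ with $\norm{g}_S\leq n$, one argues that the smallest $k$ where $g$ has a non-trivial coordinate satisfies $k\leq K(n)$ for a controlled $K(n)$; the projection onto $\prod_{j\leq K(n)}\alt(n_j)$ then separates $g$ from $1$, and its cardinality is bounded by a product estimate that (B) converts back to $F(C_2'n)$.

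The main obstacle is the combinatorial engineering of the triples $(\alpha_k,\beta_k,w_k)$. A naive Neumann-style construction produces witness words of length linear in $k$, which is only enough to realise singly exponential growth rates for $\rf_G^S$; to reach the broad class of functions allowed by (A) one needs $\abs{w_k}$ logarithmic in $n_k$, which forces a delicate reuse of short subword patterns across levels and a careful cycle/order analysis to ensure each $w_k$ vanishes identically off-diagonal. Verifying that these witnesses behave exactly as intended, and that the resulting growth estimates pass cleanly through conditions (A) and (B), is where essentially all the technical work of the proof lies.
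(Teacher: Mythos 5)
Your skeleton --- a two-generated subgroup of a product $\prod_k\alt(n_k)$, witness words supported on a single factor, and simplicity of $\alt(n_k)$ forcing any quotient separating the witness from $1$ to have order at least $\abs{\alt(n_k)}$ --- is exactly the architecture of Bradford's construction as recalled in \cref{sec:bradconstruct} (the paper's variant adds a $\ZZ_3\wr\ZZ$ factor but shows the projection away from it is an isomorphism onto $B(d,r,r)$). The problem lies in your quantitative design of the witnesses, which is where the entire content of the theorem sits.

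You assert that realizing the full class of functions allowed by (A) requires witness words of length $\abs{w_k}\asymp\log(n_k)$, and that linear-length witnesses only reach singly exponential growth. Both claims are wrong, and the second confuses the index $k$ with the degree $n_k$. The actual construction takes $\alpha_d=(0,1,\dots,d-1)$, $\beta_{r,d}=(0,r,2r)$ and witnesses $g_n=[\alpha^{r(n)}\beta\inv\alpha^{-r(n)},\beta\inv]$ of length $\Theta(r(n))=\Theta(n)$, detected in the factor of degree $d(n)\approx\log F(n)/\log\log F(n)$; at the threshold of condition (A) this degree is $\approx n\log(n)(\log\log n)^{1+\epsilon}$, so the witness length is the degree divided by a polylogarithmic factor, not its logarithm. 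Condition (A) is precisely the threshold making such a degree reachable by words of length $n$ while keeping $\sum_m 1/d(m)<\infty$, which the congruence conditions defining $r$ require. If one really had $\abs{w_k}\asymp\log n_k$, then $\rf_G(g_k)\geq\abs{\alt(n_k)}\approx\exp(n_k\log n_k)$ at argument $\log n_k$ would force $\rf_G^S$ to be at least doubly exponential, missing most of the range of (A) (e.g.\ $F(n)=\exp(n(\log n)^3)$). Such short witnesses are in any case unobtainable here: by \cref{prop:WordproblemIsLocal}, triviality of $w(\alpha_d,\beta_{r,d})$ is independent of $d$ once $d$ exceeds a linear function of $\norm{w}$, so a word of length $\log n_k$ cannot single out the factor $\alt(n_k)$. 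A secondary gap: your upper bound projects onto $\prod_{j\leq K(n)}\alt(n_j)$, whose order is roughly $F(n)^{K(n)}$; condition (B) only absorbs \emph{bounded} powers under a linear rescaling of the argument, so this does not yield $\rf_G^S(n)\leq F(C_2'n)$. The correct argument maps onto the single smallest factor in which $g$ is nontrivial, whose index is $O(n)$ by the local-to-global principle above.
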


Let $F:\NN\rightarrow \NN$ be a non-decreasing function satisfying \ref{cond:globalgrowth1} and \ref{cond:localgrowth1},
\cite[Lemma 2.3.]{Bradford} states that for a given $C_1>0$, there exists some constant $C_2$ such that $$f(n)=\lceil\log F(n+C_2)/\log \log F(n+C_2) \rceil\geq \max(C_1n\log(n)\left(\log\log(n)\right)^{1+\frac{\epsilon}{2}},C_1)$$

We can use Bertrand's postulate to find some non-decreasing function $d:\NN\rightarrow \NN$, taking on prime values larger then $5$, such that $f(n)\leq d(n)\leq 2f(n)$.

The following lemma states that $d(n)!/2$ is a good approximation of $F(n)$

\begin{lemma}\label{prop:inverseFactorialAproximation}
	Let $F:\NN\rightarrow\NN>3$ be a non decreasing function satisfying \ref{cond:localgrowth1}. Let $g:\NN\rightarrow\NN$ be some non-decreasing function. Suppose that for $n$ sufficiently large $$\left\lceil\frac{\log F(n)}{\log\log F(n)}\right\rceil\leq g(n)\leq C_3\left\lceil\frac{\log F(n)}{\log\log F(n)}\right\rceil$$
	for some constant $C_3>1$, then $g(n)!/2\simeq F(n)$		
\end{lemma}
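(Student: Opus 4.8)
The plan is to show the two inequalities $F(n) \prec g(n)!/2$ and $g(n)!/2 \prec F(n)$ separately, working with logarithms throughout and using Stirling's approximation $\log(m!) = m\log m - m + O(\log m)$ to control the factorial. Write $L(n) = \log F(n)$ and $\ell(n) = \log L(n)$, so the hypothesis reads $\lceil L/\ell \rceil \le g \le C_3 \lceil L/\ell \rceil$. Applying Stirling to $m = g(n)$ gives $\log(g(n)!/2) = g(n)\log g(n) - g(n) + O(\log g(n))$, and the main task is to verify that $g(n)\log g(n)$ is comparable, up to the controlled distortion allowed by $\prec$, to $L(n) = \log F(n)$; the linear term $-g(n)$ and the $O(\log g(n))$ error are lower order and absorbed.

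First I would establish the upper bound direction. From $g(n) \le C_3\lceil L/\ell\rceil \le C_3(L/\ell + 1)$ and the fact that $\log g(n) \le \log L(n) + O(1) = \ell(n) + O(1)$ (using condition \ref{cond:globalgrowth1} to guarantee $L(n)$, hence $\ell(n)$, is large, so that $\ell(n)+O(1) \le 2\ell(n)$ eventually), one gets
\begin{equation*}
g(n)\log g(n) \le C_3\left(\frac{L(n)}{\ell(n)} + 1\right)\bigl(\ell(n) + O(1)\bigr) \le C_4 L(n)
\end{equation*}
for a suitable constant $C_4$ and all large $n$. Hence $\log(g(n)!/2) \le C_5 L(n) = C_5 \log F(n)$, i.e. $g(n)!/2 \le F(n)^{C_5}$. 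Now invoke condition \ref{cond:localgrowth1}: iterating $F(n)^{C_1} \le F(C_2 n)$ enough times shows $F(n)^{C_5} \le F(C n)$ for some constant $C$ (choose $k$ with $C_1^k \ge C_5$ and set $C = C_2^k$), so $g(n)!/2 \le F(Cn)$, which is exactly $g(n)!/2 \prec F(n)$. For the finitely many small $n$ not covered, one adjusts the constant as usual.

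For the lower bound $F(n) \prec g(n)!/2$, run the same estimate in reverse: from $g(n) \ge \lceil L/\ell \rceil \ge L(n)/\ell(n)$ and $\log g(n) \ge \log L(n) - \log\ell(n) = \ell(n) - \log\ell(n) \ge \frac{1}{2}\ell(n)$ for large $n$ (again using that $\ell(n)\to\infty$), we obtain $g(n)\log g(n) \ge \frac{1}{2} L(n)$, and Stirling together with the lower-order terms gives $\log(g(n)!/2) \ge \frac{1}{4}L(n)$ eventually, say $F(n) \le (g(n)!/2)^4$. This does not immediately look like $F(n) \le F(Cn)$-style bound in terms of $g$, but that is fine: $\prec$ only requires $F(n) \le (g(\cdot)!/2)$ evaluated at $cn$, and here I instead want to compare $F(n)$ with $g(n)!/2$ directly. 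Since $\log(g(n)!/2) \ge \frac14\log F(n)$ is the wrong direction, I should rather use the \emph{upper} bound on $g$ in this step too: the point is $\log(g(n)!/2) \asymp g(n)\log g(n) \asymp L(n) = \log F(n)$, so in fact $c_1 \log F(n) \le \log(g(n)!/2) \le c_2\log F(n)$ for constants $0 < c_1 \le c_2$ and all large $n$; then $F(n)^{c_1} \le g(n)!/2 \le F(n)^{c_2}$, and applying \ref{cond:localgrowth1} in the manner above (to go from a power of $F$ at $n$ to $F$ at a constant multiple of $n$, and vice versa via the same iteration trick after noting $F(n) \le F(n)^{C_1} \le F(C_2 n)$ lets one trade powers for dilations in both directions) yields $F(n) \simeq g(n)!/2$.

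The main obstacle, and the only genuinely delicate point, is making sure the two-sided comparison $g(n)\log g(n) \asymp \log F(n)$ is robust: the subtlety is that $\log g(n)$ must be sandwiched between constant multiples of $\ell(n) = \log\log F(n)$, which fails if $F(n)$ is bounded but is guaranteed once condition \ref{cond:globalgrowth1} forces $F(n)$, and therefore $\log\log F(n)$, to grow without bound — this is why the hypothesis is stated only "for $n$ sufficiently large" and why $F$ maps into $\NN_{>3}$. Once that sandwiching is in hand, converting the resulting polynomial-in-logarithm relations back into the dilation-based relation $\simeq$ is a routine application of \ref{cond:localgrowth1}, exactly as in \cite[Lemma 2.3]{Bradford}.
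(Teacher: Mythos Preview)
Your approach is essentially the paper's: both establish that $\log(g(n)!/2)$ lies between constant multiples of $\log F(n)$ (the paper outsources the Stirling computation to \cite[Lemma~2.4]{Bradford}, you do it by hand) and then invoke condition~\ref{cond:localgrowth1} to trade powers of $F$ for dilations of the argument.

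Two small points. First, you appeal to condition~\ref{cond:globalgrowth1} to force $F(n)\to\infty$, but that condition is not among the hypotheses of this lemma; unboundedness follows from~\ref{cond:localgrowth1} and $F>3$ alone (if $F$ were bounded, being non-decreasing it would eventually be constant, say equal to $M\ge 4$, and then $M^{C_1}=F(n)^{C_1}\le F(C_2n)=M$ is impossible). Second, the passage where you say ``$\log(g(n)!/2)\ge\tfrac14\log F(n)$ is the wrong direction'' is a false alarm: it is exactly the right direction, giving $g(n)!/2\ge F(n)^{1/4}$, and then $g(cn)!/2\ge F(cn)^{1/4}\ge F(n)$ for suitable $c$ by iterating~\ref{cond:localgrowth1}, just as you did for the upper bound. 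Your subsequent restart via the two-sided $\asymp$ is fine too, but unnecessary.
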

\begin{proof}
	First notice that $F(n)$ must tend to infinity.
	Let $f_0(n)=\left\lceil\frac{\log F(n)}{\log\log F(n)}\right\rceil$. 
	By \cite[Lemma 2.4.]{Bradford}, we have for $K\in \NN$ that $$
	(Kf_0)!=F(n)^{K+O_K(\frac{\log\log\log F(n)}{\log\log F(n)})}.
	$$
	Evaluating for $K=1$ and $K=C_3$ respectively, we obtain that for sufficiently large $n$,
	$$g(n)!\geq F(n)^{1-C_4\frac{\log\log\log F(n)}{\log\log F(n)}}$$
	and $$
	g(n)!\leq F(n)^{1+C_5\frac{\log\log\log F(n)}{\log\log F(n)}}
	$$
	for some constants $C_4,C_5>0$.
	Notice that $\frac{\log\log\log F(n)}{\log\log F(n)}$ tends to $0$ and thus for $n$ sufficiently large, we obtain 
	$$g(n)!\geq F(n)^{\frac{2}{3}}$$
	and $$
	g(n)!\leq F(n)^{\frac{3}{2}}.
	$$
	Furthermore, as $F(n)$ tends to infinity, we have that for sufficiently large $n$ that$$
	\frac{g(n)!}{2}\geq F(n)^{\frac{1}{2}}.
	$$
	After repeatedly applying \ref{cond:localgrowth1}, we obtain that there exist constants $C_7,C_8$ such that for $n$ sufficiently large we have $$
	\frac{g(C_8n)!}{2}\geq F(n)
	$$
	and $$
	\frac{g(n)!}{2}\leq F(C_7n)
	$$
	for $n$ sufficiently large.
	Enlarging these constants if necessary, we obtain that this also holds for all $n$.
\end{proof}
For the function $d$ as above, the groups we construct will be a $2$-generated subgroup of 
$$
G_0(d)=\left(\prod_{m=1}^\infty \mathrm{Alt}(d(m))\right)\times \ZZ_3\wr\ZZ.
$$
On this group we denote the natural projections$$
\pi_m:G_0(d)\rightarrow \alt(d(m))
$$
and$$
\pi_\infty:G_0(d)\rightarrow\ZZ_3\wr\ZZ.
$$

To define our generators, we use a second function.
By \cite[Proposition 2.8.]{Bradford} we can find some $r:\NN\rightarrow\NN$ such that:
\begin{enumerate}[label=(\arabic*)]
	\item For all $n\in\NN,n<r(n)<18n$ and $r(n)<d(n)/3;$\\
	\item For all $l,m\in\NN$, if $l\neq m$ then $r(l)\not\equiv\pm r(m),\pm2r(m)\mod d(m).$
\end{enumerate}

We now construct two generators $\alpha_*$ and $\beta_*$ as follows:
for natural numbers $d$ and $r$, let $\alpha_d,\beta_{r,d}\in\mathrm{Alt}(d)$ be given respectively, by the $d$ cycle $$
\alpha_d=(0,1,\cdots, d-1)
$$
and the $3$ cycle
$$
\beta_{r,d}=(0,r,2r).
$$
Furthermore let $\alpha_\infty,\beta_\infty\in\ZZ_3\wr\ZZ$ be given by:$$
\alpha_\infty=(0,1)
$$
and $$
\beta_\infty=1\in\ZZ_3<\ZZ_3\wr\ZZ
$$

	Given $\alpha_{d}$ and $\beta_{r,d}$ defined as before,
then $\alpha_*=((\alpha_{d(m)})_m,\alpha_\infty)$ and $\beta_*=((\beta_{r(m),d(m)})_m,\beta_\infty)$ are elements of $$
	G_0(d).
	$$
	Define then $G(d,r)$ the subgroup of $G_0(d)$ generated by $S=\{\alpha_*,\beta_*\}$.
	
	The notation $\alpha_\infty$ en $\beta_\infty$ is suggestive of a correspondence between these elements and $\alpha_d$ and $\beta_{r,d}$. This is made more concrete in the following proposition:	
	\begin{proposition}{\cite[Proposition 2.12.]{Bradford}}\label{prop:WordproblemIsLocal}
		Let $w\in F_{a,b}$ be a freely reduced word of length at most $n$ in the variables $x$ and $y$. Let $r,d\in\NN$ and suppose
		$$
		d-2r\geq 2n+1
		$$
		and$$
		r\geq 2n+1.
		$$
		Then $w(\alpha_d,\betha_{r,d})=e$ in $\alt(d)$, if and only if $w(\alpha_\infty,\beta_\infty)=e$ in $\ZZ_3\wr\ZZ$.
	\end{proposition}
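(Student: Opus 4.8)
The plan is to evaluate $w$ on both sides after rewriting it in a normal form that isolates the $b$--letters, and then to use the hypotheses on $r$ and $d$ only to guarantee that the conjugates of $\beta_{r,d}$ appearing in $\alt(d)$ have pairwise disjoint supports, so that $\alt(d)$ records exactly the same combinatorial data as $\ZZ_3\wr\ZZ$. Concretely, I would write the freely reduced word as $w=a^{c_0}b^{\varepsilon_1}a^{c_1}\cdots b^{\varepsilon_k}a^{c_k}$ with $\varepsilon_i\in\{\pm 1\}$ and $c_i\in\ZZ$, set the partial sums $P_i=c_0+\cdots+c_{i-1}$ and let $\sigma=c_0+\cdots+c_k$ be the exponent sum of $a$ in $w$. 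A telescoping computation gives the free--group identity $w=\bigl(\prod_{i=1}^{k}a^{P_i}b^{\varepsilon_i}a^{-P_i}\bigr)a^{\sigma}$, and from $\abs{w}=\sum_i\abs{c_i}+k\le n$ one gets $\abs{P_i}\le n-1<n$ for every $i$ and $\abs{\sigma}\le n$. For $m\in\ZZ$ I would then put $s_m=\sum_{i:\,P_i=m}\varepsilon_i$, noting that only the $s_m$ with $\abs{m}<n$ can be nonzero.

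Applying the morphism $\varphi_{\alpha_\infty,\beta_\infty}$ to this identity, each factor $a^{P_i}b^{\varepsilon_i}a^{-P_i}$ lands in the base group $\bigoplus_{\ZZ}\ZZ_3$, supported at a single coordinate determined by $P_i$ with entry $\varepsilon_i$, while $a^{\sigma}$ contributes $\sigma$ to the $\ZZ$--component. Hence $w(\alpha_\infty,\beta_\infty)$ is the pair with base part $\bigl(s_m\bmod 3\bigr)_m$ and $\ZZ$--part $\sigma$, so $w(\alpha_\infty,\beta_\infty)=e$ \emph{if and only if} $\sigma=0$ and $3\mid s_m$ for every $m$.

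Next I would apply $\varphi_{\alpha_d,\beta_{r,d}}$. Using $\alpha_d^m\beta_{r,d}\alpha_d^{-m}=(m,\ m+r,\ m+2r)$ with indices modulo $d$, the same identity yields $w(\alpha_d,\beta_{r,d})=\bigl(\prod_m c_m^{\,s_m}\bigr)\alpha_d^{\sigma}$, where $c_m=(m,\ m+r,\ m+2r)$ and the product ranges over $\abs{m}<n$ (after regrouping commuting factors). This is the step where the hypotheses enter: since $\abs{m}<n$, $r\ge 2n+1$ and $d-2r\ge 2n+1$, the three ``bands'' $\{m:\abs{m}<n\}$, $\{m+r:\abs{m}<n\}$ and $\{m+2r:\abs{m}<n\}$ reduce to pairwise disjoint blocks of $2n-1$ elements each in $\ZZ/d\ZZ$; consequently every $c_m$ is a genuine $3$--cycle, distinct $c_m$ have disjoint support and commute, and their common support $B$ has $3(2n-1)<d$ elements. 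Therefore $\prod_m c_m^{\,s_m}=e$ precisely when $3\mid s_m$ for all $m$. I expect this disjointness verification --- checking that $r\ge 2n+1$ and $d-2r\ge 2n+1$ are exactly the inequalities that keep the three bands $\{\abs{m}<n\}+jr$ $(j=0,1,2)$ from wrapping into one another modulo $d$ --- to be the one genuinely delicate point; everything else is formal.

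Finally, to pin down the $\sigma$--part on the $\alt(d)$ side, I would argue as follows: if $\sigma\not\equiv 0\pmod{d}$, choose $x\in\ZZ/d\ZZ$ with $x+\sigma\notin B$ (possible since $\abs{B}<d$); then $w(\alpha_d,\beta_{r,d})$ maps $x$ to $x+\sigma\ne x$, so it is nontrivial. Hence $w(\alpha_d,\beta_{r,d})=e$ forces $\sigma\equiv 0\pmod{d}$, and since $\abs{\sigma}\le n<d$ (as $d\ge 2r+2n+1$) this gives $\sigma=0$; then $w(\alpha_d,\beta_{r,d})=\prod_m c_m^{\,s_m}$ and the criterion of the previous paragraph applies. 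Comparing with the wreath--product computation, both $w(\alpha_d,\beta_{r,d})=e$ and $w(\alpha_\infty,\beta_\infty)=e$ are equivalent to the single condition ``$\sigma=0$ and $3\mid s_m$ for all $m$'', which is exactly the asserted equivalence.
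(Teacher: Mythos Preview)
The paper does not give its own proof of this proposition; it is quoted verbatim from \cite{Bradford} and used as a black box. Your argument is correct and is exactly the standard one: the normal form $w=\bigl(\prod_i a^{P_i}b^{\varepsilon_i}a^{-P_i}\bigr)a^{\sigma}$ together with the bounds $\abs{P_i}<n$ and $\abs{\sigma}\le n$ reduces both sides to the same combinatorial condition ``$\sigma=0$ and $3\mid s_m$ for all $m$'', once the hypotheses $r\ge 2n+1$ and $d-2r\ge 2n+1$ force the $3$--cycles $(m,m+r,m+2r)$ with $\abs{m}<n$ to have pairwise disjoint supports in $\ZZ/d\ZZ$. This is also precisely the rewriting the paper itself employs in the proof of \cref{prop:AltIsDInvariant}, so your approach is fully in line with the paper's methods.
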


	Notice that this construction differs slightly from the groups described in \cite[Section 3.1]{Bradford}. We show however that these groups are in fact the same.
	\begin{lemma}
		The canonical projection $$\pi:\prod_{i=1}^\infty \mathrm{Alt}(d(m))\times \ZZ_3\wr\ZZ\rightarrow \prod_{m=1}^\infty \mathrm{Alt}(d(m))
		$$
		induces an isomorphism from $G(d,r)$ to $B(d,r,r)=\langle(\alpha_{d(m)})_m,(\beta_{r(m),d(m)})_m\rangle$.\footnote{These groups $B(d,r,r)$ are the groups constructed in \cite{Bradford}.}
	\end{lemma}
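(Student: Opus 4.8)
The map $\pi$ is a homomorphism, so it restricts to a surjective homomorphism $G(d,r)\to B(d,r,r)$, since $\pi(\alpha_*)=(\alpha_{d(m)})_m$ and $\pi(\beta_*)=(\beta_{r(m),d(m)})_m$ are exactly the defining generators of $B(d,r,r)$. The whole content is therefore injectivity: I must show that if $w\in F_{a,b}$ is a word with $w(\alpha_*,\beta_*)\in\ker\pi$, i.e. $w$ acts trivially on every factor $\mathrm{Alt}(d(m))$, then in fact $w(\alpha_*,\beta_*)=e$, i.e. $w$ also acts trivially on $\ZZ_3\wr\ZZ$ via $(\alpha_\infty,\beta_\infty)$.

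First I would reduce to a freely reduced word $w$ of some length $n$. The key tool is \cref{prop:WordproblemIsLocal}: for any index $m$ with $d(m)-2r(m)\ge 2n+1$ and $r(m)\ge 2n+1$, we have $w(\alpha_{d(m)},\beta_{r(m),d(m)})=e$ in $\mathrm{Alt}(d(m))$ if and only if $w(\alpha_\infty,\beta_\infty)=e$ in $\ZZ_3\wr\ZZ$. So I need to guarantee that at least one such index $m$ exists. From property (1) of $r$ we have $r(m)<d(m)/3$, hence $d(m)-2r(m)>d(m)/3>r(m)$; and $d$ is non-decreasing taking prime values $>5$, while $r(m)<18m$, so $r(m)$ and $d(m)-2r(m)$ both grow at least linearly — more carefully, since $d(m)\ge d(1)$ and $d$ is unbounded (being a strictly relevant approximation of $f$ which tends to infinity), for $m$ large enough both inequalities $d(m)-2r(m)\ge 2n+1$ and $r(m)\ge 2n+1$ hold. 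Actually I only need one such $m$, so it suffices to note that $r(m)\to\infty$: indeed $r(m)>m\to\infty$ by property (1), and correspondingly $d(m)>3r(m)\to\infty$, so $d(m)-2r(m)>r(m)\to\infty$ as well. Pick any $m$ with $r(m)\ge 2n+1$.

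Now the argument closes: if $w(\alpha_*,\beta_*)\in\ker\pi$ then in particular $w(\alpha_{d(m)},\beta_{r(m),d(m)})=e$ for this $m$, so by \cref{prop:WordproblemIsLocal} we get $w(\alpha_\infty,\beta_\infty)=e$ in $\ZZ_3\wr\ZZ$; but then $w(\alpha_{d(k)},\beta_{r(k),d(k)})=e$ for \emph{every} $k$ with $r(k)\ge 2n+1$ by the same proposition, and — this is the one genuinely delicate point — I must also rule out nontrivial action on the finitely many remaining small-index factors $k$ with $r(k)<2n+1$. Here I would observe that $w(\alpha_*,\beta_*)\in\ker\pi$ already tells us $w(\alpha_{d(k)},\beta_{r(k),d(k)})=e$ for \emph{all} $k$ by hypothesis (that is what $\ker\pi$ means), so there is nothing extra to check: the hypothesis $w(\alpha_*,\beta_*)\in\ker\pi$ directly gives triviality on all finite factors, and \cref{prop:WordproblemIsLocal} applied at a single suitable $m$ upgrades this to triviality on the $\ZZ_3\wr\ZZ$ factor. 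Hence $w(\alpha_*,\beta_*)=e$ in $G_0(d)$, so $\pi|_{G(d,r)}$ is injective, and therefore an isomorphism onto $B(d,r,r)$.

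The main obstacle is really just making sure such an index $m$ with $r(m)\ge 2n+1$ and $d(m)-2r(m)\ge 2n+1$ exists — i.e. citing the unboundedness of $r$ and $d$ cleanly from property (1) of the $r$-function and the growth of $d$. Everything else is a formal consequence of \cref{prop:WordproblemIsLocal} and the definition of the kernel. I would also remark explicitly that this shows, more than stated, that $G(d,r)$ is the graph of a homomorphism $B(d,r,r)\to\ZZ_3\wr\ZZ$, which is the conceptual reason the two descriptions coincide.
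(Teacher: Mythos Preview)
Your proposal is correct and follows essentially the same approach as the paper: surjectivity is immediate from the generators, and injectivity comes from applying \cref{prop:WordproblemIsLocal} at a single sufficiently large index $m$ to transfer triviality in $\mathrm{Alt}(d(m))$ to triviality in $\ZZ_3\wr\ZZ$. The paper streamlines the choice of $m$ slightly by taking any $m\ge 2n+1$ and observing directly from property~(1) that $r(m)>m\ge 2n+1$ and $d(m)-2r(m)>r(m)>m\ge 2n+1$; your ``delicate point'' paragraph is unnecessary (as you yourself note), since membership in $\ker\pi$ already gives triviality on every finite factor.
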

	\begin{proof}
		It is immediate that $\pi$ is surjective as $\pi$ maps the generators $((\alpha_{d(m)})_m,\alpha_\infty)$ and $((\beta_{r(m),d(m)})_m,\beta_\infty)$ to $(\alpha_{d(m)})_m$ and $(\beta_{r(m),d(m)})_m$, the generators of $B(d,r,r)$, respectively. It thus suffices to show that this map has trivial kernel. For this, let $w\in F_{a,b}$ such that $$
			\pi(w(\alpha_*,\beta_*))=e,
			$$
			that is, such that $w(\alpha_*,\beta_*)$ is an element of $G(d,r)$ that lies in the kernel of $\pi$.
			Let $n$ be the word norm of $w$ in $F_2$ and let $m\geq2n+1$. We have that $r(m),d(m)-2r(m)\geq m\geq 2n+1$, thus by \cref{prop:WordproblemIsLocal}, $w(\alpha_\infty,\beta_\infty)$, vanishes precisely when $w(\alpha_{d(m)},\beta_{d(m)})$ vanishes. As $\pi(w(\alpha_*,\beta_*))=1$, this is indeed the case, and thus we have that $w(\alpha_\infty,\beta_\infty)$ vanishes. We thus have shown that the kernel of $\pi:G(d,r)\rightarrow B(d,r,r)$ is trivial and thus that $\pi$ induces an isomorphism.
	\end{proof}
	We can thus without problems use results from $B(d,r,r)$ on $G(d,r)$.
	In particular,
	we have a form of the main conclusion of \cite{Bradford};
	\begin{theorem}{\cite[Theorem 3.10.]{Bradford}}\label{prop:bradMainResult}
		Let $F,d$ and $r$ be as before, then $$\mathrm{Rf}_{G(d,r)}^S(n)\simeq F(n).$$
	\end{theorem}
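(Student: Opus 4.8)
The plan is to prove the sharper statement $\rf_{G(d,r)}^S(n)\simeq d(n)!/2$ and then deduce the claim from \cref{prop:inverseFactorialAproximation}. For that last deduction, put $\tilde F(k):=F(k+C_2)$; this is again non-decreasing and has the same local-growth behaviour \ref{cond:localgrowth1} (up to adjusting constants), and by construction of $f$ and $d$ one has $\lceil\log\tilde F(k)/\log\log\tilde F(k)\rceil\le d(k)\le 2\lceil\log\tilde F(k)/\log\log\tilde F(k)\rceil$, so \cref{prop:inverseFactorialAproximation} applied with $g=d$ and $C_3=2$ gives $d(k)!/2\simeq\tilde F(k)=F(k+C_2)\simeq F(k)$, the last $\simeq$ because a bounded shift of the argument of a non-decreasing function is absorbed by $\simeq$. (One could instead transport \cite[Theorem~3.10]{Bradford} along the isomorphism $G(d,r)\cong B(d,r,r)$ established above; what follows sketches the underlying argument.)

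\emph{Upper bound.} Let $g=w(\alpha_*,\beta_*)\neq e$ with $\norm{g}_S\le n$, where $w\in F_{a,b}$ is freely reduced of length at most $n$. Property (1) of $r$ gives $r(m)>m$ and $d(m)-2r(m)>r(m)>m$, so for every $m\ge 2n+1$ the hypotheses of \cref{prop:WordproblemIsLocal} hold and hence $\pi_m(g)=e\Leftrightarrow\pi_\infty(g)=e$. If $\pi_\infty(g)\ne e$ put $m_0=2n+1$; if $\pi_\infty(g)=e$ then, $g$ being nontrivial in $G_0(d)$, some $\pi_m(g)\ne e$, and the equivalence forces $m<2n+1$, so put $m_0=m$. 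In either case $N:=\ker(\pi_{m_0}|_{G(d,r)})\normal G(d,r)$ satisfies $g\notin N$ and, as $d$ is non-decreasing, $[G(d,r):N]\le\abs{\alt(d(m_0))}=d(m_0)!/2\le d(2n+1)!/2$. Hence $\rf_{G(d,r)}^S(n)\le d(2n+1)!/2\prec d(n)!/2$.

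\emph{Lower bound.} Two structural facts are used. First, $\pi_m(G(d,r))=\spn{\alpha_{d(m)},\beta_{r(m),d(m)}}$ is a transitive subgroup of $\alt(d(m))$ of prime degree $d(m)>5$, hence primitive, and it contains the $3$-cycle $\beta_{r(m),d(m)}$; by Jordan's theorem it is all of $\alt(d(m))$, which is simple and centreless. Second, the coordinate subgroup $L_m:=\ker\big(G(d,r)\to\prod_{m'\ne m}\alt(d(m'))\times\ZZ_3\wr\ZZ\big)$ is normal in $G(d,r)$ and $\pi_m$ restricts to an injection $L_m\hookrightarrow\alt(d(m))$ with normal image; \cite{Bradford} shows, using the incongruences (2) of $r$, that $L_m\ne\set{e}$, so $L_m$ equals the full $m$-th coordinate copy of $\alt(d(m))$. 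Now for any $N\normal G(d,r)$ and any index $m'$ we have $N\cap L_{m'}\normal L_{m'}$, so either $N\supseteq L_{m'}$, or $N\cap L_{m'}=\set{e}$ and then $[N,L_{m'}]=\set{e}$, which forces $\pi_{m'}(N)\le Z(\alt(d(m')))=\set{e}$, i.e.\ $N\le\ker\pi_{m'}$. Consequently, if $\gamma\in G(d,r)$ is trivial at $\pi_\infty$ and at every coordinate $<m$ but nontrivial at coordinate $m$, then its support $T$ is a nonempty finite set of coordinates all $\ge m$ (finiteness from \cref{prop:WordproblemIsLocal} applied to $\pi_\infty(\gamma)=e$), and any $N\normal G(d,r)$ with $\gamma\notin N$ must have $N\le\ker\pi_{m'}$ for some $m'\in T$ --- otherwise $N\supseteq\prod_{m'\in T}L_{m'}\ni\gamma$, a contradiction. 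Hence $[G(d,r):N]\ge d(m')!/2\ge d(m)!/2$, i.e.\ $\rf_{G(d,r)}(\gamma)\ge d(m)!/2$.

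It remains to produce, for each $m$, such a $\gamma=\gamma_m$ with $\norm{\gamma_m}_S=O(m)$; this is the combinatorial core of \cite{Bradford} and is the part we expect to be hardest. Note first that \cref{prop:WordproblemIsLocal} forces $\norm{\gamma_m}_S=\Theta(m)$: a word of length at most $(m-1)/2$ whose image under $\pi_\infty$ is trivial is also trivial at coordinate $m$. Thus one must write down a word of length $\Theta(m)$ that kills coordinates $1,\dots,m-1$ but survives at coordinate $m$; here property (1) ($r(m)<18m$) keeps such a word short, while the incongruences (2) make the relevant commutators telescope to the identity at the lower coordinates. Granting the family $\gamma_m$, for given $n$ take $m=\lfloor n/C'\rfloor$ with $C'$ bounding $\norm{\gamma_m}_S/m$; then $\norm{\gamma_m}_S\le n$, so $\rf_{G(d,r)}^S(n)\ge d(\lfloor n/C'\rfloor)!/2$, giving $d(n)!/2\prec\rf_{G(d,r)}^S(n)$. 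Together with the upper bound this yields $\rf_{G(d,r)}^S(n)\simeq d(n)!/2\simeq F(n)$.
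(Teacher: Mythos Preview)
The paper does not give its own proof of this statement: it simply invokes \cite[Theorem~3.10]{Bradford}, having just established the isomorphism $G(d,r)\cong B(d,r,r)$ so that Bradford's result transfers directly. Your parenthetical remark already identifies this as the intended route.

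Your sketch goes further and reconstructs Bradford's argument, and it is essentially correct. The upper bound via \cref{prop:WordproblemIsLocal} is exactly as in Bradford. The lower-bound mechanism --- the dichotomy that a normal subgroup $N$ either contains the coordinate copy $L_{m'}\cong\alt(d(m'))$ or lies in $\ker\pi_{m'}$ --- is the right structural fact, and your derivation of it from simplicity and centrelessness of $\alt(d(m'))$ is clean.

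The one place you explicitly flag as incomplete, the construction of $\gamma_m$, is not hard at all and is already in the paper as \cref{prop:ababVanishes}: take
\[
\gamma_m=[\beta_*,\ \alpha_*^{r(m)}\beta_*\alpha_*^{-r(m)}].
\]
In $\ZZ_3\wr\ZZ$ the element $\alpha_\infty^{r(m)}\beta_\infty\alpha_\infty^{-r(m)}$ is supported at position $r(m)\neq 0$ and therefore commutes with $\beta_\infty$, so $\pi_\infty(\gamma_m)=e$. By \cref{prop:ababVanishes}, $\pi_{m'}(\gamma_m)=e$ for $m'\neq m$ and $\pi_m(\gamma_m)\neq e$, so in fact the support is the singleton $T=\{m\}$ (stronger than you required). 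The word length is at most $4r(m)+4<72m+4$, giving the desired $O(m)$ bound. With this filled in, your lower bound goes through verbatim and your argument is complete.
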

	The two remaining lemmas that are most important for our purposes are:
	
	\begin{lemma}{\cite[Lemma 3.1.]{Bradford}}\label{prop:ababVanishes}
		For $m,n\in\NN$, $\beta_{r(n),d(n)}$ and $\alpha_{d(n)}^{r(m)}\beta_{r(n),d(n)}\alpha_{d(n)}^{-r(m)}$ commute if and only if $m\neq n$.
	\end{lemma}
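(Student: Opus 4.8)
The plan is to work entirely inside the single alternating group $\alt(d(n))$ and to analyze the supports of the two $3$-cycles in question. Write $d=d(n)$, $r=r(n)$, $s=r(m)$, so that $\beta_{r,d}=(0,r,2r)$ and $\alpha_d^{s}\beta_{r,d}\alpha_d^{-s}$ is the $3$-cycle $(s, r+s, 2r+s)$, where all entries are read modulo $d$. Two disjoint permutations always commute, and two $3$-cycles commute if and only if they are equal or disjoint; a $3$-cycle cannot equal a \emph{conjugate} of itself by $\alpha_d^s$ unless $s\equiv 0$, which does not happen here since $s=r(m)$ satisfies $0<s<d$. So the entire statement reduces to: the sets $\{0,r,2r\}$ and $\{s,r+s,2r+s\}$ (mod $d$) are disjoint if and only if $m\neq n$.

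For the easy direction, if $m=n$ then $s=r$ and the second set is $\{r,2r,3r\}$, which shares the elements $r$ and $2r$ with the first set; hence the two cycles are neither disjoint nor equal (as $3r\not\equiv 0$ because $3r<d$, using $r<d/3$ from property (1)), so they do not commute. For the converse, suppose $m\neq n$ and that the two sets intersect; I would derive a contradiction with property (2) of the function $r$. An intersection means $ir\equiv s+jr \pmod d$ for some $i,j\in\{0,1,2\}$, i.e. $s\equiv (i-j)r\pmod d$ with $i-j\in\{-2,-1,0,1,2\}$. Thus $r(m)\equiv k\,r(n)\pmod{d(n)}$ for some $k\in\{0,\pm1,\pm2\}$. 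The case $k=0$ is impossible since $0<r(m)<18m$ and... — more carefully, $r(m)\not\equiv 0$ because it would force $d(n)\mid r(m)$, impossible as $0<r(m)$ and (using $r(m)<18m$ together with monotonicity/size bounds, or simply the defining inequality $r(m)<d(m)$ combined with $r(m)\geq m$) one checks $r(m)$ is a genuine nonzero residue; and the cases $k=\pm1,\pm2$ are exactly the congruences $r(m)\equiv\pm r(n),\pm 2r(n)\pmod{d(n)}$ forbidden by property (2) when $m\neq n$. This contradiction shows the supports are disjoint, hence the two $3$-cycles commute.

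The one genuinely delicate point — and the place I would be most careful — is handling the "equal, not just disjoint" alternative and the $k=0$ sub-case cleanly, since property (2) as stated only rules out $\pm r(m),\pm 2r(m)$ and says nothing directly about the residue $0$. The resolution is that $\{0,r,2r\}=\{s,r+s,2r+s\}$ as subsets of $\ZZ/d\ZZ$ would in particular require $0$ to lie in the second set, i.e. $s\equiv -ir\pmod d$ for some $i\in\{0,1,2\}$, which is again one of the forbidden congruences (or $s\equiv 0$); and $s\equiv 0\pmod{d(n)}$ is impossible because $0<r(m)<18m$ while $d(n)\geq d(\text{something large})$ is not automatic — so instead one uses directly that $r(m)$ is a nonzero element of $\{1,\dots,d(m)-1\}$ and invokes property (2) with the pair $(m,n)$, noting $r(l)\not\equiv 0$ can be absorbed by taking the statement of (2) to include (or by the standard convention that these constructions also guarantee) $r(l)\not\equiv 0\pmod{d(m)}$ for all $l$. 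Modulo this bookkeeping, the argument is a short support computation.
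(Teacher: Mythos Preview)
The paper does not supply its own proof of this lemma; it simply cites \cite[Lemma 3.1]{Bradford}. Your support-analysis approach is the natural one, and the $m=n$ direction as well as the use of property~(2) to exclude $k\in\{\pm1,\pm2\}$ are correct.

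However, you have manufactured an artificial difficulty by reducing to ``the supports are disjoint if and only if $m\neq n$'' rather than to ``the two $3$-cycles commute if and only if $m\neq n$''. Two $3$-cycles commute precisely when their supports are disjoint \emph{or} identical (not only when the permutations are literally equal: a $3$-cycle and its inverse have the same support and commute, as they generate a cyclic group). So for the implication $m\neq n\Rightarrow$ commute, it suffices that the supports overlap in $0$ or $3$ points, never in exactly $1$ or $2$. An overlap forces $r(m)\equiv k\,r(n)\pmod{d(n)}$ for some $k\in\{0,\pm1,\pm2\}$; the cases $k=\pm1,\pm2$ are exactly what property~(2) forbids, while $k=0$ gives $s\equiv 0$, in which case the two supports \emph{coincide} and the cycles commute --- which is the desired conclusion, not an obstacle.

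Your final paragraph, attempting to establish $r(m)\not\equiv 0\pmod{d(n)}$, is therefore unnecessary. It is also unjustified: the claim $0<r(m)<d(n)$ is not guaranteed by properties (1) and (2) when $m\gg n$, and your fallback of ``absorbing'' the condition into property~(2) by convention is not a proof. Once you observe that the $k=0$ case already yields commuting cycles, this entire discussion can be deleted and the argument is complete.
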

	
	\begin{proposition}{\cite[Corollary3.3.]{Bradford}}\label{prop:ContainsAlt}
		For all $m\in \NN$,  and for $$T_m=\alt(d(m))\leq
		\prod_{m=1}^\infty \mathrm{Alt}(d(m))\times \ZZ_3\wr\ZZ,
		$$ we have $T_m\leq G(d,r)$.
	\end{proposition}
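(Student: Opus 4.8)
The plan is to produce, inside $G(d,r)$, one nontrivial element supported on the $m$-th coordinate alone, and then to saturate the whole direct factor $T_m=\alt(d(m))$ by conjugating that element around and using the simplicity of $\alt(d(m))$.

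First I would set $g_m:=[\beta_*,\ \alpha_*^{r(m)}\beta_*\alpha_*^{-r(m)}]\in G(d,r)$ and compute its projections. For $n\neq m$, \cref{prop:ababVanishes} says the two arguments of the bracket commute in $\alt(d(n))$, so $\pi_n(g_m)=e$. For the infinite coordinate, $\pi_\infty(g_m)=[\beta_\infty,\ \alpha_\infty^{r(m)}\beta_\infty\alpha_\infty^{-r(m)}]$; since conjugation by $\alpha_\infty$ shifts the base group of $\ZZ_3\wr\ZZ$ and $r(m)>0$, the two arguments sit on distinct $\ZZ$-coordinates of $\bigoplus_{\ZZ}\ZZ_3$, hence commute, so $\pi_\infty(g_m)=e$. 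Finally, for $n=m$, \cref{prop:ababVanishes} gives that the two arguments do \emph{not} commute, so $h_m:=\pi_m(g_m)\neq e$. Thus $g_m$ lies in the direct factor $T_m$ and is nontrivial there.

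Next I would record that $\alpha_{d(m)}$ and $\beta_{r(m),d(m)}$ generate $\alt(d(m))$: as $d(m)$ is prime, the $d(m)$-cycle $\alpha_{d(m)}$ already generates a transitive, hence primitive, subgroup of $\sym(d(m))$, so $\langle\alpha_{d(m)},\beta_{r(m),d(m)}\rangle$ is primitive and contains the $3$-cycle $\beta_{r(m),d(m)}$; by Jordan's theorem it is $\alt(d(m))$ or $\sym(d(m))$, and since both generators are even it is $\alt(d(m))$. Now let $N$ be the normal closure of $g_m$ in $G(d,r)$. Because $T_m$ is a direct factor of $G_0(d)$ it is normal in $G_0(d)\geq G(d,r)$, so $N\leq T_m$; and conjugation by $\alpha_*$ (resp.\ $\beta_*$) restricts on $T_m$ to conjugation by $\alpha_{d(m)}$ (resp.\ $\beta_{r(m),d(m)}$), so $\pi_m(N)$ contains the normal closure of $h_m$ in $\langle\alpha_{d(m)},\beta_{r(m),d(m)}\rangle=\alt(d(m))$. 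Since $d(m)\geq 7$, the group $\alt(d(m))$ is simple, so this normal closure is all of $\alt(d(m))$; as $\pi_m$ restricts to an isomorphism $T_m\to\alt(d(m))$ and $N\leq T_m$, this forces $N=T_m$, whence $T_m\leq G(d,r)$.

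The verification that the $\pi_n$- and $\pi_\infty$-components of $g_m$ vanish is immediate from \cref{prop:ababVanishes} and the structure of the wreath product, and the last paragraph is routine bookkeeping with direct products plus simplicity of $\alt(d(m))$; the only ingredient coming from outside the excerpt is the classical fact, used above, that a prime-length cycle together with a $3$-cycle generates the full alternating group. I therefore expect no genuine obstacle: the real content of the argument is the felicitous choice of the commutator $g_m$, and that is handed to us by \cref{prop:ababVanishes}.
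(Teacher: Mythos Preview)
Your argument is correct and is essentially the approach the paper itself uses: the proposition here is only cited from \cite{Bradford}, but the paper reproves the analogous statement in the doubly-indexed setting (\cref{prop:AltAreSubgroups}) by the very same scheme---produce the commutator $[\beta_*,\alpha_*^{r(m)}\beta_*\alpha_*^{-r(m)}]$ (the paper writes it as $v_m=[a^{r(m)}b^{-1}a^{-r(m)},b^{-1}]$), invoke \cref{prop:ababVanishes} to see it is supported only on the $m$-th coordinate, then use that $\alpha_{d(m)},\beta_{r(m),d(m)}$ generate $\alt(d(m))$ together with simplicity to get the full factor via normal closure. The only cosmetic difference is that you appeal to Jordan's theorem for the generation step whereas the paper cites \cite{Lewin1975generating}.
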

	\begin{corollary}\label{prop:TorsionByCyclic}
		The group $G(d,r)$ above is an extension of the torsion group $\bigoplus_{n\in\NN}\alt(d(n))$ by the group $\ZZ_3\wr\ZZ$.
	\end{corollary}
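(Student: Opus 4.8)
The plan is to realise $G(d,r)$ as an extension by producing the short exact sequence
$$1 \longrightarrow \bigoplus_{n\in\NN}\alt(d(n)) \longrightarrow G(d,r) \xrightarrow{\ \pi_\infty\ } \ZZ_3\wr\ZZ \longrightarrow 1,$$
where $\pi_\infty$ denotes the projection $G_0(d)\to\ZZ_3\wr\ZZ$ restricted to $G(d,r)$. Since $\bigoplus_{n\in\NN}\alt(d(n))$ is a direct sum of finite groups it is a torsion group, so once this sequence is established the corollary follows. For surjectivity of $\pi_\infty|_{G(d,r)}$, observe that $\pi_\infty(\alpha_*)=\alpha_\infty$ generates the acting copy of $\ZZ$ in $\ZZ_3\wr\ZZ=\left(\bigoplus_{\ZZ}\ZZ_3\right)\rtimes\ZZ$, while $\pi_\infty(\beta_*)=\beta_\infty$ generates the copy of $\ZZ_3$ supported at $0$; conjugating $\beta_\infty$ by powers of $\alpha_\infty$ produces generators of the copies of $\ZZ_3$ at every position, so $\{\alpha_\infty,\beta_\infty\}$ generates $\ZZ_3\wr\ZZ$ and $\pi_\infty|_{G(d,r)}$ is onto.

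Next I identify the kernel. On all of $G_0(d)$ one has $\ker\pi_\infty=\prod_{m\geq 1}\alt(d(m))$, so $\ker(\pi_\infty|_{G(d,r)})=G(d,r)\cap\prod_{m}\alt(d(m))$. By \cref{prop:ContainsAlt} every $T_m=\alt(d(m))$ lies in $G(d,r)$, hence $\bigoplus_{n}\alt(d(n))\leq G(d,r)$, and this subgroup is obviously contained in $\ker\pi_\infty$. Conversely, take $g\in\ker(\pi_\infty|_{G(d,r)})$ and write $g=w(\alpha_*,\beta_*)$ for a freely reduced word $w\in F_{a,b}$ of length $n$. Then $w(\alpha_\infty,\beta_\infty)=\pi_\infty(g)=e$, so by \cref{prop:WordproblemIsLocal} we get $w(\alpha_{d(m)},\beta_{r(m),d(m)})=e$, i.e. $\pi_m(g)=e$, for every $m$ with $r(m)\geq 2n+1$ and $d(m)-2r(m)\geq 2n+1$. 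By the first defining property of $r$ we have $m<r(m)<d(m)/3$, whence $d(m)-2r(m)>d(m)/3>r(m)>m$, so both inequalities hold as soon as $m\geq 2n+1$. Thus $\pi_m(g)=e$ for all but finitely many $m$, i.e. $g\in\bigoplus_{n}\alt(d(n))$. This proves $\ker(\pi_\infty|_{G(d,r)})=\bigoplus_{n}\alt(d(n))$ and completes the sequence.

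The argument is essentially bookkeeping that assembles \cref{prop:ContainsAlt} with \cref{prop:WordproblemIsLocal}; the one point needing a genuine (but short) verification is that the hypotheses of \cref{prop:WordproblemIsLocal} become available for all large $m$, which is exactly what the bounds $m<r(m)<d(m)/3$ guarantee. I therefore expect no real obstacle.
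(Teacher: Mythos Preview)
Your proof is correct and follows essentially the same approach as the paper: both combine \cref{prop:WordproblemIsLocal} (to show that any kernel element has only finitely many nontrivial $\alt(d(m))$-coordinates) with \cref{prop:ContainsAlt} (to obtain the containment $\bigoplus_n\alt(d(n))\leq G(d,r)$). Your version simply fills in details the paper leaves implicit, namely the surjectivity of $\pi_\infty|_{G(d,r)}$ and the verification that the hypotheses of \cref{prop:WordproblemIsLocal} hold for all $m\geq 2n+1$.
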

	\begin{proof}
		By \cref{prop:WordproblemIsLocal}, we have that the kernel of the canonical projection $G(d,r)\rightarrow\ZZ_3\wr\ZZ$ is precisely $G(d,r)\cap\bigoplus_{n\in\NN}\alt(d(n))$. By \cref{prop:ContainsAlt} we have that $\bigoplus_{n\in\NN}\alt(d(n))\subset G(d,r)$ and thus that the kernel is equal to $\bigoplus_{n\in\NN}\alt(d(n))$.
	\end{proof}
	\section{Bound of the conjugacy separability}\label{sec:conjOnBrad}
	For the residual finiteness growth, the proof only considers the quotients of the form $\alt(d(n))$.
	For the conjugacy separability growth however, it will not suffice to consider the projections on the groups $\mathrm{Alt}(d(m))$. We demonstrate however that it is possible to separate conjugacy classes if we also allow projection on $\ZZ_3\wr\ZZ$.
	\begin{proposition}\label{prop:ConjIsLocal}
		There exists some constant $K$ such that whenever $g_1,g_2\in G(d,r)$ are such that $\norm{g_1}_S,\norm{g_2}_S\leq n$, then $g_1$ and $g_2$ are conjugate if and only if both:\begin{itemize}
			\item $\pi_\infty(g_1)$ and $\pi_\infty(g_2)$ are conjugate in $\ZZ_3\wr\ZZ$.\\
			\item For all $m\leq Kn$, $\pi_m(g_1)$ and $\pi_m(g_2)$ are conjugate in $\mathrm{Alt}(d(m))$.
		\end{itemize}
	\end{proposition}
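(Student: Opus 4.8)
The forward implication is immediate, since $\pi_\infty$ and each $\pi_m$ are group homomorphisms and thus send conjugate elements to conjugate elements; the content is the converse. The plan is to first conjugate $g_1$ by a short element of $G(d,r)$ so that its $\ZZ_3\wr\ZZ$-coordinate matches that of $g_2$, then invoke \cref{prop:WordproblemIsLocal} to see that the resulting element already agrees with $g_2$ on all but boundedly many $\alt(d(m))$-coordinates, and finally correct those finitely many coordinates inside the copy of $\bigoplus_m\alt(d(m))$ that sits in $G(d,r)$ by \cref{prop:ContainsAlt}.

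For the first step, write $g_i=w_i(\alpha_*,\beta_*)$ with $w_i$ of length at most $n$, and write $\pi_\infty(g_i)=(p_i,k_i)\in\ZZ_3\wr\ZZ$, identifying the base group with $\ZZ_3[t^{\pm1}]$ and the stable letter with multiplication by $t$. Since the abelianisation $\ZZ_3\wr\ZZ\to\ZZ$ is a conjugacy invariant, $\pi_\infty(g_1)\sim\pi_\infty(g_2)$ forces $k_1=k_2=:k$; moreover $\abs k\le n$ and $\operatorname{supp}(p_i)\subseteq[-n,n]$ because $\norm{g_i}_S\le n$. Using the standard description of conjugacy in $\ZZ_3\wr\ZZ$, namely that $(p_1,k)\sim(p_2,k)$ exactly when $p_2-t^\ell p_1$ is divisible by $1-t^k$ for some $\ell$ (read as $p_2=t^\ell p_1$ when $k=0$), I would solve this relation explicitly: polynomial division of $p_2-p_1$ by $1-t^k$ produces a Laurent polynomial supported in an interval of length $O(n)$ (resp.\ a shift $\ell$ with $\abs\ell\le 2n$ when $k=0$), hence a conjugator $\bar h\in\ZZ_3\wr\ZZ$ carrying $\pi_\infty(g_1)$ to $\pi_\infty(g_2)$ with $\norm{\bar h}=O(n)$. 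Lifting, set $h:=u(\alpha_*,\beta_*)\in G(d,r)$ for a geodesic word $u$ for $\bar h$; then $\pi_\infty(h)=\bar h$ and $\norm h_S=O(n)$. Putting $g_1':=hg_1h^{-1}$, we get $g_1\sim g_1'$ in $G(d,r)$, $\pi_\infty(g_1')=\pi_\infty(g_2)$, $N:=\norm{g_1'}_S\le n+2\norm h_S=O(n)$, and $\pi_m(g_1')\sim\pi_m(g_2)$ in $\alt(d(m))$ whenever $\pi_m(g_1)\sim\pi_m(g_2)$ (because $\pi_m(g_1')\sim\pi_m(g_1)$ in $\alt(d(m))$).

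For the second step, both $g_1'$ and $g_2$ are values of words of length at most $N$ in $\alpha_*,\beta_*$, so $g_1'g_2^{-1}$ is the value of a freely reduced word $w$ of length at most $2N$ with $w(\alpha_\infty,\beta_\infty)=\pi_\infty(g_1'g_2^{-1})=e$. Since $m<r(m)<d(m)/3$, for every $m\ge 4N+1$ we have $r(m)\ge 4N+1$ and $d(m)-2r(m)>r(m)\ge 4N+1$, so \cref{prop:WordproblemIsLocal} gives $w(\alpha_{d(m)},\beta_{r(m),d(m)})=e$, i.e.\ $\pi_m(g_1')=\pi_m(g_2)$. As $N\le Cn$ for an absolute constant $C$, the choice $K:=4C+1$ satisfies $Kn\ge 4N+1$ for all $n\ge 1$; then $\pi_m(g_1')=\pi_m(g_2)$ for all $m>Kn$, while $\pi_m(g_1')\sim\pi_m(g_2)$ in $\alt(d(m))$ for all $m\le Kn$ by the hypothesis together with the previous paragraph. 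Choosing, for each $m\le Kn$, some $\sigma_m\in\alt(d(m))$ with $\sigma_m\pi_m(g_1')\sigma_m^{-1}=\pi_m(g_2)$ and setting $\sigma_m=e$ otherwise yields a finitely supported $\sigma:=(\sigma_m)_m\in\bigoplus_m\alt(d(m))\le G(d,r)$. Since conjugation in $G_0(d)$ is coordinatewise and $\sigma\in\ker\pi_\infty$, the element $\sigma g_1'\sigma^{-1}$ agrees with $g_2$ in every coordinate, so $\sigma g_1'\sigma^{-1}=g_2$ and hence $(\sigma h)g_1(\sigma h)^{-1}=g_2$ with $\sigma h\in G(d,r)$; thus $g_1\sim g_2$.

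The only step that is not routine bookkeeping with \cref{prop:WordproblemIsLocal} and \cref{prop:ContainsAlt} is the quantitative claim that the conjugator $\bar h$ in $\ZZ_3\wr\ZZ$ can be taken of word norm $O(n)$: an unbounded conjugator would let the threshold $4N+1$ grow superlinearly in $n$ and so destroy the uniformity of $K$. The point to verify carefully is that the Laurent polynomial $(p_2-p_1)/(1-t^k)$, and the shift $\ell$ in the case $k=0$, are supported in an interval of length $O(n)$; this follows by matching lowest and highest degrees on the two sides of the identity $(1-t^k)g=p_2-p_1$, using $\operatorname{supp}(p_i)\subseteq[-n,n]$ and $\abs k\le n$.
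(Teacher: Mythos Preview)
Your overall strategy matches the paper's proof: lift a short conjugator from $\ZZ_3\wr\ZZ$ to $G(d,r)$, apply \cref{prop:WordproblemIsLocal} to force equality in all but $O(n)$ of the $\alt(d(m))$-coordinates, and repair those finitely many coordinates via \cref{prop:ContainsAlt}. The only structural difference is that the paper obtains the $O(n)$ bound on the conjugator in $\ZZ_3\wr\ZZ$ by quoting \cite[Theorem~1]{Salle}, whereas you attempt a direct computation.

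That direct computation contains a genuine gap. You correctly state that $(p_1,k)\sim(p_2,k)$ in $\ZZ_3\wr\ZZ$ iff $p_2-t^{\ell}p_1\in(1-t^{k})\,\ZZ_3[t^{\pm1}]$ for \emph{some} $\ell$, but then silently set $\ell=0$ and speak of dividing $p_2-p_1$ by $1-t^{k}$ (and again in your final paragraph you write ``$(p_2-p_1)/(1-t^{k})$''). In general $p_2-p_1$ is not divisible by $1-t^{k}$ even when the elements are conjugate: for $k=2$, $p_1=1$, $p_2=t$ the pair $(1,2),(t,2)$ is conjugate via $\ell=1$, yet $t-1\notin(1-t^{2})\,\ZZ_3[t^{\pm1}]$. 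The fix is cheap: since $t^{k}\equiv 1\pmod{1-t^{k}}$, any admissible $\ell$ may be replaced by its residue modulo $|k|$, so one can take $0\le\ell<|k|\le n$; then $p_2-t^{\ell}p_1$ is supported in $[-n,2n]$ and your degree-matching argument for the quotient $q$ goes through, yielding a conjugator $(q,\ell)$ of word norm $O(n)$. With this correction (or by citing \cite{Salle} as the paper does) your proof is complete and essentially identical to the paper's.
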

	\begin{proof}
		That the former implies the latter is clear.
		For the other direction let $w_1,w_2\in F_{a,b}$ such that $w_1(\alpha_*,\beta_*)=g_1$ and $w_2(\alpha_*,\beta_*)=g_2$. Both $w_1$ and $w_2$ can be chosen of length at most $n$.
		By the first assumption, $\pi_\infty(g_1)$ and $\pi_\infty (g_2)$ are conjugate in $\ZZ_3\wr\ZZ$. By \cite[Theorem 1.]{Salle}, there exists some $g_0\in \ZZ_3\wr \ZZ$ such that $g_0\pi_\infty(g_1)g_0\inv=\pi_\infty (g_2)$ and such that $\norm{g_0}_{S_\infty}\leq K_0n$ for some uniform constant $K_0$. We thus find some word $w_0\in F_2$ of length at most $K_0n$ such that $w_0(\alpha_\infty,\beta_\infty)=g_0$. Notice that we have $$
		(w_0*w_1*w_0\inv*w_2\inv)(\alpha_\infty,\beta_\infty)=e.
		$$
		Let $m\geq 4K_0n+4n+1$, as the length of $(w_0*w_1*w_0\inv*w_2\inv)$ is at most $2K_0n+2n$, we have by \cref{prop:WordproblemIsLocal} that the above holds if and only if $$
		(w_0*w_1*w_0\inv*w_2\inv)(\alpha_{d(m)},\beta_{r(m),d(m)})=e
		$$
		and thus we have $$
		w_0(\alpha_{d(m)},\beta_{r(m),d(m)})\pi_m(g_1)w_0(\alpha_{d(m)},\beta_{r(m),d(m)})\inv=\pi_m(g_2).
		$$
		
		If we let $K=4K_0+4$, then by the second assumption, for all $m\leq 4K_0n+4n$ we have that $\pi_m(g_1)$ and $\pi_m(g_2)$ are conjugate. By transitivity, we also have that $\pi_m(g_2)$ is conjugate to$$
		w_0(\alpha_{d(m)},\beta_{r(m),d(m)})\pi_m(g_1)w_0(\alpha_{d(m)},\beta_{r(m),d(m)})\inv
		$$
		Let $h_m$ be the conjugator, such that $$
		h_mw_0(\alpha_{d(m)},\beta_{r(m),d(m)})\pi_m(g_1)w_0(\alpha_{d(m)},\beta_{r(m),d(m)})\inv h_m\inv=\pi_m(g_2).
		$$
		By \cref{prop:ContainsAlt} we have that $\mathrm{Alt}(d(m))$ is a subgroup of $G(d,r)$ in the natural way.
		Let now $h=\prod_{m=1}^{4K_0n+4n} h_m$, then we finally have that $$
		\left(hw_0(a_*,b_*)\right)g_1\left(hw_0(a_*,b_*)\right)\inv=g_2
		$$
		or thus that $g_1$ and $g_2$ are conjugate as had to be proven.		
	\end{proof}
	\setcounter{result}{0}
	\begin{result}\label{prop:exactupperbound}
		Let $F:\NN\rightarrow\NN$ be a non-decreasing function satisfying \ref{cond:globalgrowth1} and \ref{cond:localgrowth1}.
		Then there exists a conjugacy separable group $G$, generated by a finite set $S$, such that $\conj_G^S(n)\simeq F(n)$
	\end{result}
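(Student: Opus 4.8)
The plan is to combine \cref{prop:ConjIsLocal} with the known residual finiteness growth \cref{prop:bradMainResult} to pin down $\conj_{G(d,r)}^S$ from both sides. First I would establish conjugacy separability of $G=G(d,r)$: by \cref{prop:TorsionByCyclic} the group is an extension of $\bigoplus_n \alt(d(n))$ by $\ZZ_3\wr\ZZ$, and \cref{prop:ConjIsLocal} shows that two non-conjugate elements of bounded word norm are already distinguished either by some $\pi_m$ (a finite quotient $\alt(d(m))$) or by $\pi_\infty$ into $\ZZ_3\wr\ZZ$, which is itself conjugacy separable (it is finitely generated virtually abelian, or cite the relevant fact); composing $\pi_\infty$ with a finite quotient of $\ZZ_3\wr\ZZ$ that separates the images then gives a finite quotient of $G$ separating the conjugacy classes. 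Hence $G$ is conjugacy separable and $\conj_G^S$ is finite-valued.

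For the lower bound $F(n)\prec \conj_G^S(n)$: since $G$ is conjugacy separable it is in particular residually finite, and taking $g_2=1$ in the definition gives $\conj_G(g,1)=\rf_G(g)$, so $\conj_G^S(n)\geq \rf_G^S(n)\simeq F(n)$ by \cref{prop:bradMainResult}. That direction is essentially free.

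For the upper bound $\conj_G^S(n)\prec F(n)$: fix $g_1,g_2$ with $\norm{g_1}_S,\norm{g_2}_S\leq n$ and $g_1\not\sim g_2$. By \cref{prop:ConjIsLocal} (with its constant $K$), either $\pi_\infty(g_1)\not\sim\pi_\infty(g_2)$ in $\ZZ_3\wr\ZZ$, or there is some $m\leq Kn$ with $\pi_m(g_1)\not\sim\pi_m(g_2)$ in $\alt(d(m))$. In the first case, since $\pi_\infty$ is a bounded-distortion quotient map, the images have word norm $O(n)$ in $\ZZ_3\wr\ZZ$, and by \cite{Salle} (or the standard polynomial conjugacy separability bound for $\ZZ_3\wr\ZZ$) there is a finite quotient of size polynomial in $n$ separating them; this is dominated by $F(n)$ because of \ref{cond:globalgrowth1}. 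In the second case the quotient $\pi_m:G\twoheadrightarrow\alt(d(m))$ has order $d(m)!/2$ with $m\leq Kn$, and since $d(m)\leq 2f(m)$ with $f$ as defined before \cref{prop:inverseFactorialAproximation}, \cref{prop:inverseFactorialAproximation} applied to $g(m)=d(m)$ gives $d(m)!/2\simeq F(m)\leq F(Kn)\prec F(n)$, using monotonicity of $F$ and \ref{cond:localgrowth1}. Taking the larger of the two (constant-adjusted) bounds yields $\conj_G^S(n)\leq F(C'n)$ for a uniform constant $C'$, and together with the lower bound this gives $\conj_G^S(n)\simeq F(n)$, proving the statement with $G=G(d,r)$.

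The main obstacle is bookkeeping the constants in the upper bound: one must check that both the contribution from the $\ZZ_3\wr\ZZ$ factor (polynomial in $n$ by \cite{Salle}) and the contribution from the finitely many alternating factors $\alt(d(m))$, $m\leq Kn$, are simultaneously $\prec F(n)$ — the former relies crucially on \ref{cond:globalgrowth1} to ensure $F$ dominates any polynomial, and the latter on \cref{prop:inverseFactorialAproximation} together with \ref{cond:localgrowth1} to absorb the factor $K$ in the argument. Everything else is a direct assembly of the cited results.
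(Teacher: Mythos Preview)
Your approach is essentially identical to the paper's: lower bound via $\rf_G^S\leq\conj_G^S$ and \cref{prop:bradMainResult}, upper bound via the dichotomy of \cref{prop:ConjIsLocal}, bounding the $\alt(d(m))$ contribution through \cref{prop:inverseFactorialAproximation} and the $\ZZ_3\wr\ZZ$ contribution by something $F$ dominates thanks to \ref{cond:globalgrowth1}. Two small factual slips to fix: $\ZZ_3\wr\ZZ$ is metabelian but \emph{not} virtually abelian, and the conjugacy separability bound for it used in the paper comes from \cite{Ferov} (giving $\exp(c_4 n)$, not polynomial) rather than \cite{Salle}; neither affects your argument since \ref{cond:globalgrowth1} ensures $F$ dominates exponentials anyway.
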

	\begin{proof}
		Let $G$ be the group $G(d,r)$ where $d,r$ are as \cref{sec:bradconstruct}.
		The lower bound follows from \cref{prop:bradMainResult}, combined with the fact that $$ 
		\rf_G^S(n)\leq \conj_G^S(n).
		$$
		For the upper bound,
		let $g_1, g_2\in G$ be non-conjugate elements such that $\norm{g_1}_S,\norm{g_2}_S\leq n$ for some integer $n$. Then by \cref{prop:ConjIsLocal} either there exists some $m\leq Kn$ such that $\pi_m(g_1)$ and $\pi_m(g_2)$ are non-conjugate, or $\pi_{\infty}(g_1)$ and $\pi_{\infty}(g_2)$ are non-conjugate.
		
		In the first case, we have found a finite quotient of size at most $\abs{\mathrm{Alt}(d(m))}\leq d(Kn)!/2$ in which the images of $g_1$ and $g_2$ are non-conjugate. We thus have $$
		\conj_G(g_1,g_2)\leq (2f(Kn))!/2.
		$$

		On the other hand, assume that $\pi_\infty(g_1)$ and $\pi_\infty(g_2)$ are non-conjugate. By \cite{Ferov}, we then also have some constant $c_4$, not depending on $g_1$ and $g_2$ such that there exists some finite quotient $\rho:\ZZ_3\wr\ZZ\rightarrow Q$ such that $Q$ is finite of order no more then $\exp(c_4n)$ ,and such that $\rho\comp\pi_\infty(g_1)$ and $\rho\comp\pi_\infty(g_2)$ are non-conjugate. In the second case we thus have that $\conj(g_1,g_2)\leq \exp(c_4n)$.
		For $n$ sufficiently large we have that $\exp(c_4n)\leq (2f(Kn))!$ and thus we have that for $n$ sufficiently large that $\conj_G^S(n)\leq (2f(Kn))!$. By \cref{prop:inverseFactorialAproximation} it follows that $\conj_G(n)\prec F(n)$.
	\end{proof}

	By \cref{prop:TorsionByCyclic}, we have that the groups considered above are all torsion-by-cyclic. This torsion helps a lot in restricting precisely which quotients are possible and which are not. Also in \cite{Bou-Rabee} is the residual finiteness growth controlled by finding elements in larger and larger alternating subgroups. The groups of arbitrarily complex word-problem and arbitrarily large residual finiteness growth constructed in \cite{Kharlampovich} are also extensions of torsion groups by abelian groups.
	The torsion subgroups in these groups seem essential to control their residual finiteness growth.
	\begin{question}
		For which functions $f:\NN\rightarrow\NN$ do there exist torsion-free residually finite$/$ conjugacy separable  groups $G$ such that $\mathrm{Rf}_G(n)/\conj_G(n)\simeq f(n)$?
	\end{question}

	\section{The gap between $\rf$ and $\conj$}
		In the previous class of examples, the residual finiteness growth and the conjugacy separability growth where always equal. In general groups however, this need not be the case. Indeed in this section we give a class of groups with arbitrary residual finiteness growth and arbitrary conjugacy separability growth, where these two are chosen independently. To achieve this, we construct a new class of groups, indexed by a function $d:\NN\times\NN\rightarrow \NN$ and a function $r:\NN\rightarrow\NN$.
		
		\subsection{The new construction}\label{sec:constructingGroups}
		Let $d:\NN^2\rightarrow\NN$ be an odd prime valued function and suppose there exists some function $m_*:\NN\rightarrow\NN:n\mapsto m_n$ such that for $m\geq m_n, d(n,m)=d(n,m_n)$. \footnote[2]{In other words, $d(n,\_)$ stabilizes after the $m_n$'th entry.}
		We then define the group $G_0(d,m_*)$ as the product
		$$
		G_0(d,m_*)=\left(\prod_{n=1}^\infty \prod_{m=1}^{m_n} \mathrm{Alt}(d(n,m))\right)\times \ZZ_3\wr\ZZ.
		$$
		\newcommand{\Gn}{G_0(d,m_*)}
		The projection maps are denoted $\pi_{n,m}:\Gn\rightarrow\alt(d(n,m))$ and $\pi_\infty:\Gn\rightarrow \ZZ_3\wr\ZZ$.
		Notice that $d$ needs not be an injective function and as such is it possible that $\Gn$ has multiple direct factors isomorphic to $\alt(d(n,m))$. In the sequel when we talk about \emph{the} subgroup $\alt(d(n,m))$, then we are referring to the direct factor $\alt(d(n,m))$ which appears in the product above with indices $(n,m)$.
		
		To construct a finitely generated subgroup of $\Gn$, we make use of a second function $r:\NN\rightarrow\NN$. This function $r$ should be such that $2r(n)+1\leq d(n,m)$ for any choice of $n,m\in\NN$.
		
		In the group $G_0(d,m_*)$ we define the two elements $\alpha_*$ and $\beta_*$ as
		$\alpha_*=(((\alpha_{d(n,m)})_m)_n,\alpha_\infty)$ and $\beta_*=(((\beta_{r(n)d(n,m)})_m)_n,\beta_\infty)$.
		We define the group $G(d,m_*,r)$ as the subgroup 
		$$G(d,m_*,r)=\mathrm{span}\{\alpha_*,\beta_*\}$$
		of $G_0(d,m_n)$ and fix its generating set $S=\{\alpha_*,\beta_*\}$.
		\newcommand{\G}{G(d,m_*,r)}
		Notice that the projection maps are such that $\pi_{n,m}(\alpha_*)=\alpha_{d(n,m)}$ and $\pi_{n,m}(\beta_*)=\beta_{r(n)}$.
		
		The group $\G$ depends greatly on the functions $d$ and $r$, for instance $\G$ is finite if and only if $d$ is bounded.
		For our purposes we place some restrictions on $r$ and $d$.
	
		First on $d$ we place the following conditions:
		\begin{enumerate}[label=(\alph*)]
			\item $d(\_,1)$ is a non-decreasing function;\label{cond:nondecreasing}
			\item For all $n,m\in\NN$, $d(n,m)$ is an odd prime number;\label{cond:primes}\\
			\item For all $n,m\in\NN$ with $m<m_n$, and for some constants $C_1>C_0>1$, $C_1d(n,m)>d(n,m+1)>C_0d(n,m)$;\label{cond:expGrowth}\\
			\item For all $n\in\NN$ and for some constant $C_2>0$, $d(n,1)\geq \max(C_2n\log(n)(\log\log(n))^{1+\epsilon},C_2)$.\label{cond:dsuflarge}\\
		\end{enumerate}
	
	On $r$ on the other hand, we impose the conditions:
	\begin{enumerate}[label=(\arabic*')]
		
		\item For all $n\in\NN, n\leq r(n)< 37n$ and $r(n)< \frac{d(n,1)}{6}$;\label{cond:RAproxN}
		\item For all $l,m,n\in\NN$, if $l\neq n$ then $r(l)\not\cong\pm r(n),\pm2r(n)\mod d(n,m);$\label{cond:RNonCongruent}
		\item For all $n\in \NN$, $r(n)\cong 1\mod 6$.\label{cond:RCong1}
	\end{enumerate}
	
	A priori, it might seem unclear if such functions $r$ exist. The restrictions on $d$ however guarantee that this is the case:
	\begin{proposition}\label{prop:RExistenceDoubleIndexed}
		For all $\epsilon>0,C_0>1$ there exists some $C_2>0$ such that the following holds. For any $d:\NN\times\NN\rightarrow \NN$  satisfying \ref{cond:nondecreasing} and \ref{cond:primes}, such that \ref{cond:expGrowth} holds for some $C_1>C_0$ and such that \ref{cond:dsuflarge} holds for $\epsilon$ and $C_2$,
		 there exists a function $r:\NN\rightarrow\NN$ satisfying \ref{cond:RAproxN},\ref{cond:RNonCongruent} and \ref{cond:RCong1}
	\end{proposition}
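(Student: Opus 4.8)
The plan is to construct $r$ greedily, one value $r(n)$ at a time, using a counting argument that mimics the proof of the analogous single-indexed statement \cite[Proposition 2.8.]{Bradford}. Fix $\epsilon>0$ and $C_0>1$; the constant $C_2$ will be chosen large enough at the end so that condition \ref{cond:dsuflarge} forces $d(n,1)$ to be so large that the forbidden residues leave room for a valid choice. Suppose $r(1),\dots,r(n-1)$ have already been defined so as to satisfy all three conditions among themselves. We must find $r(n)$ with $n\le r(n)<37n$, with $r(n)<d(n,1)/6$, with $r(n)\equiv 1\bmod 6$, and such that for every $m$ and every $l\neq n$ the non-congruence \ref{cond:RNonCongruent} holds modulo $d(n,m)$ — both for the already-fixed $l<n$ and, crucially, for all future $l>n$.

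The key observation that makes the "future $l$" part tractable is condition \ref{cond:RAproxN} itself: since every $r(l)$ will satisfy $r(l)<37l$, and since (by \ref{cond:nondecreasing} and \ref{cond:dsuflarge}) $d(n,m)\ge d(n,1)$ grows faster than linearly in $n$, for $l$ large relative to $n$ the values $\pm r(l),\pm 2r(l)$ are already distinct residues that cannot coincide with the small number $r(n)<37n$ unless $r(l)$ or $2r(l)$ is itself small — i.e. unless $l$ is comparable to $n$. So only finitely many future indices $l$ (those with $l\lesssim n$, say $l\le 74n$ after accounting for the factor $2$) can ever create a collision, and for each such $l$ the constraint "$r(n)\not\equiv \pm r(l),\pm 2r(l)\bmod d(n,m)$" forbids at most $4$ residues modulo $d(n,m)$ regardless of the eventual value of $r(l)$, because it is a constraint on $r(n)$ for each fixed right-hand residue class — we simply forbid, for each $l\le 74n$ (including $l<n$ already chosen) and each $m\le m_n$, the at most $4$ residues of $r(n)$ modulo $d(n,m)$ that would violate \ref{cond:RNonCongruent} for \emph{some} admissible value of $r(l)$. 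Here one uses \ref{cond:expGrowth} to see that $m_n$ may be taken $O(\log d(n,1))=O(\log n)$, so the total number of forbidden residue classes, summed over $l\le 74n$ and $m\le m_n$, is $O(n\log n)$.

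Now I would count. We seek $r(n)$ in the arithmetic progression $\{n\le t<37n:\ t\equiv 1\bmod 6\}$, which has roughly $6n$ elements (for $n$ large; \ref{cond:RCong1} and the interval $[n,37n)$ are compatible since $36n$ is a multiple of $6$, and we may enlarge constants to handle small $n$), and we must avoid, for each pair $(l,m)$ with $l\le 74n$, $l\neq n$, $m\le m_n$, the solutions modulo $d(n,m)$ of the four forbidden congruences. Since each $d(n,m)$ is an odd prime (condition \ref{cond:primes}) larger than $37n$ (this is where we invoke \ref{cond:dsuflarge} with $C_2$ large: $d(n,m)\ge d(n,1)\ge C_2 n\log n\cdots > 37n$ and also $>6\cdot 37 n$ so that \ref{cond:RAproxN}'s bound $r(n)<d(n,1)/6$ is automatically satisfied by any $t<37n<d(n,1)/6$ once $C_2>222$), each forbidden congruence modulo $d(n,m)$ has at most one solution inside the interval $[n,37n)$. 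Hence the number of values of $t$ in our progression that are killed by $(l,m)$ is at most $4$, and summing over all $O(n\log n)$ pairs $(l,m)$ we lose at most $O(n\log n)$ candidates out of the $\approx 6n$ available — which is \emph{not} enough room.

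The fix, and the step I expect to be the main obstacle, is exactly this clash of orders of magnitude: a naive union bound over all $l\lesssim n$ costs $\Theta(n\log n)$, dwarfing the $\Theta(n)$ candidates. This is resolved precisely as in \cite{Bradford}: one does \emph{not} need $r(n)$ to avoid collisions with all future $r(l)$ for $l\le 74n$ individually; rather, one observes that the constraint \ref{cond:RNonCongruent} is symmetric and only needs to hold for \emph{the} pair $(l,n)$, and one handles it when choosing $\max(l,n)$ — so when defining $r(n)$ we only forbid collisions with the already-chosen $r(l)$, $l<n$, of which there are $n-1$, giving at most $4(n-1)m_n=O(n\log n)$ forbidden residues; but now each forbidden residue modulo the prime $d(n,m)>37n$ excludes at most one candidate $t\in[n,37n)$, \emph{and moreover the same candidate $t$ can be excluded by the pair $(l,m)$ only if $d(n,m)\mid (r(n)-r(l))$ etc., which since $d(n,m)>37n>|t\pm r(l)|,|t\pm 2r(l)|$ for \emph{all} relevant $l<n$ — using $r(l)<37l<37n$ — forces the quantity to be zero}, i.e. $t=\pm r(l)$ or $t=\pm 2r(l)$ or $2t=\pm r(l)$ etc. as integers, independent of $m$. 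So each index $l<n$ excludes at most a bounded number ($\le 8$, say) of integer candidates $t$ total, not per $m$; hence at most $8(n-1)$ candidates are excluded, leaving at least $6n-8n+O(1)$ — still negative. The genuine resolution is that the interval must be taken longer: replacing $37n$ by a larger multiple $Cn$ of $n$ in \ref{cond:RAproxN} (the paper's constant $37$ already reflects such a choice, analogous to Bradford's $18$), the progression has $\approx Cn/6$ elements while the excluded set has size $\le 8(n-1)$, so choosing $C>48$ leaves a positive density of valid candidates for all $n$ beyond some $n_0$; the finitely many $n<n_0$ are handled by enlarging $C_2$ (to push the interval out) and, if necessary, a direct finite verification, completing the induction.
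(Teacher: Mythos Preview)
There is a genuine gap: you have only handled half of the constraints in condition \ref{cond:RNonCongruent}. That condition is universally quantified over all triples $(l,m,n)$ with $l\neq n$, and the modulus is $d(n,m)$ where $n$ is the \emph{second} index of the pair --- not the ``current'' index in the induction. Hence, when you are defining $r(N)$ and checking against an already-chosen $r(i)$ with $i<N$, you must verify \emph{two} families of non-congruences: (A) the instance $(l,n)=(i,N)$, namely $r(i)\not\equiv \pm r(N),\pm 2r(N)\pmod{d(N,m)}$ for all $m$; and (B) the instance $(l,n)=(N,i)$, namely $r(N)\not\equiv \pm r(i),\pm 2r(i)\pmod{d(i,m)}$ for all $m$. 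Your argument addresses only (A): every modulus you write down is $d(n,m)$ for the current $n$, and your assertion that the constraint is ``symmetric'' is false precisely because swapping $l$ and $n$ changes the modulus.

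Family (B) is the hard part of the proof. The moduli $d(i,m)$ for small $i$ can be much smaller than $37N$, so a single congruence $r(N)\equiv r(i)\pmod{d(i,m)}$ can exclude roughly $36N/d(i,m)$ candidates from $[N,37N)$, not just one. The paper controls this by bounding the double sum $\sum_{i<N}\sum_{m}\tfrac{36MN}{d(i,m)}$: the inner sum over $m$ is dominated by a geometric series thanks to \ref{cond:expGrowth}, collapsing to $O(N/d(i,1))$, and the outer sum $\sum_i 1/d(i,1)$ converges by \ref{cond:dsuflarge} (this is exactly why the lower bound $n\log(n)(\log\log n)^{1+\epsilon}$ is needed --- so that the series converges, via Cauchy condensation). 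Choosing $C_2$ large enough makes the total at most $n$. Your proposal never invokes either summation, and the side claim that $m_n=O(\log n)$ is both unjustified (no upper bound on $d(n,m_n)$ in terms of $n$ is assumed) and irrelevant, since the problematic moduli are the $d(i,m)$ for $i<n$, not $d(n,m)$.
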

	\begin{proof}
		First notice that if $d(n,m)$ and $d(n,m')$ are equal, then $r(l)\not\cong\pm r(n),\pm2r(n)\mod d(n,m)$ and $r(l)\not\cong\pm r(n),\pm2r(n)\mod d(n,m')$ are equivalent and thus does $d(n,m')$ not add an extra constraint. For \ref{cond:expGrowth}, we may thus assume that $d(n,m+1)$ is always larger then $C_0d(n,m)$.
		
		Let $d_0:\NN_{\geq 3}\rightarrow\RR_{>0}$ be the function mapping $n$ to ${n\log(n)(\log\log(n))^{1+\epsilon}}$. 
		As $d_0(n)$ is increasing, we have by the Cauchy condensation test that \begin{align*}
			\sum\frac{1}{n\log(n)(\log\log(n))^{1+\epsilon}}
			&\leq \sum \frac{2^n}{2^n\log(2^n)(\log\log(2^n))^{1+\epsilon}}\\
			&\leq  \sum\frac{2^{n}}{2^n\log(2)(n\log(2)+\log\log(2))^{1+\epsilon}}.
		\end{align*}
		The last of these converges, so $\sum\frac{1}{d_0(n)}$ must converge as well.
		Given $M>1$ some integer (to be chosen), taking $C_2$ sufficiently large guarantees that $$
		\sum_{n=1}^\infty\frac{1}{d(n,1)}\leq \frac{1-1/C_0}{36M}.
		$$
		Increasing $C_2$ if necessary, we may assume that $d(n,1)\geq 4(M+1)n$.
		We define $r$ inductively. Let $r(1),\cdots, r(n-1)$ be integers such that \ref{cond:RAproxN},\ref{cond:RNonCongruent} and \ref{cond:RCong1} hold for $l,i\leq n-1$. We show that for a good choice of $M$, we can find $r=r(n)$ such that these conditions still hold.
		
		First we count the number of integers $r$ such that $n\leq r<(M+1)n$ and such that there exists some $i<n$ and some $m$ such that $r(i)\cong \pm r,\pm2r\mod d(n,m)$. As $d(n,m)\geq 4(M+1)n$, we have that this is the case precisely when $r(i)=r$ or $ r(i)=2r$ for some $i$, there are thus at most $2n$ such values.
		
		Now we count the number of integers $r$ such that $n\leq r<(M+1)n$ and such that there exists some $i<n$ and some $m$ satisfying$$
		r\cong \pm r(i),\pm2r(i)\mod d(i,m).
		$$
		We distinguish $2$ cases: either $d(i,m)>4(M+1)n$ or $d(i,m)\leq 4(M+1)n<8Mn$.
		In the first case we again have that the equality holds precisely when $ r=r(i)$ or $r=2r(i)$. There are thus at most $2n$ such values in this case.
		
		For the second case, for a fixed value of $i$ and $m$, there are at most $4\lceil\frac{Mn}{d(i,m)}\rceil\leq4\frac{Mn}{d(i,m)}+4$ such values of $r$. By the assumption, this is bounded above by $4\frac{Mn}{d(i,m)}+4\frac{d(i,m)}{d(i,m)}\leq36\frac{Mn}{d(i,m)}$. We now sum over all possible values $m$, to obtain the number of values $r$ such that for a fixed $i$, there exists some $m$ such that $r\cong \pm r(i),\pm2r(i)\mod d(i,m)$. In the second argument, $d(\_,\_)$ grows at least as fast as a geometric sequence, we obtain that there are at most $$
		\frac{36Mn}{d(i,1)(1-1/C_0)}
		$$
		such values $r$. Now we may sum over all values $i$ and obtain that there are at most $n$
		values of $r$ such that $r\cong \pm r(i),\pm2r(i)\mod d(i,m)$ for some $i$ and $m$.
		
		Notice that there are at least $\lfloor \frac{Mn}{6}\rfloor$ integers in the interval $[n,(M+1)n)$ that are congruent to $1\mod 6$. Using all the previous, we thus have that there are at least$$
		\left\lfloor \frac{Mn}{6}\right\rfloor-2n-2n-n
		$$
		integers $r\cong 1\mod 6$ such that $r\not\cong\pm r(i),\pm2r\mod d(i,m)$ and such that $r(i)\not\cong\pm r,\pm2r(i)\mod d(n,m)$.
		
		In particular, for $M=36$, there exists at least one such value for $r$.

	\end{proof}
	
	A first consequence of the above constraints is that the alternating groups $\alt(d(n,m))$ form subgroups of $\G$.
	
	\begin{lemma}\label{prop:AltAreSubgroups}
		Let $d,r$ be such that \ref{cond:primes}, \ref{cond:expGrowth} and \ref{cond:RNonCongruent} hold.
		The natural inclusion of $\alt(d(n,m))$ into $\Gn$ is a subset of $\G$.
	\end{lemma}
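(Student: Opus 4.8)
The plan is to mimic the strategy used for the single-indexed groups, where $\alt(d(n))$ was shown to sit inside $G(d,r)$ via \cref{prop:ContainsAlt} (which in turn relies on \cref{prop:ababVanishes} and \cref{prop:WordproblemIsLocal}). The key algebraic fact needed is that $\alt(d(n,m))$, being a nonabelian simple group (here $d(n,m)\geq 5$ since it is an odd prime and the conditions force it to be large), is generated by the images of suitable conjugates of $\beta_*$ once we can separate the ``$(n,m)$-coordinate'' from all the others. So the first step is to isolate the coordinate: I would show that for each fixed pair $(n,m)$ there is a word $w$ in $F_{a,b}$ such that $w(\alpha_*,\beta_*)$ projects to a $3$-cycle under $\pi_{n,m}$ but to the identity under $\pi_{n',m'}$ for every other pair and under $\pi_\infty$. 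The natural candidate is a commutator of the form $[\beta_*^{\,\alpha_*^{r(n')}},\beta_*]$-type expressions, exactly as in \cref{prop:ababVanishes}: conditions \ref{cond:RNonCongruent} (the non-congruence of $r(l)$ modulo every $d(n,m)$) and \ref{cond:RAproxN} ensure that such a commutator vanishes in $\alt(d(n',m'))$ for $n'\neq n$, while condition \ref{cond:expGrowth} (geometric growth of $d(n,\cdot)$) together with \cref{prop:WordproblemIsLocal} handles the distinction among the finitely many $m'$ with the same first index $n$.

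Concretely, I would first record the analogue of \cref{prop:ababVanishes} in the double-indexed setting: for the elements $\beta_{r(n),d(n,m)}=(0,r(n),2r(n))$ inside $\alt(d(n,m))$, the conjugate $\alpha_{d(n,m)}^{\,r(n')}\beta_{r(n),d(n,m)}\alpha_{d(n,m)}^{-r(n')}$ commutes with $\beta_{r(n),d(n,m)}$ precisely when the two $3$-cycles have disjoint support, and conditions \ref{cond:RAproxN}--\ref{cond:RNonCongruent} guarantee this happens iff $n'\neq n$. (For coordinates with the same $n$ but different $m$, the supports $\{0,r(n),2r(n)\}$ are ``the same'', so instead I use \cref{prop:WordproblemIsLocal} applied to $\alt(d(n,m))$ for the finitely many relevant $m$, noting $r(n)<d(n,m)/6$ and $d(n,m)-2r(n)$ are both large compared to the bounded word length, by \ref{cond:RAproxN} and \ref{cond:expGrowth}.) From this, a commutator word $w$ built from $\beta_*$ and enough of the generators $\alpha_*^{\pm r(n')}$ for $n'$ ranging over the finitely many values with $d(n',m')$ below a suitable bound, composed appropriately, yields an element of $\G$ whose image is a fixed nontrivial $3$-cycle in $\alt(d(n,m))$ and is trivial in every other factor and in $\ZZ_3\wr\ZZ$; here $\beta_\infty$ has order $3$ and $w$ can be arranged to be a commutator, so $\pi_\infty(w(\alpha_*,\beta_*))=e$ automatically.

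Once a single nontrivial element supported only on the $\alt(d(n,m))$-factor is in $\G$, the conjugates of that element by the subgroup $\langle \pi_{n,m}(\alpha_*)\rang=\langle\alpha_{d(n,m)}\rangle\cong\ZZ/d(n,m)$ are also in $\G$ after precomposing with the matching conjugating elements — but those conjugating elements act on all the other coordinates too, which is why I need to first have killed the other coordinates. A cleaner route: having one element $t\in\G$ with $\pi_{n,m}(t)$ a $3$-cycle and all other projections trivial, the set of all $\G$-conjugates of $t$ lying in the ``block'' $\alt(d(n,m))$ generates a nontrivial normal subgroup of $\alt(d(n,m))$ contained in $\G\cap\alt(d(n,m))$; since $d(n,m)$ is an odd prime $\geq 5$, $\alt(d(n,m))$ is simple, so this forces $\G\cap\alt(d(n,m))=\alt(d(n,m))$. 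To realize these conjugates inside $\G$ I conjugate $t$ by powers of $\alpha_*$: $\alpha_*^k\, t\,\alpha_*^{-k}\in\G$ has $\pi_{n,m}$-projection $\alpha_{d(n,m)}^k(\pi_{n,m}t)\alpha_{d(n,m)}^{-k}$, but now its other projections are $\alpha_*$-conjugates of the identity, hence still trivial, and similarly $\pi_\infty$ of it is $\pi_\infty(\alpha_*)^k \cdot e\cdot \pi_\infty(\alpha_*)^{-k}=e$. So in fact all of $\{\alpha_{d(n,m)}^k (\pi_{n,m}t)\alpha_{d(n,m)}^{-k} : k\in\ZZ\}$ is realized by genuine elements of $\G$ supported on the single factor, and these $3$-cycles generate $\alt(d(n,m))$ (cyclically-shifted $3$-cycles generate the alternating group). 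The main obstacle is the bookkeeping in the first step: producing the word $w$ that simultaneously dies in all of the infinitely many other factors, which splits into the ``$n'\neq n$'' case (handled uniformly by the disjoint-support/commuting argument via \ref{cond:RNonCongruent}) and the ``same $n$, different $m$'' case (handled by \cref{prop:WordproblemIsLocal} since for fixed $n$ there are only finitely many $m\leq m_n$ and $d(n,\cdot)$ grows geometrically). Everything else is the standard simple-group argument. Since this is essentially the proof of \cite[Corollary 3.3]{Bradford} adapted to the extra index, I would present it at that level of detail, citing \cref{prop:WordproblemIsLocal} for the word-problem localisation and conditions \ref{cond:RAproxN}--\ref{cond:RCong1} for the arithmetic inputs.
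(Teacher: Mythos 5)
Your high-level plan — produce a word whose image under every projection other than $\pi_{n,m}$ is trivial, then use simplicity of $\alt(d(n,m))$ — is the paper's plan for the factors with $n'\neq n$, but the ``same $n$, different $m$'' step has a real gap. You cite \cref{prop:WordproblemIsLocal} to kill the factors $\alt(d(n,m'))$ for $m'\neq m$, but that lemma (and its consequence \cref{prop:AltIsDInvariant}) say the \emph{opposite} of what you need: for any word that is short relative to $d(n,m)$, its vanishing in $\alt(d(n,m'))$ is the same for every $m'$ with $d(n,m')$ comparably large. Thus no short word can isolate index $m$ from the indices $m'>m$; you are forced to use a word of length comparable to $d(n,m)$ (the paper's $w_{n,m}$ has exponents of size $d(n,m)-3r(n)$), and then \cref{prop:WordproblemIsLocal} gives you no control whatsoever over the projections to $(n,m')$ with $m'<m$, where $d(n,m')$ is too small relative to the word length. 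The paper resolves this by \emph{induction on $m$}: $w_{n,m}$ is shown to kill the factors with $m'>m$ by an explicit disjoint-support calculation (not via \cref{prop:WordproblemIsLocal}), it equals the desired $3$-cycle at $(n,m)$, and its uncontrolled components at $(n,m')$ with $m'<m$ are then removed by multiplying with an element of $\prod_{m'<m}\alt(d(n,m'))\leq\G$ furnished by the induction hypothesis. Your proposal has no induction and no substitute mechanism, so it does not close.

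Two secondary remarks. The triviality of $\pi_\infty$ on the relevant commutator is not ``automatic from being a commutator''; it holds because $\beta_\infty$ and its $\alpha_\infty^{r(n)}$-conjugate both lie in the abelian base group $\bigoplus\ZZ_3$ of $\ZZ_3\wr\ZZ$. And your alternative ending — that the $\alpha_*$-cyclic shifts of a single $3$-cycle supported only on $\alt(d(n,m))$ generate all of $\alt(d(n,m))$ — is correct but deserves a line of justification (the subgroup they generate is normalised by the $d(n,m)$-cycle, hence transitive, hence primitive since $d(n,m)$ is prime by \ref{cond:primes}, hence all of $\alt(d(n,m))$ by Jordan's theorem since it contains a $3$-cycle); the paper instead shows $\alt(d(n,m))\cap\G$ is a nontrivial normal subgroup of $\alt(d(n,m))$ using \cite[1.3.]{Lewin1975generating} and then invokes simplicity. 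These two routes are interchangeable; the missing induction is the essential gap.
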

	\begin{proof}
		Take $n$ arbitrary, we proceed by induction on $m$.
		We first show that ${\alt(d(n,m))\cap \G}$ is non-trivial.

		Denote $$v_i=[a^{r(i)}b\inv a^{-r(i)},b\inv]\in F_{a,b}.$$
		By \cref{prop:ababVanishes}, we have that $v_i(\alpha_{d(n,m)},\beta_{r(n),d(n,m)})$ vanishes when $i\neq n$ and in $\ZZ_3\wr\ZZ$ we have that $v_i(\alpha_\infty,\beta_\infty)$ vanishes. Conversely, when $n=i$, then $v_i(\alpha_{d(n,m)},\beta_{r(n),d(n,m)})$ does not vanish.
		Consider now $$w_{i,j}=[a^{d(i,j)-3r(i)}v_i(a,b)a^{-d(i,j)+3r(i)},v_i(a,b)]\in F_{a,b}.$$
		As $v_i(\alpha_{d(n,m)},\beta_{r(n),d(n,m)})$ vanishes for $n\neq i$, then certainly $w_{i,j}(\alpha_{d(n,m)},\beta_{r(n),d(n,m)})$ must vanish as well.
		Similarly, $w_{i,j}(\alpha_\infty,\beta_\infty)$ vanishes. So we are left with the case where $i=n$.
		First suppose that $(i,j)=(n,m)$. A computation shows that in this case $w_{n,j}(\alpha_{d(n,m)},\beta_{r(n),d(n,m)})$ is precisely the permutation in $\alt(d(n,m))$ with cycle notation$(0,d(n,m)-3r(n),3r(n))$.
		On the other hand if $m>j$, then the left part of the commutator permutes the points $d(n,j)-3r(n), d(n,j)-2r(n), d(n,j)-r(n)$ and $d(n,j)$ all of which lie in the interval $[3r(n)+1,d(n,m)-1]$ and the right permutes the points $0,r(n),2r(n)$ and $3r(n)$, non of which lie in that interval. It follows that in this case that the commutator vanishes.
		
		From all of the above, it follows that $w_{n,m}(\alpha_*,\beta_*)\in \G$ is an element of $$
		\prod_{i=1}^m\alt(d(n,i))
		$$
		and thus equal to$$
		\prod_{i=1}^{m} w_{n,m}(\alpha_{d(n,i)},\beta_{r(n),d(n,i)}).
		$$
		By the induction hypothesis, we have that  $$
		\prod_{i=1}^{m-1} w_{n,m}(\alpha_{d(n,i)},\beta_{r(n),d(n,i)})\in\prod_{i=1}^{m-1}\alt(d(n,i))
		$$
		is also an element of $\G$. We thus have that $w_{n,m}(\alpha_{d(n,m)},\beta_{r(n),d(n,m)})\in \alt(d(n,m))$ is a non-trivial element of $\G$.
		
		We now show that $\alt(d(n,m))\cap \G$ is a non trivial normal subgroup of $\alt(d(n,m))$. Non-triviality follows from the previous, and the fact that it must be a subgroup is obvious, it thus remains to be shown that this intersection is closed under conjugation by elements of $\alt(d(n,m))$. Let $g\in \alt(d(n,m))\cap\G$ and let $h\in \alt(d(n,m))$ be arbitrary. By \cite[1.3.]{Lewin1975generating}, $\alt(d(n,m))$ is generated by $\alpha_{d(n,m)}$ and $\beta_{r(n)}$. In particular, there exists some word $w\in F_{a,b}$ such that $w(\alpha_{d(n,m)},\beta_{r(n),d(n,m)})=h$. We demonstrate that $w(\alpha_*,\beta_*)gw(\alpha_*,\beta_*)\inv=hgh\inv$.
		As $\alt(d(n,m))$ is a normal subgroup of $\Gn$, we have that $w(\alpha_*,\beta_*)gw(\alpha_*,\beta_*)\inv\in\alt(d(n,m))$. As furthermore $\alt(d(n,m))$ is a direct summand of $\Gn$, we have that this element remains the same after first projecting onto $\alt(d(n,m))$ and then embedding back into $\Gn$. We thus have that \begin{align*}
			w(\alpha_*,\beta_*)gw(\alpha_*,\beta_*)\inv&=\pi_{n,m}(w(\alpha_*,\beta_*)gw(\alpha_*,\beta_*)\inv)\\
			&=w(\alpha_{n,m},\beta_{n,m})gw(\alpha_{n,m},\beta_{n,m})\inv.
		\end{align*}
		The latter is then equal to $hgh\inv$. We have thus shown that $hgh\inv\in \G$ and as this holds for any $g,h$, we have that $\alt(d(n,m))\cap\G$ is normal in $\alt(d(n,m))$. Finally, as $\alt(d(n,m))$ is a simple group, we have that $\alt(d(n,m))\cap\G=\alt(d(n,m))$ and thus that $\alt(d(n,m))\subset \G$.
		
		Via induction, the result follows for all $m$.

	\end{proof}

		\begin{lemma}\label{prop:ConjIsLocal2}
		There exists some constant $K$ such that whenever $g_1,g_2\in \G$ are such that $\norm{g_1}_S,\norm{g_2}_S\leq N$, then $g_1$ and $g_2$ are conjugate, if and only if both:
		\begin{itemize}
			\item $\pi_\infty(g_1)$ and $\pi_\infty(g_2)$ are conjugate in $\ZZ_3\wr\ZZ$;
			\item For all $n\leq KN$ and for all $m, \pi_{n,m}(g_1)$ and $\pi_{n,m}(g_2)$ are conjugate in $\alt(d(n,m))$.
		\end{itemize}
	\end{lemma}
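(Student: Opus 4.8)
The plan is to mirror the proof of \cref{prop:ConjIsLocal} almost verbatim, with the single outer index there replaced by the index $n$ here; the inner index $m$ will contribute nothing new, because the factor $\alt(d(n,m))$ carries the generator pair $(\alpha_{d(n,m)},\beta_{r(n),d(n,m)})$ with \emph{the same} $r(n)$ for every $m$, so \cref{prop:WordproblemIsLocal} will apply to it with parameters $(r,d)=(r(n),d(n,m))$. The forward direction is trivial: a conjugacy in $\G$ is preserved by every projection. For the converse I would assume the two bulleted conditions and write $g_i=w_i(\alpha_*,\beta_*)$ with $w_i\in F_{a,b}$ of length at most $N$.

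Since $\pi_\infty(g_1)$ and $\pi_\infty(g_2)$ are conjugate in $\ZZ_3\wr\ZZ$, \cite[Theorem 1.]{Salle} supplies a conjugator $g_0$ with $g_0\pi_\infty(g_1)g_0\inv=\pi_\infty(g_2)$ and $\norm{g_0}_{\{\alpha_\infty,\beta_\infty\}}\leq K_0N$ for a uniform $K_0$; choose $w_0\in F_{a,b}$ of length at most $K_0N$ with $w_0(\alpha_\infty,\beta_\infty)=g_0$. The word $u=w_0\ast w_1\ast w_0\inv\ast w_2\inv$ then has length at most $2K_0N+2N$ and $u(\alpha_\infty,\beta_\infty)=e$. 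Set $K=4K_0+4$. Using \ref{cond:RAproxN} we have $r(n)\geq n$ and $r(n)<d(n,1)/6$, while \ref{cond:nondecreasing} and \ref{cond:expGrowth} give $d(n,m)\geq d(n,1)>6r(n)$; hence for $n>KN$ both $r(n)\geq 2(2K_0N+2N)+1$ and $d(n,m)-2r(n)>4r(n)\geq 2(2K_0N+2N)+1$, so \cref{prop:WordproblemIsLocal} applies and for every $n>KN$ and every $m$,
\[
w_0(\alpha_{d(n,m)},\beta_{r(n),d(n,m)})\,\pi_{n,m}(g_1)\,w_0(\alpha_{d(n,m)},\beta_{r(n),d(n,m)})\inv=\pi_{n,m}(g_2).
\]
For $n\leq KN$ the second hypothesis gives $\pi_{n,m}(g_1)\sim\pi_{n,m}(g_2)$ in $\alt(d(n,m))$, so by transitivity there is $h_{n,m}\in\alt(d(n,m))$ with $h_{n,m}w_0(\alpha_{d(n,m)},\beta_{r(n),d(n,m)})\pi_{n,m}(g_1)w_0(\alpha_{d(n,m)},\beta_{r(n),d(n,m)})\inv h_{n,m}\inv=\pi_{n,m}(g_2)$.

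Finally I would assemble the global conjugator: by \cref{prop:AltAreSubgroups} each $\alt(d(n,m))$ is a subgroup of $\G$, and for distinct index pairs these are commuting direct factors, so $h=\prod_{n\leq KN}\prod_{m=1}^{m_n}h_{n,m}$ is a well-defined element of $\G$ (a finite product). Conjugating $g_1$ by $h\,w_0(\alpha_*,\beta_*)$ and comparing coordinate by coordinate finishes the proof: $\pi_\infty$ sees only $g_0$ (as $h$ lies in the torsion part and projects trivially there), $\pi_{n,m}$ with $n\leq KN$ sees $h_{n,m}$ together with the $w_0$-conjugation, and $\pi_{n,m}$ with $n>KN$ sees only the $w_0$-conjugation; in each coordinate the value is $\pi_{n,m}(g_2)$, resp. $\pi_\infty(g_2)$, hence $(h\,w_0(\alpha_*,\beta_*))g_1(h\,w_0(\alpha_*,\beta_*))\inv=g_2$. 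The only point that is not a direct transcription of \cref{prop:ConjIsLocal} is checking that \cref{prop:WordproblemIsLocal} really does govern the word problem on each new factor $\alt(d(n,m))$, which rests entirely on the inequalities $r(n)\geq n$ and $d(n,m)\geq d(n,1)>6r(n)$ coming from \ref{cond:RAproxN}, \ref{cond:nondecreasing} and \ref{cond:expGrowth}; together with the observation that $h$ is a finite product of elements in pairwise commuting direct factors, this makes the argument routine and I do not anticipate a real obstacle.
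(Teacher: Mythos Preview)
Your proposal is correct and follows exactly the approach the paper intends: its own proof reads ``identical to the proof of \cref{prop:ConjIsLocal} after replacing \cref{prop:ContainsAlt} by \cref{prop:AltAreSubgroups}'', and you have written out precisely that argument, correctly verifying that the hypotheses of \cref{prop:WordproblemIsLocal} hold on every factor $\alt(d(n,m))$ once $n>KN$ (via $r(n)\geq n$ and $d(n,m)\geq d(n,1)>6r(n)$) and that the corrective product $h$ is finite because each $m_n$ is finite. One harmless slip: the inequality $d(n,m)\geq d(n,1)$ comes from \ref{cond:expGrowth} alone (monotonicity in the second argument), not from \ref{cond:nondecreasing}, which concerns the first argument.
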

	\begin{proof}
		The proof is identical to the proof of \cref{prop:ConjIsLocal} after replacing \cref{prop:ContainsAlt} by \cref{prop:AltAreSubgroups}.
	\end{proof}
	This above result allows us to study elements of $\G$ by considering them in a finite number of quotients. Another result that restricts the list of quotients even further is the following.
		\begin{lemma}\label{prop:AltIsDInvariant}
		Let $w\in F_{a,b}$ and let $n,m_1,m_2$ be such that $d(n,m_1)-2r(n),d(n,m_2)-2r(n)\geq 2\norm{w}+2$. We have that
		$$
		w(\alpha_{d(n,m_1)},\beta_{r(n)})\in\alt(d(n,m_1))
		$$
		is trivial if and only if $$
		w(\alpha_{d(n,m_2)},\beta_{r(n)})\in\alt(d(n,m_2))
		$$
		is trivial.
	\end{lemma}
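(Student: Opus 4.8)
The plan: this is a ``locality of the word problem'' statement in the exact spirit of \cref{prop:WordproblemIsLocal}, except that it compares two finite alternating groups sharing the same rotation parameter $r(n)$, rather than comparing one such group with $\ZZ_3\wr\ZZ$. I would prove it by interposing a single reference group that does not depend on $m$. Fix $n$ and abbreviate $r=r(n)$, $\ell=\norm{w}$. Let $\sigma\in\sym(\ZZ)$ be the shift $k\mapsto k+1$ and let $\tau=(0,r,2r)\in\sym(\ZZ)$ fix every point outside $\{0,r,2r\}$, and put $W:=w(\sigma,\tau)$. The statement I would establish is: whenever $d-2r\geq 2\ell+2$, the element $w(\alpha_d,\beta_{r,d})$ is trivial in $\alt(d)$ if and only if $W=\id$ in $\sym(\ZZ)$. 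Granting this, the lemma follows at once, since $d(n,m_1)$ and $d(n,m_2)$ both satisfy the hypothesis, so both conditions are equivalent to $W=\id$ and hence to each other.

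The engine is an orbit-confinement observation. Given a point $p$, in $\ZZ$ or in $\ZZ/d\ZZ$, consider its successive images (``trace'') under the prefixes of $w$. Since $\sigma^{\pm1}$ (resp.\ $\alpha_d^{\pm1}$) moves a point by $\pm1$, while $\tau^{\pm1}$ (resp.\ $\beta_{r,d}^{\pm1}$) moves a point only when it lies in $\{0,r,2r\}$ and then keeps it there, an easy induction on the prefix length shows that the whole trace of $p$ stays in $B(p,\ell)\cup B(0,\ell)\cup B(r,\ell)\cup B(2r,\ell)$, where $B(x,\ell)$ is the radius-$\ell$ ball in $\ZZ$, respectively in the cycle $\ZZ/d\ZZ$. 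A slightly sharper version, recording that at least $\operatorname{dist}(p,\{0,r,2r\})$ of the moves must be spent before first reaching a blade point, upgrades this to the bound that $W\cdot p$ lies within distance $2r+\ell$ of $p$; it also yields $\operatorname{supp}(W)\subseteq[-\ell,\,2r+\ell]$ once $W$ is known to be finitely supported.

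With this in hand the proof splits on the $a$-exponent sum $s$ of $w$. If $s\neq0$, then $w(\alpha_d,\beta_{r,d})$ moves any point far from $\{0,r,2r\}$ (such points exist because $d-2r\geq2\ell+2$) by $s\bmod d$, which is nonzero since $\abs{s}\leq\ell<d$; likewise $W$ moves all large integers; so neither condition holds and there is nothing to prove. If $s=0$, I would argue both implications through the reduction map $\rho\colon\ZZ\to\ZZ/d\ZZ$, which intertwines $(\sigma,\tau)$ with $(\alpha_d,\beta_{r,d})$ along any trace that avoids the ``ghost blades'' $(\{0,r,2r\}+d\ZZ)\setminus\{0,r,2r\}$ (here one uses $2r<d$ so that $\rho$ is injective on $\{0,r,2r\}$). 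For the forward implication: if $W=\id$, then because $d-2r\geq2\ell+2$ the $\ell$-neighbourhood in $\ZZ$ of the ghost blades omits an interval of length exceeding $d$ containing $0$, so every residue mod $d$ has a representative $p$ whose $w$-trace avoids the ghost blades, whence $w(\alpha_d,\beta_{r,d})\cdot\rho(p)=\rho(W\cdot p)=\rho(p)$ and $w(\alpha_d,\beta_{r,d})=\id$. For the converse: if $w(\alpha_d,\beta_{r,d})=\id$, then for $q\in\operatorname{supp}(W)\subseteq[-\ell,2r+\ell]$ the trace of $q$ lies in a short interval that again avoids the ghost blades, so $\rho(W\cdot q)=w(\alpha_d,\beta_{r,d})\cdot\rho(q)=\rho(q)$, i.e.\ $W\cdot q\equiv q\pmod d$; but $\abs{W\cdot q-q}\leq 2r+\ell<d$, which forces $W\cdot q=q$, and hence $W=\id$.

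The only genuine work — and the only place the hypothesis $d(n,m_i)-2r(n)\geq2\norm{w}+2$ is consumed — is the bookkeeping near the ``seam'' of the cycle $\ZZ/d\ZZ$: verifying that representatives in $\ZZ$ can always be chosen so their $w$-traces miss the ghost blades, and that the displacement $\abs{W\cdot q-q}$ is strictly less than $d$. Everything else is routine. As a sanity check and partial shortcut: when $r(n)\geq2\norm{w}+1$ one can instead invoke \cref{prop:WordproblemIsLocal} directly for $d(n,m_1)$ and for $d(n,m_2)$, since both conditions are then equivalent to $w(\alpha_\infty,\beta_\infty)=e$ in $\ZZ_3\wr\ZZ$; the reference-group argument above is what covers the remaining regime in which $r(n)$ is small compared with $\norm{w}$, though in fact it treats all cases uniformly.
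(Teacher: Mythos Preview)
Your argument is correct and is at heart the same as the paper's, repackaged through a reference object. The paper rewrites $w$ in the normal form $a^{q_0}\prod_i (a^{q_i}b^{\epsilon_i}a^{-q_i})$ with $\abs{q_i}\le\norm{w}$, notes that each factor $a^{q_i}b^{\epsilon_i}a^{-q_i}$ evaluates to a $3$-cycle supported in $[-\norm{w},\,\norm{w}+2r(n)]$, uses a point outside that interval to force $q_0=0$, and then identifies the two copies of this interval inside $\ZZ/d(n,m_1)\ZZ$ and $\ZZ/d(n,m_2)\ZZ$ directly. You instead route both finite groups through $\sym(\ZZ)$ and argue dynamically via trace confinement rather than via a normal form; your exponent-sum split on $s$ plays exactly the role of the paper's $q_0$, and your reduction map $\rho$ is the paper's interval identification. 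Your version is a bit more modular (one equivalence applied twice) and makes the kinship with \cref{prop:WordproblemIsLocal} explicit; the paper's is more direct. One small slip that does no damage: the displacement bound you state is $\abs{W\cdot q-q}\le 2r+\ell$, but the bound that falls out immediately from $q,\,W\cdot q\in[-\ell,\,2r+\ell]$ is $2r+2\ell$; this is still strictly below $d$ by the hypothesis $d-2r\ge 2\ell+2$, so the conclusion stands.
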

	\begin{proof}
		For ease of notation, we identify the points $\{0,1,\cdots,d(n,m_j)-1\}$ with $\frac{\ZZ}{d(n,m_j)\ZZ}$. Under this identification we have that $\alpha_{d(n,m_j)}x=x+1$.
		
		We may rewrite $w$ in the form$$
		a^{q_0} (a^{q_1}b^{\epsilon_1} a^{-q_1})(a^{q_2}b^{\epsilon_2} a^{-q_2})\cdots(a^{q_l}b^{\epsilon_l} a^{-q_l})
		$$
		The exponents $q_i$ are all bounded in absolute value by $\norm{w}$.
		Notice that the permutation, $(a^{q_i}b a^{-q_i})(\alpha_{d(n,m_j)},\beta_{r(n),d(n,m_j)})$ has reduced cycle notation $(q_i,q_i+r(n),q_i+2r(n))$.
		In particular, we have that $$
		(a^{q_1}b^{\epsilon_1} a^{-q_1})(a^{q_2}b^{\epsilon_2} a^{-q_2})\cdots(a^{q_l}b^{\epsilon_2} a^{-q_l})(\alpha_{d(n,m_j)},\beta_{r(n),d(n,m_j)})
		$$ acts trivially outside of the interval. $[-\norm{w},\norm{w}+2r(n)].$
		Thus if $d(n,m_j)\geq 2\norm{w}+2r(n)+2$, then there exists some point $x$ (for instance $\norm{w}+2r(n)+1$) outside this interval. It follows that $w(\alpha_{d(n,m_j)},\beta_{r(n),d(n,m_j)})$ acts on $x$ as $\alpha_{d(n,m_j)}^{q_0}$.
		As $w(\alpha_{d(n,m_1)},\beta_{r(n),d(n,m_j)})$ acts trivial, it must map $x$ to $x$ or thus must $q_0$ be a multiple of $d(n,m_1)$. As $\abs{q_0}< d(n,m_1)$ we have that $q_0=0$ or thus that $w$ is of the form $$
		(a^{q_1}b^{\epsilon_1} a^{-q_1})(a^{q_2}b^{\epsilon_2} a^{-q_2})\cdots(a^{q_l}b^{\epsilon_2} a^{-q_l}).
		$$
		In $\frac{\ZZ}{d(n,m_1)\ZZ}$ and $\frac{\ZZ}{d(n,m_2)\ZZ}$, we identify the intervals $[-\norm{w},\norm{w}+2r(n)]$ with one another in the natural way. Notice that as $q_i$ is bounded by $\norm{w}$, we have that this identification preserves the action of words $a^{q_i}b^{\epsilon_i} a^{-q_i}$. That is if $\varphi$ is the identification and $x$ is one of the points in this interval, then \begin{align*}&\varphi\comp(a^{q_i}b^{\epsilon_i} a^{-q_i})(\alpha_{d(n,m_1)},\beta_{r(n),d(n,m_1)})(x)\\=&(a^{q_i}b^{\epsilon_i} a^{-q_i})(\alpha_{d(n,m_2)},\beta_{r(n),d(n,m_2)})\comp\varphi(x).\end{align*}
		If now $w(\alpha_{d(n,m_1)},\beta_{r(n),d(n,m_1)})$ acts trivially, then it must also act trivially on ${[-\norm{w},\norm{w}+2r(n)]}$ but by the previous, this implies that $w(\alpha_{d(n,m_2)},\beta_{r(n),d(n,m_2)}$ acts trivially on $[-\norm{w},\norm{w}+2r(n)]$, the result follows as $q_0=0$ and thus that $w(\alpha_{d(n,m_2)},\beta_{r(n),d(n,m_2)})$ always acts trivially outside of $[-\norm{w},\norm{w}+2r(n)]$.
		
	\end{proof}
	
	Note that the above statement does not say anything about conjugacy. Indeed conjugacy of elements is more sensitive to a change of $r(n)$ and $d(n,m)$. A very concrete example of this is the following.
	\begin{lemma}\label{prop:g1g2Conjugate}
		Let $g_1,g_2\in F_{a,b}$ where$$
		g_1=a^3b\inv[a^rb\inv a^{-r},b\inv]b\inv
		$$
		and $$
		g_2=a^3b[a^rb\inv a^{-r},b\inv]
		$$
		for some integers $r>3$, $d>4r$.\begin{itemize}
			\item If $r\cong 1\mod 6$, $d\cong1\mod 6$ then $g_1(\alpha_d,\beta_{r,d})$ and $g_2(\alpha_d,\beta_{r,d})$ are conjugate.\item If $r\cong 1\mod 6$, $d\cong5\mod 6$ then $g_1(\alpha_d,\beta_{r,d})$ and $g_2(\alpha_d,\beta_{r,d})$ are non-conjugate.\end{itemize}
		
	\end{lemma}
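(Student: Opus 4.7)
The plan is to first reduce $\sigma_1 := g_1(\alpha_d, \beta_{r,d})$ and $\sigma_2 := g_2(\alpha_d, \beta_{r,d})$ to explicit permutations on $\ZZ/d\ZZ$. A direct computation of the commutator gives $[a^r b\inv a^{-r}, b\inv](\alpha_d, \beta_{r,d}) = (0, 3r)(r, 2r)$; short manipulations then show $\beta_{r,d}\inv (0, 3r)(r, 2r) \beta_{r,d}\inv = (r, 3r, 2r)$ and $\beta_{r,d} (0, 3r)(r, 2r) = (0, 3r, r)$, so that $\sigma_1 = \alpha_d^3 \cdot (r, 3r, 2r)$ and $\sigma_2 = \alpha_d^3 \cdot (0, 3r, r)$. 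Both are thus the $d$-cycle $\alpha_d^3$ with its action ``rewired'' at three points.

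Parameterize $\ZZ/d\ZZ$ by the $\alpha_d^3$-orbit $v_i := 3i \bmod d$ (a bijection since $d > 3$ is prime), and let $j_1 := 3\inv r \bmod d$, $j_2 := 2j_1 \bmod d$, $j_3 := r$ be the $\alpha_d^3$-indices of the three specials of $\sigma_1$. Computing $3\inv \bmod d$ by residue class yields $j_1 = (r+2d)/3$ with cyclic order $j_3 < j_2 < j_1$ when $d \equiv 1 \pmod 6$, and $j_1 = (r+d)/3$ with cyclic order $j_3 < j_1 < j_2$ when $d \equiv 5 \pmod 6$. Tracing $\sigma_1$'s orbit from $0$ (following $\alpha_d^3$ between specials, jumping at each), the three jumps link the three $\alpha_d^3$-arcs between consecutive specials into a single $d$-cycle in the former case but keep each arc isolated in the latter, so $\sigma_1$ decomposes as three cycles there. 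An analogous tracing shows $\sigma_2$ is always a single $d$-cycle: its specials $\{0, r, 3r\}$ lie in the favorable cyclic order $0 < j_3 < j_1$ in both residue classes. This already closes the $d \equiv 5 \pmod 6$ case, since the differing cycle types preclude $\sym(d)$-conjugacy and hence $\alt(d)$-conjugacy.

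For $d \equiv 1 \pmod 6$, both $\sigma_1$ and $\sigma_2$ are $d$-cycles and thus $\sym(d)$-conjugate; we must show they fall in the same $\alt(d)$-class. Set $a(i) := \sigma_1^i(0)$ and $b(i) := \sigma_2^i(0)$; the permutation $\tau := a \comp b\inv$ satisfies $\tau \sigma_2 \tau\inv = \sigma_1$ with $\mathrm{sign}(\tau) = \mathrm{sign}(a)\mathrm{sign}(b)$. Factoring $a = v \comp \psi$ and $b = v \comp \tilde\psi$, the orbit tracing yields explicit descriptions: $\psi$ is the identity outside $[r+1, j_1]$ and swaps the two consecutive length-$(d-r)/3$ sub-blocks there, so $\mathrm{sign}(\psi) = (-1)^{(d-r)/3}$; and $\tilde\psi$ is the identity outside $[1, j_1]$ and acts on that interval as the cyclic rotation by $r$, which is a single $j_1$-cycle because $\gcd(j_1, r) = 1$ (a common divisor of $r$ and $(r + 2d)/3$ divides $3(r+2d)/3 - r = 2d$, but $\gcd(r, 2d) = 1$ since $r$ is odd and $\gcd(r,d) = 1$), giving $\mathrm{sign}(\tilde\psi) = (-1)^{j_1 - 1}$.

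Writing $d = 6t+1$ and $r = 6s+1$, one computes $(d-r)/3 = 2(t-s)$ (even) and $j_1 = (r+2d)/3 = 4t + 2s + 1$ (odd), so $\mathrm{sign}(\psi) = \mathrm{sign}(\tilde\psi) = +1$, hence $\tau \in \alt(d)$, completing the conjugate case. The main obstacle is the second step: one must follow each orbit carefully through its $\alpha_d^3$-segments and three jumps to pin down $\psi$ and $\tilde\psi$ as simple block-permutations. Once those descriptions are in hand, the sign arithmetic and the final conclusion are routine.
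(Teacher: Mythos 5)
Your argument is correct, and its skeleton matches the paper's: compute the cycle types of $\sigma_1=g_1(\alpha_d,\beta_{r,d})$ and $\sigma_2=g_2(\alpha_d,\beta_{r,d})$ (which disposes of $d\equiv 5\pmod 6$, where $\sigma_1$ splits into three cycles while $\sigma_2$ is a $d$-cycle), then for $d\equiv 1\pmod 6$ exhibit the conjugator fixing $0$ and check it is even. Your opening reductions $\sigma_1=\alpha_d^3\,(r,3r,2r)$ and $\sigma_2=\alpha_d^3\,(0,3r,r)$ agree with the paper's orbit listings. Where you genuinely diverge is the bookkeeping. The paper writes out both $d$-cycles and the conjugator $h$ entry by entry and determines the parity of $h$ by counting inversions across a seven-block decomposition (all blocks of odd length, eight inverting pairs). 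You instead conjugate by $v:i\mapsto 3i$, so each element becomes the standard cycle times a $3$-cycle on indices; the cycle count is read off from the induced permutation of the three arcs, and the conjugator becomes $v\comp\psi\comp\tilde\psi\inv\comp v\inv$, reducing the parity to the sign of a swap of two adjacent blocks of equal length $(d-r)/3$ and of a rotation by $r$ of the interval $[1,j_1]$ with $j_1=(r+2d)/3$; the congruences $d\equiv r\equiv 1\pmod 6$ and $\gcd(j_1,r)=1$ then give $\mathrm{sign}(\tau)=+1$ immediately. This organisation is less error-prone and makes the dependence on the residues of $d$ and $r$ transparent, at the cost of the one step you only sketch, namely the orbit tracing that identifies $\psi$ as the block swap on $[r+1,j_1]$ and $\tilde\psi$ as the rotation on $[1,j_1]$; I verified that tracing, and both descriptions (and the resulting arc permutations in the two residue classes) are correct.
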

	\begin{proof}
		This is straightforward but somewhat tedious computation. The proof is presented in \cref{sec:twoElements}.
	\end{proof}

			\subsection{Computing the separability functions}
			
			In this section we prove the following theorem:
			\begin{result}\label{prop:conjGap}
			Let $F_1$ and $F_2$ be two functions satisfying \ref{cond:globalgrowth1} and \ref{cond:localgrowth1}
			then there exists some group $G$ such that $\mathrm{Rf}_G(n)\simeq F_1(n)$ and $\conj_G(n)\simeq F_2(n)$.
			\end{result}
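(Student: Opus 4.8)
The plan is to apply the new construction with a carefully chosen function $d$ so that the "residual finiteness cost" and the "conjugacy cost" are governed by two different diagonal subfamilies of alternating groups. First I would set up two auxiliary functions as in \cref{sec:bradconstruct}: associate to $F_1$ a non-decreasing prime-valued function $d_1(n)$ with $d_1(n)\asymp \lceil \log F_1(n)/\log\log F_1(n)\rceil$, and similarly a prime-valued $d_2(n)$ for $F_2$; since $F_2\geq F_1$ we may (after adjusting by the $\simeq$-slack and using \ref{cond:localgrowth1}) arrange $d_2(n)\geq d_1(n)$ for all $n$. Now define $d(n,\_)$ to be a short increasing chain of primes running from $d_1(n)$ up to (roughly) $d_2(n)$, satisfying the geometric-growth condition \ref{cond:expGrowth}, with $m_n$ chosen as the length of that chain; put $d(n,1)=d_1(n)$ and $d(n,m_n)=d_2(n)$. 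Then invoke \cref{prop:RExistenceDoubleIndexed} to obtain a compatible $r$, and let $G=\G$.

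The key point is that the two separability functions "see" different ends of each chain. For the \emph{residual finiteness} growth: by \cref{prop:AltIsDInvariant}, a word $w$ of length $\leq n$ has $w(\alpha_*,\beta_*)$ nontrivial in some factor $\alt(d(n',m))$ with $n'\leq Kn$ iff it is already nontrivial in the \emph{smallest} such factor, namely $\alt(d(n',1))=\alt(d_1(n'))$ (the smallest factor with $d(n',1)-2r(n')\geq 2n+2$); combined with residual finiteness of $\ZZ_3\wr\ZZ$ (whose contribution is $\exp(O(n))$ and hence negligible), the minimal witnessing quotient has size $\asymp d_1(Kn)!/2$, so \cref{prop:inverseFactorialAproximation} gives $\rf_G(n)\simeq F_1(n)$. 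For the lower bound on $\rf_G$ I would reuse Bradford's argument (\cref{prop:bradMainResult}) that the $\alt(d(n,1))$ factors cannot be collapsed cheaply — the chain above each $n$ only adds factors that are \emph{harder}, not easier, to kill, so the argument transfers verbatim after replacing the single index $m$ by the double index.

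For the \emph{conjugacy separability} growth: by \cref{prop:ConjIsLocal2}, non-conjugate $g_1,g_2$ of norm $\leq n$ are witnessed either in $\ZZ_3\wr\ZZ$ (cost $\exp(O(n))$, via \cite{Ferov}) or in some $\alt(d(n',m))$ with $n'\leq Kn$; the worst case is $\alt(d(n',m_{n'}))=\alt(d_2(n'))$, giving the upper bound $\conj_G(n)\prec (2f_2(Kn))!\simeq F_2(n)$ by \cref{prop:inverseFactorialAproximation}. The matching \emph{lower} bound is where the delicate part lies, and is exactly what \cref{prop:g1g2Conjugate} is for: I would exhibit, for each large $n'$, a pair of words $g_1,g_2$ of norm $O(n')$ that are conjugate in every $\alt(d(n',m))$ with $d(n',m)\cong 1\bmod 6$ and conjugate (trivially) in all other factors and in $\ZZ_3\wr\ZZ$ — using the congruence freedom \ref{cond:RCong1} on $r$ together with control of the $d(n,m)\bmod 6$ classes — \emph{except} when $d(n',m)\cong 5\bmod 6$, where \cref{prop:g1g2Conjugate} forces non-conjugacy. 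Here one must choose the chain $d(n,\_)$ so that exactly the \emph{top} prime $d(n,m_n)=d_2(n)$ (and no smaller one in the chain relevant at scale $n$) is $\cong 5\bmod 6$, so that the cheapest quotient separating $g_1$ from $g_2$ is precisely $\alt(d_2(n'))$; then $\conj_G(n)\succ d_2(n')!/2\simeq F_2(n)$. Engineering $d$ to simultaneously meet \ref{cond:nondecreasing}--\ref{cond:dsuflarge}, the primality and $\bmod\,6$ constraints, and this "only the top of the chain is $5\bmod 6$" requirement — while keeping $d(n,1)$ pinned to $F_1$ and $d(n,m_n)$ pinned to $F_2$ — is the main obstacle; it should be handled by a Dirichlet/Bertrand-type density argument choosing each prime in the chain greedily in the right residue class, absorbing all errors into the $\simeq$ using \ref{cond:localgrowth1}.
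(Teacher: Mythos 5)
Your plan follows essentially the same route as the paper: pin $d(n,1)$ to a prime-valued $d_1\asymp f_1$ and $d(n,m_n)$ to a prime-valued $d_2\asymp f_2$, connect them by a geometrically growing chain of primes via a Bertrand-on-arithmetic-progressions argument, keep every prime in the chain $\equiv 1 \pmod 6$ except the top one which is $\equiv 5\pmod 6$, and then use \cref{prop:AltIsDInvariant} and \cref{prop:ConjIsLocal2} for the upper bounds, Bradford-style simplicity for the $\rf$ lower bound, and \cref{prop:g1g2Conjugate} with the congruence dichotomy for the $\conj$ lower bound. The only detail you leave implicit is how to make rigorous that the cheapest separating quotient for $g_1,g_2$ must be injective on $\alt(d_2(n))$: the paper does this by explicitly forming the modified parameter functions $\tilde d,\tilde m_*$ (truncating the chain at level $n$), exhibiting the resulting quotient $\pi\colon\G\to G(\tilde d,\tilde m_*,r)$ with kernel $\alt(d(n,m_n))$, showing $\pi(g_1)\sim\pi(g_2)$ there via \cref{prop:ConjIsLocal2}, and concluding that any quotient separating $g_1,g_2$ cannot factor through $\pi$ — a step worth spelling out, but one your plan clearly anticipates.
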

			For this we will use the groups constructed above, the behaviour of $\rf_G$ will be dictated by $d(n,1)$ and the behaviour of $\conj$ by $d(n,m_n)$.

			Let $F_1$ and $F_2$ be as above. Let $C_2$ be the constant from \cref{prop:RExistenceDoubleIndexed} for $\frac{\epsilon}{2}$ and $C_0=2$. By \cite[Lemma 2.3.]{Bradford}, there exists some constant $C_3$ such that 
			$$
			f_1(n)=\left\lceil\frac{\log F_1(n+C_3)}{\log\log F_1(n+C_3)}\right\rceil
			$$
			satisfies $$
			f_1(n)\geq \max(C_2n\log(n)\log\log(n)^{1+\frac{\epsilon}{2}},C_2)
			$$
			Similarly define $f_2=\left\lceil\frac{\log F_2(n+C_3)}{\log\log F_2(n+C_3)}\right\rceil$.
			We have $f_2(n)\geq f_1(n)$.
			We want to approximate $f_2(n)$ and $f_1(n)$ by sequences of prime numbers. For this we use a generalisation of Bertrand's postulate
			\begin{proposition}[Bertrand's postulate on arithmetic progressions]\label{prop:bertrand}
			There exists some constant $C_6$ such that for any $n\in \NN$ and for $i=\pm 1$, there exists some prime number $p$ such that $p\cong i\mod 6$ and $n<p<C_6n$.\footnote[2]{More in general, we may replace $i$ and $6$ by any pair of coprime integers.}
			\end{proposition}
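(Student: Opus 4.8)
The plan is to deduce this from the distribution of primes in arithmetic progressions. Write $\pi(x;6,i)$ for the number of primes $p\le x$ with $p\equiv i\pmod 6$; since $1$ and $5\equiv -1$ are precisely the reduced residues modulo $6$, every prime exceeding $3$ lies in exactly one of these two classes. By the prime number theorem for arithmetic progressions (applied to the fixed modulus $6$), one has, for each fixed $i\in\{1,-1\}$,
$$
\pi(x;6,i)\sim\frac{x}{\varphi(6)\log x}=\frac{x}{2\log x}\qquad(x\to\infty).
$$

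First I would fix a constant, say $C_6=4$, and establish the conclusion for all sufficiently large $n$. For both choices of $i$ the ratio
$$
\frac{\pi(C_6 n;6,i)}{\pi(n;6,i)}\longrightarrow C_6>1,
$$
so $\pi(C_6 n;6,i)-\pi(n;6,i)\to\infty$; since there are only two residue classes to consider, there is a single threshold $N_0$ such that for every $n\ge N_0$ and each $i\in\{1,-1\}$ this difference is at least $1$, i.e.\ the open interval $(n,C_6 n)$ contains a prime congruent to $i$ modulo $6$.

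It then remains to absorb the finitely many $n<N_0$. By Dirichlet's theorem each reduced residue class modulo $6$ contains infinitely many primes, so for every $n<N_0$ and each $i$ there is a least prime $p_{n,i}\equiv i\pmod 6$ with $p_{n,i}>n$. Choosing $C_6$ strictly larger than $4$ and strictly larger than $p_{n,i}/n$ for each of the finitely many pairs $(n,i)$ with $n<N_0$ makes the statement valid for every $n\in\NN$. The generalisation recorded in the footnote requires no new idea: replace $6$ by an arbitrary modulus $q$, $i$ by any residue coprime to $q$, and $\varphi(6)$ by $\varphi(q)$, and repeat verbatim.

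There is no genuine obstacle here: the statement is a soft consequence of the prime number theorem for arithmetic progressions, and the only points needing (routine) care are the uniformity over the two residue classes and the patching of the bounded range $n<N_0$. If one prefers an explicit value of $C_6$ rather than an existence statement, one can instead invoke the effective bounds of Ramar\'e and Rumely on $\psi(x;q,a)$ for small moduli, again after checking a finite initial segment of $n$ by direct inspection.
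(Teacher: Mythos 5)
Your argument is correct. The paper itself gives no proof at all for this proposition: it simply cites Moree's 1993 paper ``Bertrand's postulate for primes in arithmetical progressions,'' which establishes the result (with explicit constants) for general moduli. Your route is therefore genuinely different in character: you derive the statement as a soft consequence of the prime number theorem for arithmetic progressions, obtaining the asymptotic $\pi(x;6,i)\sim x/(2\log x)$, observing that with $C_6=4$ the count of primes $\equiv i\pmod 6$ in $(n,4n]$ tends to infinity for both residue classes simultaneously, and then absorbing the finitely many $n$ below the resulting threshold by enlarging $C_6$ using Dirichlet's theorem. This is self-contained modulo standard analytic number theory and makes the mechanism transparent, at the cost of yielding no explicit value of $C_6$; the paper's citation buys the reverse trade-off, handing over an effective constant with no work. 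One nitpick: since $\pi(C_6 n;6,i)-\pi(n;6,i)$ counts primes in the half-open interval $(n,C_6 n]$ rather than the open interval $(n,C_6 n)$ required by the statement, you should either note that the difference is eventually $\geq 2$ (which your ``$\to\infty$'' already gives) or bump $C_6$ slightly; either fix is immediate. Your closing remark about Ramar\'e--Rumely is exactly the kind of effective substitute the paper's citation to Moree supplies.
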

			\begin{proof}
				See for instance \cite{moree1993bertrands}.
			\end{proof}
			
			Using the above, we can now find functions $d_1,d_2:\NN\rightarrow\NN_{>5}$ both consisting of odd prime numbers such that:\begin{itemize}
				\item $f_1(n)\leq d_1(n)\leq C_6f_1(n)$ and $d_1\cong 1\mod 6$;\\
				\item $f_2(n),2d_1(n)\leq d_2(n)\leq 2C_6^2f_2(n)$ and $d_2\cong 5\mod 6$,\\
			\end{itemize}
			where $C_6$ is as in \cref{prop:bertrand}
			We now construct functions $d:\NN\times\NN\rightarrow \NN$ and $m_*:\NN\rightarrow\NN$ satisfying the conditions of \cref{prop:RExistenceDoubleIndexed} for $\frac{\epsilon}{2}$, $C_0=2, C_1=C_6$, and $C_2$ as before, such that 
			\begin{enumerate}
				\item $d(n,1)=d_1(n)$;\\
				\item $d(n,m_n)=d_2(n)$;\\
				\item $d(n,m)\cong 1\mod 6$ whenever $m<m_n$.\\
			\end{enumerate}
			
			We construct $d(n,m)$ by induction on $m$.
			For the base case we have $d(n,1)=d_1(n)$ with $d_2(n)\geq 2d(n,1)$. Now given $d(n,m-1)$ where $2d(n,m-1)\leq d_2(n)$ we define $d(n,m)$ as follows: If $d_2(n)\leq 4C_6d(n,m-1)$ then $d(n,m)=d_2(n)$, in this case we also let $m=m_n$ and $d(n,i)=d_2(n)$ for all $i\geq m$. On the other hand, if $4C_6d(n,m-1)<d_2(n)$, take then $d(n,m)$ the least prime $p$ such that $p\cong 1\mod 6$ and such that $p\geq 2d(n,m-1)$. By the generalised Bertrand's postulate there exists such a prime in the interval $[2d(n,m-1),2C_6d(n,m-1)]$, in this case, we still have that $2d(n,m)\leq d_2(n)$.
			
			Notice that if we only encounter the second case, $d(n,\_)$ increases exponentially. As $d_2(n)$ is finite, this process must eventually stabilise.
			
			Let $d,m_*$ be as constructed above and let $r$ then be as in \cref{prop:RExistenceDoubleIndexed}. We will demonstrate that these groups $\G$ satisfy \cref{prop:conjGap}.
			Using \cref{prop:inverseFactorialAproximation}, it suffices to show $\rf_{\G}(n)\simeq \frac{d_1(n)!}{2}$, and $\conj_{\G}(n)\simeq \frac{d_2(n)!}{2}.$
			The remainder of the proof of \cref{prop:conjGap} can thus be split into the following $4$ lemmas.

			\begin{lemma}
				$\mathrm{Rf}_{\G}(n)\prec \frac{d_1(n)!}{2}$.
			\end{lemma}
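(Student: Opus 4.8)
The goal is an upper bound $\mathrm{Rf}_{\G}(n) \prec d_1(n)!/2$, so I need to show that for every non-trivial $g \in \G$ with $\norm{g}_S \leq n$ there is a finite quotient of size at most (a bounded rescaling of) $d_1(n)!/2$ separating $g$ from the identity. The natural candidate quotients are the projections $\pi_{n',1}$ onto the \emph{first-column} alternating groups $\alt(d(n',1)) = \alt(d_1(n'))$, together with finite quotients of $\ZZ_3 \wr \ZZ$ via $\pi_\infty$; these are exactly the quotients whose orders are controlled by $d_1$ (since $d(\_,1)$ is non-decreasing by \ref{cond:nondecreasing}, $\abs{\alt(d(n',1))} \le d_1(n')!/2$ for $n' \le$ some linear function of $n$). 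So the plan is: given $g = w(\alpha_*,\beta_*)$ with $\norm{w} \le n$, split into two cases according to whether $\pi_\infty(g)$ is trivial.

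First I would handle the case $\pi_\infty(g) \neq e$. Since $\ZZ_3\wr\ZZ$ is residually finite with a known (subexponential, in fact polynomial-times-exponential) residual finiteness growth — the bound from \cite{Ferov} already invoked in the proof of \cref{prop:exactupperbound} — there is a finite quotient of $\ZZ_3\wr\ZZ$ of size at most $\exp(c_4 n)$ in which the image of $g$ is non-trivial, and composing with $\pi_\infty$ gives a finite quotient of $\G$ of that size. For $n$ large this is $\le d_1(n)!/2$ (using condition \ref{cond:dsuflarge}, which forces $d_1(n) \gtrsim n\log n (\log\log n)^{1+\epsilon/2}$, so $d_1(n)!$ grows much faster than $\exp(c_4 n)$), and enlarging the multiplicative constant absorbs the finitely many remaining $n$.

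Next, the main case $\pi_\infty(g) = e$. Here \cref{prop:WordproblemIsLocal} (applied with the pair $(\alpha_{d(n',m)},\beta_{r(n'),d(n',m)})$, valid because \ref{cond:RAproxN} gives $r(n') \ge n' \ge 2n+1$ and $d(n',m) - 2r(n') \ge 2n+1$ once $n'$ is large enough relative to $n$) shows that $\pi_{n',m}(g) = e$ for all $n' \ge$ some linear bound $\lambda n$ and all $m$. Since $g \neq e$ but $\pi_\infty(g) = e$, there must be some index $(n_0, m_0)$ with $n_0 \le \lambda n$ and $\pi_{n_0,m_0}(g) \neq e$; by \cref{prop:AltIsDInvariant} (again applicable since $d(n_0,m_0) - 2r(n_0) \ge 2n+1$ for the relevant range, using \ref{cond:expGrowth} and \ref{cond:dsuflarge}), non-triviality of $\pi_{n_0,m_0}(g)$ in $\alt(d(n_0,m_0))$ is equivalent to non-triviality of $\pi_{n_0,1}(g)$ in $\alt(d(n_0,1)) = \alt(d_1(n_0))$. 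Thus $\pi_{n_0,1}$ is a finite quotient of $\G$ — legitimate because $\alt(d(n_0,1)) \le \G$ by \cref{prop:AltAreSubgroups}, so it is genuinely a quotient of $\G$, not merely of $\Gn$ — of size $\abs{\alt(d_1(n_0))} = d_1(n_0)!/2 \le d_1(\lambda n)!/2$, and it separates $g$ from $e$. Hence $\mathrm{Rf}_{\G}(g) \le d_1(\lambda n)!/2$, giving $\mathrm{Rf}_{\G}(n) \prec d_1(n)!/2$ after folding $\lambda$ into the $\prec$-constant.

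The main obstacle — and the reason \cref{prop:AltIsDInvariant} is needed — is that a priori non-triviality could be "witnessed" only in a late column $\alt(d(n_0,m_0))$ with $m_0$ large, whose order $d(n_0,m_0)!/2$ could be as big as $d_2(n_0)!/2$ and thus governed by $F_2$, not $F_1$. The content of \cref{prop:AltIsDInvariant} is precisely that this cannot happen: a short word that is non-trivial in some column of the $n_0$-block is already non-trivial in the first (smallest) column. Everything else is bookkeeping with the linear constants coming from \cref{prop:WordproblemIsLocal} and the growth conditions on $d_1$. I would also remark that the lower bound $\mathrm{Rf}_{\G}(n) \succ d_1(n)!/2$, needed separately, follows from exhibiting explicit elements (e.g. powers of a generator of $\alpha_{d(n_0,1)} \in \alt(d_1(n_0))$, which lies in $\G$) that require the full group $\alt(d_1(n_0))$ to detect, mirroring the argument in \cref{sec:bradconstruct}; but that is the subject of a later lemma.
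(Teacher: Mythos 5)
Your overall strategy is close to the paper's: reduce to a finite alternating quotient $\alt(d(n_0,m))$ with $n_0$ linearly bounded, and use \cref{prop:AltIsDInvariant} to ensure that the witnessing column has small index. But there is a genuine gap in Case~B (where $\pi_\infty(g)=e$). You assert that \cref{prop:AltIsDInvariant} makes non-triviality of $\pi_{n_0,m_0}(g)$ equivalent to non-triviality of $\pi_{n_0,1}(g)$, and you only check the hypothesis $d(n_0,m_0)-2r(n_0)\ge 2n+2$ on the $m_0$ side. The lemma also requires $d(n_0,1)-2r(n_0)\ge 2\norm{w}+2$, and this can fail: condition \ref{cond:dsuflarge} bounds $d(n_0,1)$ from below only in terms of $n_0$, and $n_0$ can be as small as $1$ while $\norm{w}$ is large. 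Concretely, with $n_0$ small, $d(n_0,1)$ is just a fixed constant, and the word $w$ can ``wrap around'' $\alt(d(n_0,1))$ and project to the identity there while remaining non-trivial in some later column $\alt(d(n_0,m_0))$, whose order is governed by $d_2$ rather than $d_1$. So the reduction to column $1$ does not always go through, and the bound you extract in that case is controlled by $F_2$, not $F_1$.

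The paper avoids this by splitting on the size of $d(n_0,1)$. If $2\norm{w}+2\le d(n_0,1)/3$ then the missing hypothesis of \cref{prop:AltIsDInvariant} does hold (since $d(n_0,1)-2r(n_0)>\frac{2}{3}d(n_0,1)$), and one reduces to column $1$ exactly as you suggest. Otherwise $d(n_0,1)<6\norm{w}+6$, which by \ref{cond:RAproxN} forces $r(n_0)<\norm{w}+1$; one then either observes that all the $d(n_0,m)$ are below $6\norm{w}+6$, or locates an intermediate column $m_0'$ with $d(n_0,m_0')\in[6\norm{w}+6,\,2C_6(6\norm{w}+6)]$ (using \ref{cond:expGrowth}) and applies \cref{prop:AltIsDInvariant} between the witnessing column and $m_0'$ — both columns are now large enough relative to $2r(n_0)+2\norm{w}+2$ because $r(n_0)$ is small. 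Either way the separating quotient has order at most $d_1(O(\norm{w}))!/2$. To repair your proof you would need to add this second case; the reduction ``always go to column $1$'' is false as stated. (Two smaller remarks: in Case~A you invoke the conjugacy separability bound of \cite{Ferov}, which is fine since $\rf\le\conj$, but the paper avoids the $\pi_\infty$ branch altogether because \cref{prop:WordproblemIsLocal} already transfers non-triviality of $\pi_\infty(g)$ into $\alt(d(n,1))$ for $n=2\norm{w}+1$; and your justification that $\pi_{n_0,1}$ is a genuine quotient of $\G$ should cite surjectivity of the restricted projection, which follows from Lewin's generation result, rather than from $\alt(d(n_0,1))$ being a \emph{subgroup} of $\G$.)
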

			\begin{proof}
				Let $w\in F_{a,b}$ be a freely reduced word such that $w(\alpha_*,\beta_*)$ is non trivial in $\G$. By \cref{prop:WordproblemIsLocal} we have that there exists some $n\leq2\norm{w}+1$ and some $m$ such that $w(\alpha_{d(n,m)},\beta_{r(n),d(n,m)})$ is non trivial.
				
				First suppose that $2\norm{w}+2\leq \frac{d(n,1)}{3}$. In this case, using \cref{prop:AltIsDInvariant}, we have that $w(\alpha_{d(n,1)},\beta_{r(n),d(n,m)})$ is non trivial. As $\alt(d(n,1))$ has order $d_1(n)!/2$, we obtain in this case that $\mathrm{Rf}_{\G}(w(\alpha_*,\beta_*))\leq d_1(2\norm{w}+1)!$.
				On the other hands suppose that $6\norm{w}+6> d(n,1)$. Either $d(n,m)<6\norm{w}+6$ for all $m$, or there exists some $m_0$ such that $d(n,m_0)\in [6\norm{w}+6,2C_6(6\norm{w}+6)]$. Again using \cref{prop:AltIsDInvariant}, we have that $w(\alpha_{d(n,m)},\beta_{r(n),d(n,m)})$ is non-trivial for some $m\leq m_0$.
				In this case we obtain that $\mathrm{Rf}_{\G}(w(\alpha_*,\beta_*))\leq (d(n,m_0))!/2$. Which can be bounded above by $\frac{d_1(2C_6(6\norm{w}+6))!}{2}$
			\end{proof}
			\begin{lemma}
				$\mathrm{Rf}_{\G}(n)\succ \frac{d_1(n)!}{2}$.
			\end{lemma}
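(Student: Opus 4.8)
The plan is to run the argument dual to the preceding lemma: rather than bounding the quotient needed for an arbitrary nontrivial element, I would produce, for each $m$, one explicit element $g_m\in\G$ with $\norm{g_m}_S=O(m)$ whose residual depth is at least $\abs{\alt(d(m,1))}=d_1(m)!/2$, and then read off the growth estimate.

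Concretely I would take the element already analysed inside the proof of \cref{prop:AltAreSubgroups}: put $v_m=[a^{r(m)}b\inv a^{-r(m)},\,b\inv]\in F_{a,b}$ and $g_m=v_m(\alpha_*,\beta_*)$. The computations recorded there --- which rest on \cref{prop:ababVanishes} together with the non-congruence condition \ref{cond:RNonCongruent} --- give $\pi_\infty(g_m)=e$, $\pi_{n,j}(g_m)=e$ for every $n\neq m$ and every $j$, and $\pi_{m,1}(g_m)\neq e$. Hence $g_m$ is a nontrivial element supported in $\prod_{j=1}^{m_m}\alt(d(m,j))$, which by \cref{prop:AltAreSubgroups} is a subgroup of $\G$; and expanding the commutator and using $r(m)<37m$ from \ref{cond:RAproxN} gives $\norm{g_m}_S\le 4r(m)+4\le 152m$.

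It then remains to bound $\rf_{\G}(g_m)$ from below. Let $N\normal\G$ with $g_m\notin N$. For each $j\le m_m$ the subgroup $N\cap\alt(d(m,j))$ is normal in the simple group $\alt(d(m,j))$, hence is trivial or everything. If it were trivial for some $j$, then $\alt(d(m,j))$ would embed into $\G/N$, forcing $[\G:N]\ge d(m,j)!/2\ge d(m,1)!/2=d_1(m)!/2$ by the monotonicity $d(m,1)\le d(m,j)$ coming from \ref{cond:expGrowth}. Otherwise $\alt(d(m,j))\le N$ for every $j\le m_m$, whence $g_m\in\prod_{j=1}^{m_m}\alt(d(m,j))\le N$, contradicting $g_m\notin N$. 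Either way $[\G:N]\ge d_1(m)!/2$, so $\rf_{\G}(g_m)\ge d_1(m)!/2$; by definition of $\rf_{\G}$ as a maximum over elements of norm at most $n$ and $\norm{g_m}_S\le 152m$, this gives $d_1(m)!/2\le\rf_{\G}(152m)$ for all $m$, and since $n\mapsto d_1(n)!/2$ is non-decreasing by \ref{cond:nondecreasing} this is exactly the claim $\frac{d_1(n)!}{2}\prec\rf_{\G}(n)$. I expect the only delicate point to be the observation that $g_m$ is not supported in a single alternating factor but spread over the whole block $\alt(d(m,1)),\dots,\alt(d(m,m_m))$, so a small quotient has to collapse all of these factors simultaneously --- which is precisely what $d(m,1)\le d(m,j)$ makes possible.
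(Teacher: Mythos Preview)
Your argument is correct and is essentially the same as the paper's: both take the commutator $g_m=[\alpha_*^{r(m)}\beta_*^{-1}\alpha_*^{-r(m)},\beta_*^{-1}]$, observe via \cref{prop:ababVanishes} that it lies in $\prod_{j=1}^{m_m}\alt(d(m,j))$ and is nontrivial, and then use simplicity of the alternating factors to force any separating quotient to have order at least $d(m,1)!/2=d_1(m)!/2$. The only cosmetic differences are that the paper phrases the last step in terms of a quotient map $\varphi$ rather than a normal subgroup $N$, and records the word-length bound as $4r(n)+4\le 148n+4$ rather than your $152m$.
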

			\begin{proof}
				The proof is analogous to \cite[Proposition 3.4.]{Bradford}.
				We have that $g_n=[\alpha^{r(n)}\beta\inv\alpha^{-r(n)},\beta\inv]$ is an element of $$
				\prod_{i=1}^{m_n}\alt(d(n,i))
				$$
				Let $\varphi:\G\rightarrow Q$ be a finite quotient, such that $\varphi(g)\neq 1$. As $\alt(d(n,i))$ is simple, we have that for any $i$, either $\varphi(\alt(d(n,i)))$ is trivial or that $\varphi$ is injective on $\alt(d(n,i))$. If the first is true for all $i\in[1,m_n]$, then $$\varphi(\prod_{i=1}^{m_n}\alt(d(n,i)))$$ is trivial and thus also $\varphi(g)$ is trivial. There thus exists some $i$ such that $\varphi$ is injective on $\alt(d(n,i))$ and thus is $\abs{Q}\geq\frac{d(n,i)!}{2}\geq\frac{d(n,1)!}{2}$. As $\norm{g_n}$ is at most $4r(n)+4\leq 148n+4$, we have that $\mathrm{Rf}_G^S(148n+4)\geq \frac{d(n,1)!}{2}$ as had to be shown.
			\end{proof}
			\begin{lemma}
				$\conj_{\G}(n)\succ \frac{d_2(n)!}{2}$
			\end{lemma}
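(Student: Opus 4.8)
The plan is to pin down, for each $n$, a pair of elements of $\G$ of word norm $O(n)$ that are non-conjugate in $\G$ but whose images agree (or at least are conjugate) under every coordinate projection except $\pi_{n,m_n}$; then any finite quotient of $\G$ separating their conjugacy classes must be faithful on the simple direct factor $\alt(d(n,m_n))=\alt(d_2(n))$, forcing its order to be at least $d_2(n)!/2$. Concretely, fix $r=r(n)$, let $g_1,g_2\in F_{a,b}$ be the two words of \cref{prop:g1g2Conjugate} formed with this $r$, and set $x_i=g_i(\alpha_*,\beta_*)\in\G$. Since $r(n)<37n$ by \ref{cond:RAproxN}, each $g_i$ has length $4r(n)+O(1)$, so $\norm{x_i}_S\le cn$ for a fixed constant $c$.

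First I would record the relevant projections. Under $\pi_\infty$ the inner commutator $[\alpha_\infty^{r}\beta_\infty^{-1}\alpha_\infty^{-r},\beta_\infty^{-1}]$ is trivial — both factors lie in the abelian base group of $\ZZ_3\wr\ZZ$ and $r\neq0$ — so, using $\beta_\infty^3=e$, both $x_i$ map to $\alpha_\infty^3\beta_\infty$; hence $\pi_\infty(x_1)=\pi_\infty(x_2)$. For $n'\neq n$ and any $m$ one has $\pi_{n',m}(x_i)=g_i(\alpha_{d(n',m)},\beta_{r(n'),d(n',m)})$, and here the inner commutator again vanishes: it is a commutator of the $3$-cycles $\beta_{r(n'),d(n',m)}$ and $\alpha^{r(n)}\beta_{r(n'),d(n',m)}\alpha^{-r(n)}$, whose supports are $\{0,r(n'),2r(n')\}$ and $\{r(n),r(n)+r(n'),r(n)+2r(n')\}$ modulo $d(n',m)$; by \ref{cond:RNonCongruent} (as $n\neq n'$) these two sets are either disjoint or equal, so the cycles commute (this is the two-index version of \cref{prop:ababVanishes}). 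Thus $\pi_{n',m}(x_1)=\alpha^3\beta_{r(n'),d(n',m)}^{-2}=\alpha^3\beta_{r(n'),d(n',m)}=\pi_{n',m}(x_2)$. Finally, for the blocks with first index $n$: if $m<m_n$ then $d(n,m)\equiv1\bmod6$ by construction, while $r(n)\equiv1\bmod6$ by \ref{cond:RCong1}, $r(n)>3$, and $d(n,m)>6r(n)>4r(n)$ by \ref{cond:RAproxN}, so \cref{prop:g1g2Conjugate} gives $\pi_{n,m}(x_1)\sim\pi_{n,m}(x_2)$ in $\alt(d(n,m))$; whereas for $m=m_n$ we have $d(n,m_n)=d_2(n)\equiv5\bmod6$, so the second alternative of \cref{prop:g1g2Conjugate} gives $\pi_{n,m_n}(x_1)\not\sim\pi_{n,m_n}(x_2)$. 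In particular $x_1\not\sim x_2$ already in this one quotient.

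It then remains to bound the index of any separating normal subgroup. Let $N\normal\G$ with $x_1N\not\sim x_2N$ in $\G/N$. The subgroup $A:=\alt(d(n,m_n))\cong\alt(d_2(n))$ is contained in $\G$ by \cref{prop:AltAreSubgroups}, is a direct factor of $G_0(d,m_*)$ hence normal in $\G$, and is simple (as $d_2(n)\ge5$), so $N\cap A$ is $\{1\}$ or $A$. Suppose $A\subseteq N$. Using the conjugacies from \cref{prop:g1g2Conjugate} choose, for each $m<m_n$, an element $h_m\in\alt(d(n,m))\subseteq\G$ with $h_m\pi_{n,m}(x_1)h_m^{-1}=\pi_{n,m}(x_2)$, and put $h=h_1\cdots h_{m_n-1}\in\G$. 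Then $hx_1h^{-1}$ and $x_2$ have equal image under $\pi_\infty$ and under $\pi_{n',m}$ for every $(n',m)\neq(n,m_n)$ — by the three cases above, together with the fact that $h$ is supported on the blocks $(n,m)$ with $m<m_n$ — so $hx_1h^{-1}x_2^{-1}$ lies in the direct factor $A$, hence in $N$; this makes $x_1N$ and $x_2N$ conjugate, a contradiction. Therefore $N\cap A=\{1\}$, the composite $A\hookrightarrow\G\to\G/N$ is injective, and $[\G:N]\ge|A|=d_2(n)!/2$. Minimising over $N$ gives $\conj_{\G}(x_1,x_2)\ge d_2(n)!/2$, and since $\norm{x_i}_S\le cn$ this yields $\conj_{\G}(cn)\ge d_2(n)!/2$, i.e.\ $\conj_{\G}(n)\succ d_2(n)!/2$. (The cases $n$ small, where possibly $r(n)\le3$, contribute only a bounded amount and are absorbed into the constant.)

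I expect the genuine difficulty to be concentrated in \cref{prop:g1g2Conjugate} itself — that this particular pair of words is conjugate in $\alt(d)$ exactly when $d\equiv1\bmod6$ — but that computation is deferred to the appendix. Among the steps carried out here, the point that needs most care is the partial-conjugator argument in the case $A\subseteq N$: one must check that a single element of $\G$, assembled from the finite alternating subgroups $\alt(d(n,m))$ with $m<m_n$, simultaneously corrects $x_1$ into agreement with $x_2$ in every coordinate other than $(n,m_n)$. This uses in an essential way that these alternating groups really do sit inside $\G$ (\cref{prop:AltAreSubgroups}) and the vanishing of the inner commutator off the $n$-th block (via \ref{cond:RNonCongruent}).
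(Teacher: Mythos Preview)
Your proof is correct and follows the paper's strategy: the same pair $g_1,g_2$ from \cref{prop:g1g2Conjugate}, the same coordinate-by-coordinate analysis (equality under $\pi_\infty$ and $\pi_{n',m}$ for $n'\neq n$, conjugacy under $\pi_{n,m}$ for $m<m_n$, non-conjugacy at $(n,m_n)$), and the same simplicity argument forcing any separating quotient to embed $\alt(d_2(n))$. The one difference is packaging: where you directly assemble the conjugator $h=h_1\cdots h_{m_n-1}\in\G$ and check $hx_1h^{-1}x_2^{-1}\in A$, the paper instead forms the auxiliary group $G(\tilde d,\tilde m_*,r)$ obtained by deleting the $(n,m_n)$ factor and invokes \cref{prop:ConjIsLocal2} there to deduce conjugacy modulo $A$ --- your route is slightly more direct and avoids introducing the modified group, while the paper's route reuses the general machinery already set up.
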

			\begin{proof}
				Let $n$ be arbitrary, let $$g_1=\alpha_*^3\beta_*\inv[\alpha_*^{r(n)}\beta_*\inv \alpha_*^{-r(n)},\beta_*\inv]\beta_*\inv$$ and let $$g_2=\alpha_*^3\beta_*[\alpha_*^{r(n)}\beta_*\inv \alpha_*^{r(n)},\beta_*\inv]$$
				Notice that both of these elements have word norm at most $9+148n$.
				We will demonstrate that $$\conj(g_1,g_2)\geq d_2(n)!/2.$$
				
				First we will show that $g_1$ and $g_2$ are non-conjugate. For this it suffices to show that $g_1$ and $g_2$ are non-conjugate in some quotient. This is indeed the case as by \ref{prop:g1g2Conjugate} we have that $\pi_{n,m_n}(g_1)$ and $\pi_{n,m_n}(g_2)$ are non-conjugate. 
				
				We define $\tilde d$ and $\tilde m_*$ as follows:$$
				\tilde d(i,j)=\begin{cases}
					\begin{matrix}
						d(i,j)&\text{if }i\neq n\text{ or } j\leq m_n-1\\
						d(n,m-1)&\text{if } i=n\text{ and } j\geq m_n
					\end{matrix}
					\end{cases}
				$$
				and $$
				\tilde m_i=\begin{cases}
					\begin{matrix}
						m_i&\text{if }i\neq n\\
						m_n-1&\text{if }i=n
					\end{matrix}
				\end{cases}
				$$
				Notice that $\tilde d$ and $\tilde m_*$ satisfy the conditions \ref{cond:nondecreasing}, \ref{cond:primes}, \ref{cond:expGrowth} and \ref{cond:dsuflarge} of \cref{sec:constructingGroups}. Also notice that $\tilde d,\tilde m_*$ and $r$ also satisfy \ref{cond:RCong1},\ref{cond:RAproxN} and \ref{cond:RNonCongruent}.
				We can thus define the groups $G_0(\tilde d,\tilde m_*)$ and $G(\tilde d,\tilde m_*,r)$.
				It is clear that there exists a morphism $\pi:\Gn\rightarrow G_0(\tilde d,\tilde m_*)$ with as kernel $\alt(d(n,m))$. Furthermore, as $\pi$ maps the two generators of $\G$ to the two generators of $G(\tilde d,\tilde m_*,r)$, we have that this map restricts to an epimorphism $\pi:\G\rightarrow G(\tilde d,\tilde m_*,r)$.
				We will show that $\pi(g_1)$ and $\pi(g_2)$ are conjugate.

				We have that $\pi_{\infty}([\alpha^{r(n)}\beta\inv \alpha^{-r(n)},\beta\inv])$ vanishes, and by \ref{prop:ababVanishes} we have for $i\neq n$ that $\pi_{i,m}([\alpha^{r(n)}\beta\inv \alpha^{-r(n)},\beta\inv])$ vanishes. In particular, are $\pi_\infty(g_1)$ and $\pi_\infty(g_2)$, and are $\pi_{i,m}(g_1)$ and $\pi_{i,m}(g_2)$ equal and thus conjugate.
				Furthermore, since $d(n,m)\cong 1\mod 6$ for all $m\leq\tilde m_n$, it holds by \ref{prop:g1g2Conjugate} that $\pi_{n,m}(g_1)$ is conjugate to $\pi_{n,m}(g_1)$ for all such $m$.
				By \cref{prop:ConjIsLocal2} it follows that $\pi(g_1)$ and $\pi(g_2)$ are conjugate.
				
				Let now $\varphi:\G\rightarrow Q$ be a finite quotient such that $\varphi(g_1)$ and $\varphi(g_2)$ are non-conjugate. Let $K$ be the intersection of $\alt(d(n,m_n))$ and the kernel of $\varphi$. Suppose from contradiction that $K$ is non trivial. As $\alt(d(n,m_n))$ is simple, it follows that $K=\alt(d(n,m_n))$. This however implies that $\varphi$ factors through $\pi$. In particular, as $\pi(g_1)$ and $\pi(g_2)$ are conjugate, this would imply that $\varphi(g_1)$ and $\varphi(g_2)$ are conjugate, contradicting the construction of $\varphi$.
				
				It thus follows that $K$ is trivial. In particular is $\varphi$ injective on $\alt{d(n,m_n)}$ and thus is $\abs{Q}\geq d(n,m_n)!/2$. This holds for all such quotients $Q$ and thus must $\conj_\G(g_1,g_2)\geq d_2(n)!/2$ or thus $\conj_\G^S(9+148n)\geq d_2(n)!/2$.

			\end{proof}
			
			\begin{lemma}
				$\conj_{\G}(n)\prec d_2(n)!/2$
			\end{lemma}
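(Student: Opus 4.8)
The plan is to rerun the upper-bound argument of \cref{prop:exactupperbound}, now using \cref{prop:ConjIsLocal2} and \cref{prop:AltAreSubgroups} in place of \cref{prop:ConjIsLocal} and \cref{prop:ContainsAlt}. Given non-conjugate $g_1,g_2\in\G$ with $\norm{g_1}_S,\norm{g_2}_S\leq N$, \cref{prop:ConjIsLocal2} supplies a uniform constant $K$ such that either $\pi_\infty(g_1)$ and $\pi_\infty(g_2)$ are non-conjugate in $\ZZ_3\wr\ZZ$, or there are $n\leq KN$ and some $m$ with $\pi_{n,m}(g_1)$ and $\pi_{n,m}(g_2)$ non-conjugate in $\alt(d(n,m))$. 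In each case I would produce a finite quotient of $\G$ separating the two conjugacy classes and bound its order; the larger of the two bounds then controls $\conj_{\G}(g_1,g_2)$.

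In the first case I would use that $\pi_\infty$ maps $\G$ onto $\ZZ_3\wr\ZZ$ together with the exponential conjugacy separability growth of $\ZZ_3\wr\ZZ$ from \cite{Ferov}: there is a uniform constant $c_4$ and a finite quotient $\rho\colon\ZZ_3\wr\ZZ\to Q$ with $\abs{Q}\leq\exp(c_4N)$ keeping $\pi_\infty(g_1),\pi_\infty(g_2)$ non-conjugate, so that $\rho\comp\pi_\infty$ witnesses $\conj_{\G}(g_1,g_2)\leq\exp(c_4N)$. In the second case \cref{prop:AltAreSubgroups} shows the $(n,m)$-th direct factor $\alt(d(n,m))$ lies inside $\G$, so $\pi_{n,m}$ restricts to a surjection $\G\to\alt(d(n,m))$, a finite quotient of order $d(n,m)!/2$ in which $g_1,g_2$ are non-conjugate. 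Since $d(n,\_)$ is non-decreasing and stabilises at the value $d_2(n)=d(n,m_n)$, and $d_2(n)\leq 2C_6^2 f_2(n)$ with $f_2$ non-decreasing, one gets $d(n,m)\leq d_2(n)\leq 2C_6^2 f_2(n)\leq 2C_6^2 f_2(KN)$. Combining the cases, $\conj_{\G}^S(N)\leq\max\bigl(\exp(c_4N),\,(2C_6^2 f_2(KN))!/2\bigr)$ for all $N$.

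The remaining step is to convert the right-hand side into something of the form $d_2(cN)!/2$, and this is where the main (if mild) bookkeeping lies. Since $f_2(n)=\lceil\log F_2(n+C_3)/\log\log F_2(n+C_3)\rceil$, the function $n\mapsto 2C_6^2 f_2(n)$ is sandwiched between $\lceil\log F_2(n+C_3)/\log\log F_2(n+C_3)\rceil$ and a constant multiple of it, so \cref{prop:inverseFactorialAproximation} applied to $F_2(\_+C_3)$ (which still satisfies \ref{cond:localgrowth1}) gives $(2C_6^2 f_2(n))!/2\simeq F_2(n+C_3)\simeq F_2(n)$. Evaluating at $n=KN$ and using that $\simeq$ and $\prec$ are insensitive to rescaling the argument yields $(2C_6^2 f_2(KN))!/2\simeq F_2(KN)\prec F_2(N)$, while $\exp(c_4N)\prec F_2(N)$ by \ref{cond:globalgrowth1}. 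As $d_2$ is itself sandwiched between $f_2$ and $2C_6^2 f_2$, a second application of \cref{prop:inverseFactorialAproximation} gives $F_2(N)\simeq d_2(N)!/2$, whence $\conj_{\G}^S(N)\prec d_2(N)!/2$; the finitely many small values of $N$ cause no trouble since each $\conj_{\G}^S(N)$ is finite by the quotients just constructed. The only point that genuinely needs attention is that the multiplicative constants $2C_6^2$ and $K$ do not destroy the estimate, which works because \ref{cond:localgrowth1} lets \cref{prop:inverseFactorialAproximation} turn the factor $2C_6^2$ on $f_2$ into a bounded rescaling of $F_2$, and because $\prec$ absorbs the factor $K$ appearing in the argument.
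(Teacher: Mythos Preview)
Your argument is correct and is precisely the analogue of the upper-bound half of \cref{prop:exactupperbound} that the paper intends: split via \cref{prop:ConjIsLocal2}, use \cite{Ferov} for the $\ZZ_3\wr\ZZ$ case, bound the finite alternating quotient by $d(n,m)!/2\leq d_2(n)!/2\leq (2C_6^2 f_2(KN))!/2$, and then pass through \cref{prop:inverseFactorialAproximation} to $F_2$ and back to $d_2(\cdot)!/2$. The only remark is that invoking \cref{prop:AltAreSubgroups} to get surjectivity of $\pi_{n,m}$ is harmless but unnecessary: non-conjugacy in $\alt(d(n,m))$ already implies non-conjugacy in the (possibly smaller) image $\pi_{n,m}(\G)$, whose order is in any case at most $d(n,m)!/2$.
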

			\begin{proof}
				This is completely analogous to the proof of \cref{prop:exactupperbound}, using \cref{prop:ConjIsLocal2} in stead of \cref{prop:ConjIsLocal}.
			\end{proof}
			
			\begin{appendices}
			\section{Conjugacy of $2$ elements}\label{sec:twoElements}
			In this section we demonstrate \cref{prop:g1g2Conjugate}.
			We demonstrate that if $r\cong d\cong 1\mod 6$, then  $\alpha_d^3\beta_{r,d}\inv[\alpha_d^r\beta_{r,d}\inv \alpha_d^{-r},\beta_{r,d}\inv]\beta_{r,d}\inv$and $\alpha_d^3\beta_{r,d}[\alpha_d^r\beta_{r,d}\inv \alpha_d^{-r},\beta_{r,d}\inv]$ are conjugate, but if $r\cong 1\mod 6, d\cong 5\mod 6$, then they are not conjugate.
			
			Let $g_1=\alpha_d^3\beta_{r,d}\inv[\alpha_d^r\beta_{r,d}\inv \alpha_d^{-r},\beta_{r,d}\inv]\beta_{r,d}\inv$ and $g_2=\alpha_d^3\beta_{r,d}[\alpha_d^r\beta_{r,d}\inv \alpha_d^{-r},\beta_{r,d}\inv]$.
			
			First the case where $r\cong d\cong 1\mod 6$. We first show that in this case both elements consist of a single cycle.
			notice that when $x\neq 0,r,2r,3r$, then for $g$ one of the elements of the above form, we have that $gx=x+3$ (or the reduction thereof $\mod d$ if $x\in\{d-3,d-2,d-1\})$. Given this information, a computation shows that $g_1$ consists of the one cycle $$\begin{pmatrix}
				0,3,6,&\cdots,& 3r,\\
				2r+3,2r+6,&\cdots,& d-2,\\
				1,4,&\cdots,&r,\\
				3r+3,3r+6,&\cdots,&d-1,\\
				2,5,&\cdots,&2r,\\
				r+3,r+6,&\cdots,&d-3
			\end{pmatrix}$$
			As similar computation shows that the cycle structure of $g_2$ is given by
			$$\begin{pmatrix}
				0,3r+3,3r+6,&\cdots,&d-1,\\
				2,5,&\cdots,&2r,\\
				2r+3,2r+6,&\cdots,&d-2,\\
				1,4,&\cdots,&r,\\
				3,6,&\cdots,&3r,\\
				r+3,r+6,&\cdots,&d-3
			\end{pmatrix}$$
			We thus obtain that $g_1$ and 
			$g_2$ have the same cycle structure. We construct a conjugator $h$ between $g_1$ and $g_2$ as follows: Let $h\in \mathrm{Sym}(d)$ be the permutation such that $h(0)=0$ and such that $h\comp g_1(x)=g_2\comp h(x)$. As $g_1$ and $g_2$ only have a single cycle, (and thus in particular these cycles have the same length), this uniquely defines $h$. We have that $h$ is the following map:\begin{multicols}{3}
				$0\mapsto0$\\$3\mapsto 3r+3$\\$6\mapsto 3r+6$\\\vdots\\$3r\mapsto6r$\\$2r+3\mapsto6r+3$\\\vdots\\$d-4r-1\mapsto d-1$\\$d-4r+2\mapsto 2$\\\vdots\\$d-2r\mapsto 2r$\\$d-2r+3\mapsto2r+3$\\$d-2\mapsto 4r-2$\\$1\mapsto 4r+1$\\\vdots\\$r\mapsto 5r$\\$3r+3\mapsto 5r+3$\\\vdots\\$d-2r-2\mapsto d-2$\\$d-2r+1\mapsto 1$\\\vdots\\$d-r\mapsto r$\\$d-r+3\mapsto 3$\\\vdots\\$d-1\mapsto r-1$\\$2\mapsto r+2$\\\vdots\\$2r\mapsto 3r$\\$r+3\mapsto r+3$\\\vdots\\$d-3\mapsto d-3.$			
			\end{multicols}
			In case of dots, $3$ is added repeatedly to both the argument and function value.
			We will show that $h$ as above is an even permutation, and thus that $h\in \alt(d)$. For this we want to compute the parity of the number of inversions. That is the number of pairs $x,y$ such that $x\prec y$ but $h(x)\succ h(y)$ for some order $\prec$.
			As order we pick the order in which elements appear in the cycle of $g_1$, where we begin with $0$. That is $$0\prec 3\prec 6\prec\cdots\prec d-2\prec 1\prec\cdots\prec d-3.$$
			To determine the number of inversions, we subdivide into blocks as follows:$$
			\begin{matrix}
				\{0\}&(0)\\
				\{3,6,\cdots 3r\}&(1)\\
				\{2r+3,2r+6,\cdots,d-4r-1,d-4r+2,\cdots,d-2r\}&(2)\\
				\{d-2r+3\cdots d-2,1,\cdots,r\}&(3)\\
				\{3r+3,\cdots d-r\}&(4)\\
				\{d-r+3,\cdots,d-1,2,\cdots,2r\}&(5)\\
				\{r+3,\cdots,d-3\}&(6)
			\end{matrix}$$
			These blocks each consist out a sequence of consecutive elements, and $h$ maps them again to a sequence of consecutive elements while preserving the order within blocks.
			As $h$ preserves the order within blocks, we have that if $h$ inverts $x$ and $y$, then $x$ and $y$ must lie in distinct blocks.
			Furthermore, as blocks get mapped to consecutive elements, we have that if $h$ inverts $x$ and $y$, and if $x'$ and $y'$ lie in the same block as $x$ and $y$ respectively, then also $x',y'$ is an inversion.
			
			Notice that all blocks above have odd length, so if two blocks invert, then there must be an odd number of inversions between those blocks.			
			As the number of inversions between the blocks is $8$, an even number, we also have that the number of inversions of $h$ must be even, or thus that $h$ is an even permutation and an element of $\alt(d)$.
			
			As $hg_1h\inv=g_2$ we have in this case that $g_1$ and $g_2$ are conjugate in $\alt(d)$.
			
			Now consider the case where	$r\cong 1\mod 6, d\cong 5\mod 6$. We show that in this case $g_1$ and $g_2$ have distinct cycle structure and thus that they are distinct.
			We compute the cycle structure of $g_1$ to be $$\begin{pmatrix}
				0,3,&\cdots,&3r,\\
				2r+3,&\cdots,&d-3
			\end{pmatrix}
			\begin{pmatrix}
				1,4,&\cdots,&, r,\\
				3r+3,&\cdots,&d-2
			\end{pmatrix}
			\begin{pmatrix}
				2,5,&\cdots&,2r,\\
				r+3,&\cdots,&d-1
			\end{pmatrix}
			$$
			and the cycle structure of $g_2$ to be $$\begin{pmatrix}
				0,3r+3,3r+6,&\cdots,&d-2,\\
				1,4,&\cdots,&r,\\
				3,6,&\cdots,&3r,\\
				r+3,r+6,&\cdots,&d-1,\\
				2,5,&\cdots,&2r,\\
				2r+3,&\cdots,&d-3
			\end{pmatrix}
			$$
			The number of cycles is distinct and thus are $g_1$ and $g_2$ non-conjugate.
			
			\end{appendices}
		\bibliographystyle{plain}
		\bibliography{bibliography}
	
\end{document}